\definecolor{Green2}{HTML}{00D000}
\definecolor{Green3}{HTML}{00B000}
\definecolor{Green4}{HTML}{009000}
\title{Crosscap numbers of alternating knots via unknotting splices}
\author{Thomas Kindred}
\address{Department of Mathematics, University of Nebraska, Lincoln, Nebraska 68588-0130, USA} 
\email{thomas.kindred@unl.edu}
\urladdr{www.thomaskindred.com}
\keywords{knot, link, state, spanning surface, essential, alternating, checkerboard, splice, unknotting, Gauss code, DT code, flype, crosscap number}
\newtheorem{theorem}{Theorem}[section]
\newtheorem{lemma}[theorem]{Lemma}
\newtheorem{cor}[theorem]{Corollary}
\newtheorem{prop}[theorem]{Proposition}
\newtheorem{obs}[theorem]{Observation}
\newtheorem{question}[theorem]{Question}
\begin{document}

\begin{abstract}
Ito-Takimura recently defined a splice-unknotting number $u^-(D)$ for knot diagrams. They proved that this number provides an upper bound for the crosscap number of any prime knot, asking whether equality holds in the alternating case.  We answer their question in the affirmative. (Ito has independently proven the same result.) As an application, we compute the crosscap numbers of all prime alternating knots through at least 13 crossings, using Gauss codes.
\end{abstract}

\maketitle

\section{Introduction}\label{S:Intro}

Let $K\subset S^3$ be a knot. An embedded, compact, connected surface $F\subset S^3$ is said to {\it span} $K$ if $\partial F=K$.  The {\it crosscap number} of $K$, denoted $cc(K)$, is the smallest value of $\beta_1(F)$ among all 1-sided spanning surfaces for $K$.\footnote{Since $S^3$ is orientable, a spanning surface is 1-sided if and only if it contains a mobius band.}%
\footnote{
$\beta_1(F)=\text{rank}(H_1(F))=1-\chi(F)$ 
counts how many holes are in $F$.}

A theorem of Adams and the author \cite{ak13} states that, given an alternating diagram $D$ of a knot $K$, the crosscap number of $K$ is realized by some state surface from $D$. (Section \ref{S:Back} reviews background.) Moreover, given such $D$ and $K$, an algorithm in \cite{ak13} finds a 1-sided state surface $F$ from $D$ with $\beta_1(F)=cc(K)$.

Ito-Takimura recently introduced a splice-unknotting number $u^-(D)$ for knot diagrams.  Minimizing this number across all diagrams of a given knot $K$ defines a knot invariant, $u^-(K)$. After proving that $u^-(D)\geq cc(K)$ holds for any diagram $D$ of any nontrivial knot $K$, Ito-Takimura ask whether this inequality is ever strict in the case of prime alternating diagrams. The main theorem of this paper answers their question in the negative, and states that $u^-(D)$ is minimal among all diagrams of $K$:

\begin{theorem}\label{T:main}
If $D$ is an alternating diagram of a prime knot $K$, then
\[u^-(D)=u^-(K)=cc(K).\]
\end{theorem}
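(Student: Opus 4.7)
The plan is to reduce the three-way equality to a single new inequality. Since $u^-(K) = \min_{D'} u^-(D')$ over all diagrams of $K$, one has $u^-(D) \geq u^-(K)$ automatically, and Ito-Takimura's already-established $u^-(D') \geq cc(K)$ for every diagram $D'$ of a nontrivial $K$ gives $u^-(K) \geq cc(K)$. So the only thing left to prove is $u^-(D) \leq cc(K)$, which I will attempt by constructing an explicit sequence of $cc(K)$ valid $-$ splices that transforms $D$ into an unknot diagram.

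The starting point is the Adams-Kindred theorem, quoted in the introduction, which produces a state surface $F$ from the alternating diagram $D$ with $F$ one-sided and $\beta_1(F) = cc(K)$. The associated state $s$ records a resolution at each crossing of $D$, and $F$ is built by shading certain faces of the resolved projection and reattaching twisted bands at the original crossings. My aim is to read the data of $s$ as a blueprint for splicing: to identify a distinguished collection of $\beta_1(F)$ crossings of $D$ at which successive splices unknot the diagram.

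Concretely, I would pass to a spanning tree of the dual or adjacency graph of the shaded regions and twisted bands that make up $F$. The $\beta_1(F)$ edges outside this spanning tree correspond to bands whose removal turns $F$ into a disk. Splicing $D$ at the crossings associated to these bands, in an order chosen so that each splice genuinely cuts a cycle in the remaining surface, should yield a diagram of the unknot, using exactly $\beta_1(F) = cc(K)$ splices and thereby proving $u^-(D) \leq cc(K)$.

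The hard part will be verifying that each splice in the sequence qualifies as a $-$ splice, as opposed to an arbitrary crossing resolution. This is where the alternating and prime hypotheses on $D$ must enter decisively: one needs to argue that the chosen ordering keeps the intermediate diagrams (or the shrinking surface) in the regime where the $-$ splice condition applies. I expect this to require a careful use of the checkerboard structure of alternating diagrams, together with either the flyping theorem or a direct analysis of how splices interact with reducedness, primeness, and the essentiality of state surfaces. The bookkeeping for this order-and-preservation argument is where I anticipate the technical bulk of the proof to lie.
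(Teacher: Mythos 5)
Your reduction to the single inequality $u^-(D)\leq cc(K)$ is correct and matches the paper's framing, and your general plan --- realize $cc(K)$ by an Adams--Kindred state surface $F_x$ and convert the $\beta_1(F_x)$ ``extra'' crossing bands into $u^-$ splices --- is indeed the paper's starting point. But the proposal stops where the work begins, and the difficulty you flag (``verifying that each splice qualifies as a $u^-$ splice'') is not the real obstruction. First, the spanning-tree data is not static: whether the splice at a given crossing is Seifert-type (hence forbidden in a splice-unknotting sequence), $RI^-$, or $u^-$ depends on the \emph{current} partially spliced diagram, since each non-Seifert splice reverses the orientation of one strand. Moreover, cutting a $u^-$ band can leave a \emph{2-sided} intermediate state surface (the Seifert surface of the new diagram), at which point no further band removal is a legal non-Seifert splice and the count breaks; ruling this out is the content of Proposition \ref{P:cutmin} via Lemma \ref{L:2SidedTangle}. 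And in the case $cc(K)=\beta_1(K)+1=2g(K)+1$, the minimal 1-sided state differs from the Seifert state at exactly one crossing, so there is only one ``non-tree'' edge and no freedom in which band to cut; the freedom lies in which state $x$ to choose, and Proposition \ref{P:key320} is needed to choose one whose splice yields a prime diagram.

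Second --- and this is the central difficulty the paper identifies in its introduction --- the intermediate diagrams can be non-prime (for $9_{10}$, \emph{every} available cut yields a link or a composite knot), and $u^-$ is additive under diagrammatic connect sum while $cc$ is not. This is why the paper does not build the whole sequence at once but instead inducts on $cc(K)$, performing one $u^-$ splice at a time and proving (Lemmas \ref{L:key321} and \ref{L:key1}, via the boundary-connect-summand and tangle-subsurface analysis of \textsection\ref{S:tangle}) that the arc $\alpha$ can be chosen so that $F_{x_\alpha}$ is 1-sided with $\beta_1(F_{x_\alpha})=cc(K_\alpha)$ and $K_\alpha$ is either prime or satisfies the additivity condition $(\dagger)$; only then does the (suitably strengthened) inductive hypothesis apply to $D_\alpha$. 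Your proposal contains no mechanism for handling composite intermediate knots, and without one neither the greedy count nor an induction closes.
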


The main idea behind Theorem \ref{T:main} is that, when $D$ is alternating, each splice-unknotting sequence that realizes $u^-(D)$ corresponds to a sequence of cuts (at vertical crossing arcs) which reduces some minimal-complexity state surface to a disk, via 1-sided spanning surfaces for other knots.  The main difficulty in the proof is that for some diagrams, like the one in Figure \ref{Fi:910}, any such sequence will include non-prime diagrams.  The trouble this presents is that $u^-(D)$ is additive under diagrammatic connect sum, whereas crosscap number is not additive under connect sum. Addressing this issue requires some work. Lemmas addressing tangles appear in \textsection\ref{S:tangle}, with further technical lemmas in \textsection\ref{S:3}.  The proof of Theorem \ref{T:main} follows in \textsection\ref{S:Main}. 
Ito has independently proven the same result \cite{ito19,it19}.  

\begin{figure}[b!]
\begin{center}
\includegraphics[height=100pt]{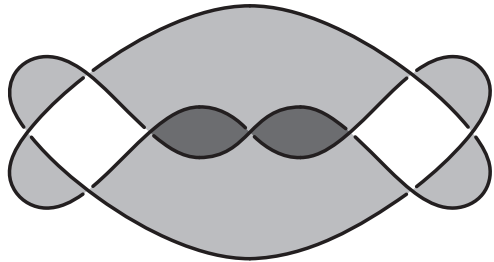}
\caption{This state surface for the $9_{10}$ knot realizes crosscap number, but cutting it at any crossing produces a state surface for either a 2-component link or a non-prime knot.}
\label{Fi:910}
\end{center}
\end{figure}

Section \ref{S:Compute} describes how Theorem \ref{T:main} enables an efficient computation of crosscap numbers for the table of prime alternating knots, using Gauss codes and data from the faces determined by the associated knot diagrams. An appendix lists the crosscap numbers for prime alternating knots through 12 crossings.\footnote{Crosscap numbers for prime alternating knots through at least 13 crossings are posted at \cite{tkcom}, together with data regarding these knots and their diagrams.} 
Previously, \cite{ak13} determined all of these values in theory, listing them through 10 crossings.  Currently, knotinfo lists crosscap numbers for 174 of the 367 prime alternating knots with 11 crossings and for 316 of the 1288 with 12 crossings \cite{kinfo}. Most of these values, and the upper and lower bounds for the remaining 11- and 12-crossing knots, come from either Burton-Ozlen, using normal surfaces \cite{bo12}, or from Kalfagianni-Lee, using properties of the colored Jones polynomial \cite{kl16}. Interestingly, every new crosscap number we compute through 12 crossings matches the upper bound currently given on knotinfo.

\section{Background}\label{S:Back}

\subsection{Splices, smoothings, and states}\label{S:State}

Let $D\subset S^2$ be an $n$-crossing diagram of a knot $K\subset S^3$. Let $c$ be a crossing of $D$, and let $\nu c$ be a disk about $c$ in $S^2$ such that $D\cap \nu c$ consists of two arcs which cross only at $c$. Up to isotopy, there are two ways to get an $(n-1)$-crossing knot diagram by replacing these two arcs within $\nu c$ with a pair of disjoint arcs.  These two replacements are called the {\it splices} of $D$ at $c$:
\[\raisebox{-.1in}{\includegraphics[height=.25in]{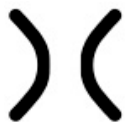}}\longleftarrow\raisebox{-.1in}{\includegraphics[height=.25in]{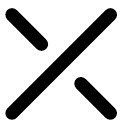}}\longrightarrow\raisebox{-.1in}{\includegraphics[height=.25in]{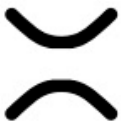}}\]

Orient $D$ arbitarily. Of the two splices of $D$ at a given crossing, one respects the orientation on $D$ and yields a diagram of a two-component link; this splice is said to be of {\it Seifert type}. The other splice yields a knot diagram and does not respect orientation.  If this {\it non-Seifert-type} splice has the same effect as a Reidemeister-I move, it is said to have {\it type $RI^-$}; otherwise this splice has {\it type $u^-$}. Note that splice types are independent of which orientation is chosen for $D$. See Figure \ref{Fi:SpliceTypes}.

There are also two {\it smoothings} of $D$ at any crossing $c$: these are the same as the splices of $D$ at $c$, except with an extra $\color{gray}A\color{black}$- or $B$-labeled arc in $\nu c$ glued to the resulting diagram:
\[\raisebox{-.1in}{\includegraphics[height=.25in]{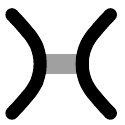}}\longleftarrow\raisebox{-.1in}{\includegraphics[height=.25in]{Crossing.pdf}}\longrightarrow\raisebox{-.1in}{\includegraphics[height=.25in]{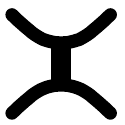}}\]
There are $2^n$ ways to smooth all the crossings in $D$, each of which results in a diagram $x$ called a {\it state}.  A state thus consists of a disjoint union of simple closed curves joined by $\color{gray}A\color{black}$- and $B$-
labeled arcs, one arc from each crossing in $D$.  The arcs and circles in $x$ are called {\it state arcs} and {\it state circles}, respectively.

\begin{figure}[b!]
\labellist
\small\hair 4pt
\pinlabel {$u^-$} [cb] at 660 90
\pinlabel {$\text{RI}^-$} [ct] at 660 60
\pinlabel {$S^-$} [cb] at 310 90
\endlabellist
\centerline{\includegraphics[width=5in]{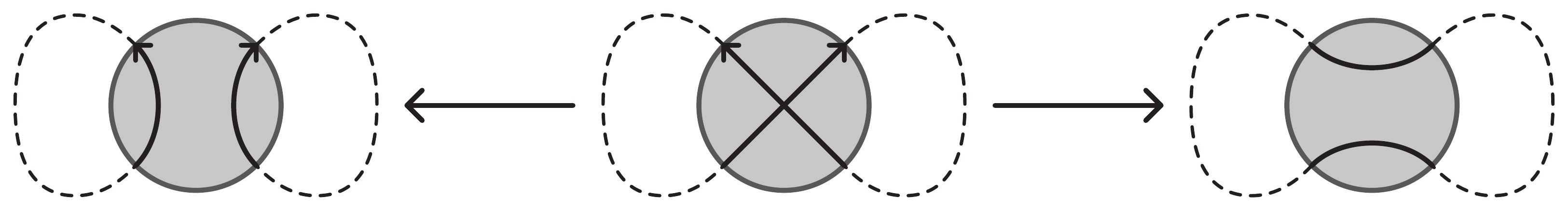}}
\caption{Seifert ($S^-$) and non-Seifert ($u^-$ and $\text{RI}^-$) type splices}
\label{Fi:SpliceTypes}
\end{figure}

\begin{figure}[b!]
\centerline{\includegraphics[width=5in]{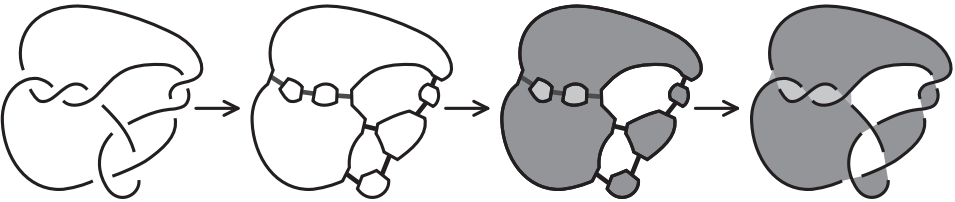}}
\caption[Constructing a state surface]{Constructing a state surface}
\label{Fi:StateSurfaceConstruction}
\end{figure}

\subsection{State surfaces}\label{S:StateSurface}

Given a state $x$ of a knot diagram, $D$, construct a {\it state surface} $F_x$ from $x$ as follows.  (See Figure \ref{Fi:StateSurfaceConstruction}.) 
First, as a preliminary step, perturb $D$ near each crossing point to obtain an embedding of $K$ in a thin neighborhood of $S^2$, such that projection $\pi:\nu S^2\to S^2$ sends $K$ to $D$. Note that the fiber over each crossing point $c$ contains a properly embedded arc in the knot complement; call this arc the {\it vertical arc} associated to $c$.  

Next, cap the state circles of $x$ with mutually disjoint disks whose interiors all lie on the same side of $S^2$. Then, near each state arc in $x$, glue on a half-twisted band (called a {\it crossing band}) which contains the associated vertical arc, such that the resulting surface $F_x$ spans $K$, $\partial F_x=K$.  

Given a state surface $F_x$ from a reduced\footnote{A knot diagram $D$ is {\it reduced} if every crossing is incident to four distinct disks of $S^2\backslash\backslash D$.} knot diagram, partition the vertical arcs in $F_x$ as $\mathscr{A}_x=\mathscr{A}_{x,S}\sqcup\mathscr{A}_{x,u}$, where $\mathscr{A}_{x,S}$ contains those of Seifert-type and $\mathscr{A}_{x,u}$ those of $u^-$ type.

\begin{obs}\label{O:2Sided}
Given a state surface $F_x$ from a reduced knot diagram, the following are equivalent:
\begin{enumerate}
\item The state surface $F_x$ is 2-sided.
\item The state $x$ has only Seifert-type smoothings, i.e. $\mathscr{A}_{x,u}=\varnothing$.
\item The boundary of each disk of $S^2\backslash\backslash x$ contains an even number of state arcs.\footnote{Notation: Whenever $X\subset Y$, $X\backslash\backslash Y$ denotes ``$X$-cut-along-$Y$.'' This is the metric closure of $X\setminus Y$, which is homeomorphic to $X\setminus \nu Y$, where $\nu Y$ is a regular open neighborhood of $Y$ in $X$.}
\end{enumerate}
\end{obs}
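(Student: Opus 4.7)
The plan is to establish the equivalence via the cycle $(1)\Rightarrow(2)\Rightarrow(3)\Rightarrow(1)$, combining a direct orientation argument with a cellular cohomology computation. For $(1)\Rightarrow(2)$, I would fix a 2-sided orientation of $F_x$ and transfer it to $K=\partial F_x$. Near each crossing of $D$, the induced orientation on the two strands singles out the Seifert-type smoothing---the one joining entering endpoints to exiting endpoints---as the unique smoothing compatible with the half-twisted crossing band; a $u^-$-type smoothing would force inconsistent orientations on the band, contradicting global 2-sidedness. Hence $\mathscr{A}_{x,u}=\varnothing$.

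For $(2)\Rightarrow(3)$, I would coherently orient all state circles using $K$'s orientation, which is possible since every smoothing is Seifert-type. For any face $F$ of $S^2\backslash\backslash x$, traverse $\partial F$ counterclockwise and record, at each state-circle arc traversed, a sign $\pm 1$ indicating whether the walk agrees with or opposes the orientation of the state circle it lies on. The sign is preserved at T-vertices where $\partial F$ stays on a single state circle. At each T-vertex where $\partial F$ transitions between a state-circle arc and a state arc, the local Seifert geometry determines the sign by the side on which the state arc lies relative to the oriented state circle; comparing the two endpoints of a single state arc on $\partial F$ then forces the sign to flip across that arc. Since the walk returns to its initial sign, $\partial F$ must contain an even number of state arcs. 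The main obstacle of the proof lies in verifying this sign-flip claim, which requires careful local analysis of the Seifert geometry at both endpoints of a state arc, ensuring that the arc sits on opposite sides of the coherently oriented state circles at its two endpoints.

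For $(3)\Rightarrow(1)$, I would view $x\subset S^2$ as the $1$-skeleton of a CW decomposition of $S^2$; condition (3) then says that the set of state arcs is a cellular $1$-cocycle with $\mathbb{Z}/2$-coefficients. Since $H^1(S^2;\mathbb{Z}/2)=0$, this cocycle is the coboundary of a $0$-cochain, producing a $\mathbb{Z}/2$-labeling of T-vertices in which state-circle arcs are monochromatic and state arcs are bichromatic. The labeling is constant along each state circle and descends to a proper $2$-coloring of the ribbon graph whose vertices are state circles and whose edges are state arcs; this graph is therefore bipartite, so every cycle in it crosses an even number of half-twisted crossing bands, and $F_x$ is 2-sided.
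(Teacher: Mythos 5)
Your proposal is correct, and its core mechanism agrees with the paper's (the paper gives only a one-sentence justification: the boundaries of the disks of $S^2\backslash\backslash x$ generate $H_1(F_x)$, and each such generator is the core of an annulus or a M\"obius band according to the parity of state arcs it meets --- which is exactly your $(3)\Rightarrow(1)$ step, with your appeal to $H^1(S^2;\mathbb{Z}/2)=0$ playing the role, dually, of the claim that face boundaries generate the cycle space of the spine). Where you genuinely depart from the paper is in routing the equivalence as a cycle $(1)\Rightarrow(2)\Rightarrow(3)\Rightarrow(1)$: your $(2)\Rightarrow(3)$ sign-flip argument (coherently orient the Seifert circles and check that a counterclockwise traversal of a face boundary reverses coherence exactly at each state arc) is a direct combinatorial verification that the paper would implicitly obtain by factoring through $(1)$. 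That leg does check out --- a local coordinate computation at a Seifert-type smoothing confirms that the two state-circle arcs adjacent to a state arc in a face boundary are traversed with opposite coherence, for all four local faces --- and it buys a proof of $(2)\Rightarrow(3)$ that never mentions the surface. Two small points worth making explicit: the hypothesis that $D$ is reduced is what guarantees every non-Seifert smoothing is of $u^-$ type (rather than $RI^-$), so that ``only Seifert-type smoothings'' is genuinely the condition $\mathscr{A}_{x,u}=\varnothing$; and in $(3)\Rightarrow(1)$ you should note that $x$ is connected (since $D$ is a knot diagram), so the complementary regions really are disks and give a CW structure on $S^2$, and that sidedness of $F_x$ is detected on its spine because the orientation character is a homomorphism on $H_1$.
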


Regarding the last condition, note that the boundaries of the components of $S^2\backslash\backslash x$ give a generating set for $H_1(F_x)$, and each generator corresponds to an annulus or a mobius band in $F_x$ according to whether it contains an even number of state arcs.  

\begin{figure}[b!]
\centerline{\includegraphics[width=5in]{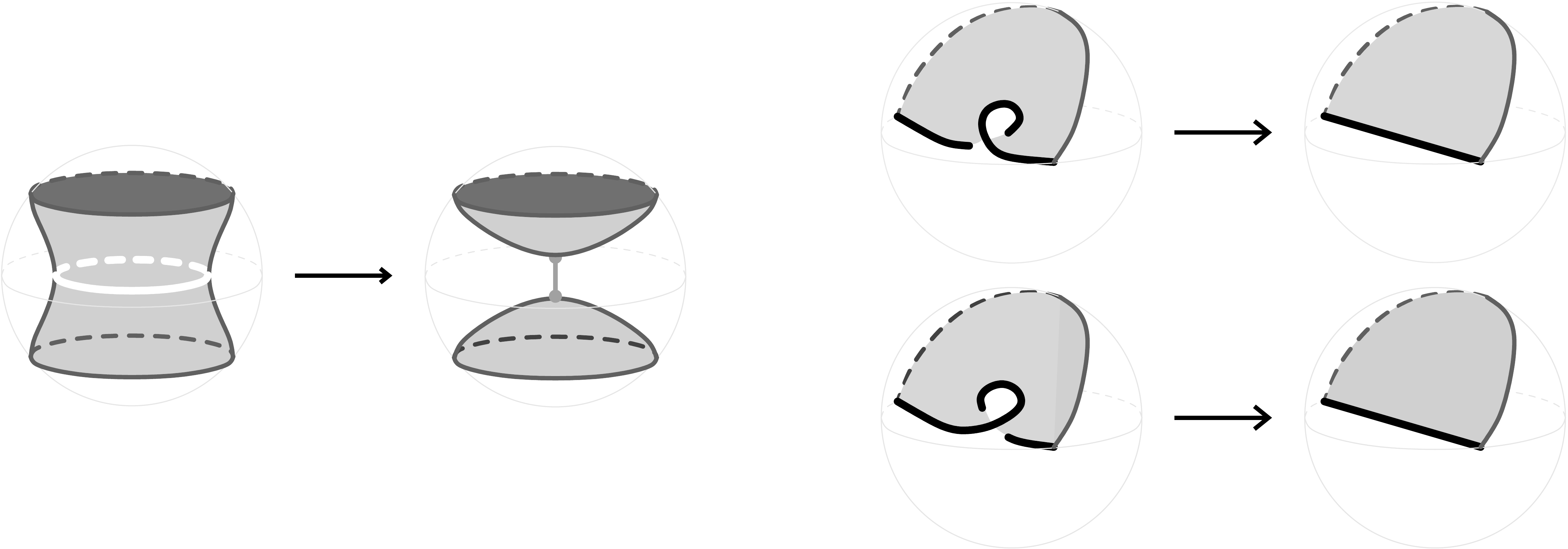}}
\caption[Compressing and $\partial$-compressing a spanning surface]{Compressing and $\partial$-compressing a spanning surface}
\label{Fi:compress}
\end{figure}

If $F$ is a spanning surface for $K$, then one can increase the complexity of $F$ by attaching a (positive or negative) crosscap or a handle.  The inverses of these local moves, called compression and $\partial$-compression, are shown in Figure \ref{Fi:compress}.  Note that attaching a $\pm$ crosscap increases $\beta_1(F)$ by 1 and changes $\text{slope}(F)$ by $\pm 2$, while attaching a handle increases $\beta_1(F)$ by 2 and does not change $\text{slope}(F)$.\footnote{When $F$ spans a knot $K$, $\text{slope}(F)$ denotes the boundary slope of $F$, which is the linking number of $K$ with a co-oriented pushoff of $K$ in $F$.}

There are two traditional notions of essentiality for spanning surfaces; we will work with the weaker, ``geometric'' notion, defined as follows.  If $F$ admits (resp. does not admit) a compression move, then $F$ is called {\it 
(in)compressible}.  If $F$ admits (resp. does not admit) a $\partial$-compression move, then $F$ is called {\it geometrically $\partial$-(in)compressible}.  If $F$ is (resp. is not) 
incompressible and 
$\partial$-incompressible, then $F$ is called {\it 
(in)essential}.\footnote{A standard application of the loop theorem implies that, with the exception of either mobius band spanning the unknot, if inclusion $\text{int}(F)\hookrightarrow S^3\setminus K$ induces an injective map on fundamental groups, then $F$ is essential.  That is, if $F$ is ``algebraically essential,'' or ``$\pi_1$-injective,'' then $F$ is (geometrically) essential. The converse is true when $F$ is 2-sided, but false in general.}

\begin{prop}\label{P:adqess}
Let $F_x$ be a 1-sided state surface from a reduced alternating diagram $D$ of a prime knot $K$, with $\beta_1(F_x)=cc(K)$.
Then the following are equivalent:
\begin{enumerate}
\item The state surface $F_x$ is 
essential.
\item The state $x$ is adequate.
\item The state $x$ has more than one non-Seifert smoothing.
\end{enumerate}
\end{prop}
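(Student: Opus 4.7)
The plan is to prove $(1) \Leftrightarrow (2)$ by invoking the standard theorem that state surfaces from adequate states on alternating diagrams are essential, and then to prove $(2) \Leftrightarrow (3)$ via a counting argument using the $\beta_1$ minimality hypothesis together with primeness.

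For $(2) \Rightarrow (1)$, I would invoke Ozawa's theorem (see also Futer, Kalfagianni, and Purcell) that on any alternating diagram the state surface from an adequate state is essential. For $(1) \Rightarrow (2)$, I argue the contrapositive: if $x$ is not adequate at some crossing $c$, then the state arc at $c$ has both endpoints on one state circle $\gamma$, and the disk capping $\gamma$ together with the crossing band at $c$ exhibits a compressing or $\partial$-compressing disk for $F_x$, witnessing inessentiality.

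For $(3) \Rightarrow (2)$, I argue the contrapositive. Suppose $|\mathscr{A}_{x,u}| \geq 2$ but $x$ is not adequate at some crossing $c$. Flipping the smoothing at $c$ splits one state circle into two, producing a state $x'$ with $|s_{x'}| = |s_x| + 1$ and $\beta_1(F_{x'}) = \beta_1(F_x) - 1 < cc(K)$. Since $|\mathscr{A}_{x,u}| \geq 2$, the flip leaves $|\mathscr{A}_{x',u}| \geq 1$, so $F_{x'}$ remains $1$-sided by Observation \ref{O:2Sided}, contradicting the minimality of $cc(K)$.

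For $(2) \Rightarrow (3)$, suppose for contradiction $x$ is adequate with $|\mathscr{A}_{x,u}| = 1$, at crossing $c$. Flipping $c$'s non-Seifert smoothing to Seifert yields the all-Seifert state $x_S$; by adequacy, $|s_{x_S}| = |s_x| - 1$, so $F_{x_S}$ is a $2$-sided Seifert surface with $\beta_1(F_{x_S}) = cc(K) + 1$. By Crowell--Murasugi applied to the reduced alternating diagram $D$, $F_{x_S}$ realizes $2g(K)$, giving $cc(K) = 2g(K) - 1$. I would then use primeness of $K$ together with the structure of $F_x$ as a Seifert surface with a single crosscap band attached at $c$ to construct either a more efficient $1$-sided spanning surface or a compressing disk for $F_x$, yielding a contradiction. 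I expect this last step to be the main technical obstacle, where primeness of $K$ must be used crucially to exclude the single-crosscap configuration; the diagrammatic lemmas in Sections \ref{S:tangle} and \ref{S:3} look well-suited to this analysis.
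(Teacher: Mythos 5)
Your handling of $(1)\Leftrightarrow(2)$ (citing Ozawa and Futer--Kalfagianni--Purcell) and your argument for $(3)\Rightarrow(2)$ (flip the inadequate crossing to obtain a $1$-sided state surface with $\beta_1=cc(K)-1$, contradicting minimality) both match the paper, which proves $(1)\Leftrightarrow(2)$ via the decomposition of $F_x$ into checkerboard plumbands and $(2)\Leftrightarrow(3)$ by exactly this flipping argument.

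The gap is in $(2)\Rightarrow(3)$, which you leave unfinished and for which your proposed completion is misdirected. Having reduced to the case where $x$ is adequate with a unique non-Seifert smoothing at a crossing $c$, you plan to derive a contradiction by using primeness of $K$ to build either a more efficient $1$-sided spanning surface or a compressing disk for $F_x$; but under the standing hypotheses $F_x$ realizes $cc(K)$ and (by your own direction $(2)\Rightarrow(1)$) would be essential, so neither object can exist, and the tangle lemmas of \textsection\ref{S:tangle}--\ref{S:3} will not produce one. The contradiction is instead purely combinatorial and needs neither primeness nor Crowell--Murasugi: the Seifert state of \emph{any} knot diagram is adequate, i.e.\ no Seifert circle meets both endpoints of a state arc. (At a crossing the two arcs of the orientation-respecting smoothing are coherently oriented in $\nu c$; if both lay on one Seifert circle $\gamma$, the two complementary arcs of $\gamma$ outside $\nu c$ would join interleaved pairs of points of $\partial (\nu c)$ and hence would have to cross, contradicting that $\gamma$ is embedded.) Consequently, if $x$ were adequate at its unique non-Seifert crossing $c$, flipping $c$ would merge two distinct state circles, and the resulting Seifert state would have its state arc at $c$ with both feet on the merged circle --- impossible. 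This is precisely the content of the paper's one-line assertion that a state differing from the Seifert state at exactly one crossing is non-adequate, and it closes the direction you flagged as the main technical obstacle.
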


\begin{proof}
Any state surface $F_x$ from an alternating diagram is a plumbing of checkerboard surfaces and is essential if and only if each checkerboard plumband is essential \cite{fkp13,fkp14,ozawa11}. Moreover, since $F_x$ comes from an alternating diagram, the checkerboard plumbands do as well, and so the checkerboard plumbands are all essential if and only if their underlying states are adequate; this is the case if and only if $x$ is adequate. Thus (1) and (2) are equivalent. 

If $x$ is non-adequate, then it differs from the Seifert state at exactly one crossing, since $\beta_1(F_x)=cc(K)$, so there is exactly one non-Seifert smoothing.  Conversely, if $x$ has at most one non-Seifert smoothing, then $x$ has exactly one non-Seifert smoothing, since $F_x$ is 1-sided. Hence, $x$ differs from the Seifert state at exactly one crossing, so $x$ is non-adequate. Thus (2) and (3) are equivalent.
\end{proof}

The main theorem in \cite{ak13} states that, when a knot $K$ has an alternating diagram $D$, the state surfaces from $D$, stabilized with crosscaps and handles, classify the spanning surfaces of $K$ up to homeomorphism type and boundary slope:

\begin{theorem}[Adams-Kindred \cite{ak13}]\label{T:AK}
Let $D$ be an alternating diagram of a knot $K$, and let $F$ be a spanning surface for $K$.  Then, by choosing an appropriate state surface from $D$ and attaching a (possibly empty) collection of crosscaps or handles, one can construct a spanning surface $F'$ for $K$ with the same number of sides (1 or 2) as $F$ and with $\beta_1(F')=\beta_1(F)$ and $\text{slope}(F')=\text{slope}(F)$.\footnote{Theorem \ref{T:AK} extends to alternating links, by replacing ``boundary slope'' with ``net'' or ``aggregate'' slope, which is the sum of the boundary slopes of $F$ along all the link components.}
\end{theorem}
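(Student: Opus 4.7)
The plan is to put $F$ into a Menasco-style normal form relative to $D$ and the projection sphere $S^2$, and then to identify $F$ with a state surface up to crosscap and handle stabilization by induction on the complexity of $F \cap S^2$. First I would isotope $F$ to be in general position with respect to $S^2$ and to each crossing ball $\nu c$, so that outside the crossing balls $F$ meets $S^2$ in a properly embedded $1$-manifold (a union of arcs and circles), while inside each $\nu c$ the intersection $F\cap \nu c$ is a finite union of standard disks and bands with one of a short list of local models relative to the two strands of $D\cap \nu c$. Throughout these isotopies the quantities of interest—the number of sides, $\beta_1(F)$, and $\mathrm{slope}(F)$—are preserved, so this costs nothing.

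Next I would reduce the number of intersection circles $F\cap S^2$ using innermost-disk and outermost-arc arguments. A circle of $F\cap S^2$ which bounds a disk in $F$ or a disk in one of the two $3$-balls $B_\pm$ cut off by $S^2$ can be removed by surgery; if such surgery changes the topology of $F$, we restore it by attaching a compensating handle or crosscap, which is permitted by the statement and which does not change the slope (handle) or changes it in a controlled way that can be undone with a second crosscap of opposite sign. In this way I would reduce to the case where every circle of $F\cap S^2$ is essential in both $F$ and in $B_\pm\setminus K$, and then use the alternating hypothesis—together with the standard fact that the checkerboard surfaces of an alternating diagram fill $S^3$ in a very rigid way—to eliminate such circles as well, so that $F\cap S^2$ consists only of arcs.

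At that point, reading off at each crossing which of the two local smoothing models $F\cap \nu c$ realizes produces a state $x$, and a matching argument shows that $F$ agrees with $F_x$ outside finitely many embedded arcs and disks in $S^2$, with the discrepancy accounted for by a collection of crosscaps and handles attached to $F_x$. The key bookkeeping is to check that when we attach a crosscap we indeed change slope by $\pm 2$ and when we attach a handle we leave slope unchanged, so that the cumulative effect on $\mathrm{slope}$ and $\beta_1$ can be balanced to match those of the original $F$. The main obstacle will be the elimination of intersection circles that are essential on both sides without destroying the slope, and in particular the step where the alternating hypothesis is used to rule out ``wrapping'' configurations analogous to those treated by Menasco in his proof of the non-triviality of reduced alternating diagrams; establishing that this rigidity forces only finitely many local models at each crossing is the technical heart of the argument.
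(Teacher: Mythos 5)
First, a point of order: this paper does not prove Theorem \ref{T:AK} at all --- it is imported wholesale from Adams--Kindred \cite{ak13}, so there is no in-paper proof to compare against. Your outline is, in spirit, the strategy of the original source: put $F$ into a Menasco-style normal form with respect to the projection sphere and crossing balls \cite{men84}, minimize a complexity, and read off a state from the local picture at each crossing, with the leftover topology accounted for by crosscaps and handles. So you have correctly identified the right framework.

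That said, as a proof the proposal has genuine gaps, and they are exactly the places you wave at. (1) The ``short list of local models'' for $F\cap\nu c$ is not free; establishing that a minimal-complexity surface meets each crossing ball only in saddles of a standard type is a substantial part of the argument, not a general-position statement. (2) The elimination of circles of $F\cap S^2$ that are essential in both $F$ and in $B_\pm\setminus K$ is asserted via ``the rigidity of checkerboard surfaces'' but never argued; Menasco's closed-surface argument does not apply verbatim to spanning surfaces, and this is where the actual work lives. (3) The stabilization bookkeeping is not as flexible as you suggest: a compression does not change $\mathrm{slope}(F)$ at all (it never touches $\partial F$), but it can change the number of sides, and your proposed fix of ``a second crosscap of opposite sign'' adds $2$ to $\beta_1$, which must then be absorbed into the final count; moreover you must verify that the state surface you land on satisfies $\beta_1(F_x)\le\beta_1(F)$ with the slope deficit realizable by crosscaps of the available signs, i.e.\ that $|\mathrm{slope}(F)-\mathrm{slope}(F_x)|\le 2(\beta_1(F)-\beta_1(F_x))$ with matching parity and sidedness. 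None of these is fatal to the strategy --- they are all resolved in \cite{ak13} --- but as written the proposal records the destination rather than the route through the hard terrain.
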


In particular:

\begin{cor}[Adams-Kindred \cite{ak13}]\label{C:AK}
If $D$ is an alternating diagram of a nontrivial knot $K$, then $cc(K)$ is realized by a state surface from $D$. That is, $D$ has a state $x$ whose state surface $F_x$ is 1-sided with $\beta_1(F_x)=cc(K)$.\footnote{Corollary \ref{C:AK} also holds for alternating links.}
\end{cor}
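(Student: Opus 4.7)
The plan is to read Corollary \ref{C:AK} as a direct consequence of Theorem \ref{T:AK}, with a small adjustment needed only when the state surface produced happens to be 2-sided. Start by choosing any 1-sided spanning surface $F$ for $K$ with $\beta_1(F)=cc(K)$. Theorem \ref{T:AK} then supplies a state $x$ of $D$ together with a collection of crosscaps and handles whose attachment to $F_x$ yields a spanning surface $F'$ with the same sidedness as $F$, the same boundary slope, and $\beta_1(F')=cc(K)$. Since attaching a crosscap raises $\beta_1$ by $1$ and attaching a handle raises $\beta_1$ by $2$, we get $\beta_1(F_x)\le cc(K)$ immediately.

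Next I would split into cases according to the sidedness of $F_x$. If $F_x$ is itself 1-sided, then $F_x$ is already a 1-sided spanning surface for $K$, and minimality of $cc(K)$ forces $\beta_1(F_x)\ge cc(K)$, hence $\beta_1(F_x)=cc(K)$ and $x$ is the desired state. If instead $F_x$ is 2-sided, then flipping sidedness to obtain the 1-sided $F'$ requires attaching at least one crosscap, so $\beta_1(F_x)\le cc(K)-1$. By Observation \ref{O:2Sided}, every smoothing of $x$ is then Seifert-type. I would now let $x'$ be the state obtained from $x$ by switching a single smoothing at any crossing to the non-Seifert type; then $x'$ has exactly one non-Seifert smoothing, so $F_{x'}$ is 1-sided by another application of Observation \ref{O:2Sided}.

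To close the argument I would compare $\beta_1(F_x)$ and $\beta_1(F_{x'})$ using the formula $\beta_1(F_x)=c-s+1$, where $c$ is the crossing number of $D$ and $s$ is the number of state circles in $x$. Swapping one smoothing changes $s$ by exactly $\pm 1$, so $\beta_1(F_{x'})=\beta_1(F_x)\pm 1 \le cc(K)$; combined with the 1-sidedness of $F_{x'}$ and minimality of $cc(K)$, this forces $\beta_1(F_{x'})=cc(K)$, completing the proof. The main obstacle is really the bookkeeping in the 2-sided case — one must rule out the possibility $\beta_1(F_x)\le cc(K)-2$, which the smoothing-swap argument does because even the worst case would otherwise produce a 1-sided spanning surface with $\beta_1<cc(K)$, contradicting the definition of crosscap number.
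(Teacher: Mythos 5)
Your argument is correct. The paper offers no written proof of Corollary \ref{C:AK} --- it is introduced by ``In particular,'' i.e., presented as an immediate consequence of Theorem \ref{T:AK} applied to a 1-sided surface realizing $cc(K)$ --- and your first paragraph is exactly that derivation. Where you add genuine content is in observing that Theorem \ref{T:AK}, as stated, only guarantees that the \emph{stabilized} surface $F'$ is 1-sided, so a priori the underlying state surface $F_x$ could be the 2-sided Seifert-state surface with $\beta_1(F_x)=cc(K)-1$, and the corollary is not quite verbatim immediate. Your repair --- reverse one smoothing of the Seifert state to obtain a 1-sided $F_{x'}$ with $\beta_1(F_{x'})=\beta_1(F_x)\pm 1\le cc(K)$, then squeeze against the minimality of $cc(K)$, the $-1$ branch being impossible for the same reason --- is sound: the $\pm 1$ claim holds because the two external arcs meeting $\partial\nu c$ cannot interleave in the planar complement of $\nu c$, and this is precisely the move the paper itself deploys later in \textsection\ref{S:320}, where reversing one smoothing of the Seifert state $y$ is shown to yield a 1-sided state surface with $\beta_1=\beta_1(F_y)+1=cc(K)$. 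Two small remarks: the bound $\beta_1(F_x)\le cc(K)-1$ in your 2-sided case needs only that the collection of attachments is nonempty and that each attachment raises $\beta_1$ by at least $1$, so it does not depend on settling whether a handle can reverse sidedness; and Observation \ref{O:2Sided} is stated for reduced diagrams, but the only implication you use (for a knot diagram, $F_x$ is 2-sided if and only if $x$ is the Seifert state) does not require reducedness.
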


Define the following invariant of any knot $K$: 
\[\beta_1(K):=\min_{\text{surfaces }F\text{ spanning }K}\beta_1(F).\]
Note that $\beta_1(K)=\min\{cc(K),2g(K)\}$, where $g(K)$ is the genus of $K$.  Note also that $\beta_1(K)<cc(K)$ if and only if $\beta_1(K)=2g(K)=cc(K)-1$, i.e. iff all of the surfaces realizing $\beta_1(K)$ are 2-sided. Moreover, $\beta_1(K_1\#K_2)=\beta_1(K_1)+\beta_1(K_2)$, by a standard argument. Therefore:

\begin{prop}\label{P:ConnSum}[Murakami-Yasuhara \cite{my95}]
Any knots $K_1,K_2$ satisfy $cc(K_1\#K_2)\leq cc(K_1)+cc(K_2)$.  Equality holds if and only if $cc(K_i)=\beta_1(K_i)$ for $i=1,2$. 
\end{prop}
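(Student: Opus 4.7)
The plan is to decompose the proposition into the upper bound and the sharpness criterion, handling both via boundary connect sums of spanning surfaces and leaning on the additivity $\beta_1(K_1\#K_2)=\beta_1(K_1)+\beta_1(K_2)$ stated just before the proposition. For the upper bound, I would choose, for each $i$, a 1-sided spanning surface $F_i$ for $K_i$ with $\beta_1(F_i)=cc(K_i)$; boundary-connect-summing $F_1$ and $F_2$ across a 2-sphere realizing the decomposition $K_1\#K_2$ yields a spanning surface $F$ for $K_1\#K_2$ with $\beta_1(F)=cc(K_1)+cc(K_2)$, and $F$ inherits a mobius band from $F_1$, so $F$ is 1-sided. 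This gives $cc(K_1\#K_2)\leq cc(K_1)+cc(K_2)$.

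For the equality condition, I would combine this bound with $\beta_1(K)\leq cc(K)$ into the chain
\[cc(K_1)+cc(K_2)\;\geq\;cc(K_1\#K_2)\;\geq\;\beta_1(K_1\#K_2)\;=\;\beta_1(K_1)+\beta_1(K_2).\]
If $cc(K_i)=\beta_1(K_i)$ for both $i$, this chain collapses, forcing equality throughout. For the converse, I would argue by contrapositive: suppose $\beta_1(K_j)<cc(K_j)$ for some $j\in\{1,2\}$. By the characterization in the paragraph immediately preceding the proposition, $K_j$ then admits a 2-sided spanning surface $F_j'$ with $\beta_1(F_j')=cc(K_j)-1$. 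Boundary-connect-summing $F_j'$ with a 1-sided crosscap-minimizing surface for the other knot produces a 1-sided spanning surface for $K_1\#K_2$ whose $\beta_1$-value is $cc(K_1)+cc(K_2)-1$, forcing the strict inequality $cc(K_1\#K_2)<cc(K_1)+cc(K_2)$.

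The only subtlety I anticipate is verifying that boundary connect sum of spanning surfaces preserves sidedness as claimed, namely that $F_1\natural F_2$ is 1-sided exactly when at least one of $F_1,F_2$ is. This follows by performing the connect sum along a flat, non-half-twisted rectangular band inside a 3-ball separating the summand knots, so that any mobius band in either $F_i$ persists in $F_1\natural F_2$. The argument is essentially bookkeeping, and I do not expect a genuine obstacle.
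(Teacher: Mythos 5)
Your proof is correct, and it uses exactly the ingredients the paper assembles immediately before the statement (additivity of $\beta_1$ under connect sum, and the dichotomy that $\beta_1(K)<cc(K)$ forces $\beta_1(K)=cc(K)-1$ with all minimizers 2-sided), combined with the standard boundary-connect-sum construction; the paper itself does not write out a proof but derives the proposition from these remarks and cites Murakami--Yasuhara. Your argument is the natural fleshing-out of that derivation, and the sidedness bookkeeping you flag is handled correctly.
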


\begin{cor}\label{C:ConnSum}
A knot $K=\#_{i\in I}K_i$ satisfies $cc(K)=\sum_{i\in I}cc(K_i)$ if and only if $K$ is prime or:
\begin{equation}\label{E:dagger}
\beta_1(K_i)=cc(K_i)\text{ for each }i\in I.
\tag{$\dagger$}
\end{equation}
\end{cor}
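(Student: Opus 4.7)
The plan is to reduce to the two-summand case handled by Proposition \ref{P:ConnSum}, by induction on $|I|$, using the fact (noted in the paragraph just before that proposition) that $\beta_1$ is additive under connect sum. When $K$ is prime, $|I|=1$ and the identity is vacuous, so the content is in the case $|I|\geq 2$, where the disjunction ``$K$ prime or $(\dagger)$'' reduces to just $(\dagger)$.

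For the $(\Leftarrow)$ direction, I would induct on $|I|$. The base case $|I|=2$ is Proposition \ref{P:ConnSum}. For the inductive step, pick $i_0\in I$ and write $K=K_{i_0}\#K'$ with $K'=\#_{i\in I\setminus\{i_0\}}K_i$. The inductive hypothesis applied to $K'$ (whose prime summands still satisfy $(\dagger)$) gives $cc(K')=\sum_{i\neq i_0}cc(K_i)$. Combining $(\dagger)$ with additivity of $\beta_1$ yields
\[\beta_1(K')=\sum_{i\neq i_0}\beta_1(K_i)=\sum_{i\neq i_0}cc(K_i)=cc(K'),\]
so $\beta_1=cc$ holds for both $K_{i_0}$ and $K'$, and the equality clause of Proposition \ref{P:ConnSum} applied to $K_{i_0}\#K'$ gives $cc(K)=cc(K_{i_0})+cc(K')=\sum_{i\in I}cc(K_i)$.

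For the $(\Rightarrow)$ direction, assume $cc(K)=\sum_{i\in I}cc(K_i)$ with $|I|\geq 2$. Fix any $i_0\in I$ and again split $K=K_{i_0}\#K'$. Proposition \ref{P:ConnSum} gives $cc(K)\leq cc(K_{i_0})+cc(K')$, and iterated application to $K'$ gives $cc(K')\leq\sum_{i\neq i_0}cc(K_i)$. Chaining these with the hypothesis forces every inequality to be an equality; in particular $cc(K_{i_0}\#K')=cc(K_{i_0})+cc(K')$, so the equality clause of Proposition \ref{P:ConnSum} forces $\beta_1(K_{i_0})=cc(K_{i_0})$. Since $i_0\in I$ was arbitrary, $(\dagger)$ holds.

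I do not expect any real obstacle: the corollary is a formal bookkeeping extension of Proposition \ref{P:ConnSum} from two factors to many, with additivity of $\beta_1$ under connect sum as the only additional ingredient. The one mild subtlety is that the equality clause of Proposition \ref{P:ConnSum} delivers $\beta_1=cc$ for both summands in the pair $(K_{i_0},K')$, and one must remember to extract only the information about the prime factor $K_{i_0}$ (the additional equality $\beta_1(K')=cc(K')$ is automatic from $(\dagger)$ and additivity, but is not what $(\dagger)$ itself asserts).
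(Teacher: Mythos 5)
Your proof is correct, and it follows exactly the route the paper intends: the corollary is stated without proof as an immediate consequence of Proposition \ref{P:ConnSum} together with the additivity of $\beta_1$ under connect sum noted just before it, and your induction on $|I|$ is the standard way to spell that out. The one point you flag --- that the equality clause of Proposition \ref{P:ConnSum} applied to the pair $(K_{i_0},K')$ yields more than is needed, and that only the conclusion for the prime factor $K_{i_0}$ should be extracted --- is handled correctly.
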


If $D$ is an $n$-crossing knot diagram, and $x$ is a state of $D$ with $\ell$ state circles, then its state surface satisfies 
\[\beta_1(F_x)=1-\chi(F_x)=1-(\ell-n)=n+1-\ell.\]
Thus, in order to compute $cc(K)$ when $K$ is alternating, it suffices to find a non-Seifert state $x$ of $D$
with a maximal number of state circles. Although there are $2^n-1$ possible states to choose from, \cite{ak13} describes an algorithm that shortens the list of potentially optimal states to at most $2^{\lfloor n/3\rfloor}$. 
Unfortunately, using this algorithm to compute the crosscap numbers of {\it all} alternating knots through a given number of crossings would require a separate computation for each distinct alternating knot. Ito-Takimura's splice-unknotting number $u^-(D)$ will improve the efficiency of this computation, at least in the case where $K$ is a {\it knot}. 

\subsection{Ito-Takimura's splice-unknotting number}
Let $D\subset S^2$ be an $n$-crossing diagram of a knot $K\subset S^3$. Ito-Takimura define the {\it splice-unknotting number} $u^-(D)$ as follows. 
Starting with $D$, there are $n!$ distinct sequences of non-Seifert splices, $D=D_n\to D_{n-1}\to\cdots\to D_{1}\to D_0=\bigcirc$, all of which terminate with the trivial diagram of the unknot.  
Ito-Takimura define $u^-(D)$ to be the minimum number of $u^-$ splices among these {\it splice-unknotting sequences}.\footnote{Since the over-under information at each crossing is immaterial in this definition, the splice-unknotting number $u^-(D)$ is most naturally defined on knot {\it projections}, rather than on knot diagrams, and indeed this is how Ito-Takimura defined it.} 
They prove:

\begin{theorem}[Ito-Takimura]\label{T:ItoLeq}
If $D$ is a diagram of a nontrivial knot $K$, then 
\[cc(K)\leq u^-(D).\]
\end{theorem}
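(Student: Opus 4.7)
The plan is to construct a 1-sided spanning surface $F$ for $K$ with $\beta_1(F)\leq u^-(D)$ by reversing a splice-unknotting sequence of $D$ that realizes $u^-(D)$. I will induct on the number of crossings $n$ of $D$.

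Fix a splice-unknotting sequence $\sigma\colon D=D_n\to D_{n-1}\to\cdots\to D_0=\bigcirc$ with exactly $u^-(D)$ splices of type $u^-$, and let $K_i$ denote the knot represented by $D_i$. Inductively, I will produce a spanning surface $F_i$ for $K_i$ whose first Betti number equals the number of $u^-$-type splices in the tail $D_i\to\cdots\to D_0$. Take $F_0$ to be a disk bounding $K_0=\bigcirc$. For the inductive step, suppose $F_{i-1}$ has been built. If the splice $D_i\to D_{i-1}$ is of type $RI^-$, then by definition $D_i$ and $D_{i-1}$ differ by a Reidemeister I move, so $K_i$ and $K_{i-1}$ are ambient isotopic in $S^3$; push $F_{i-1}$ by this isotopy to obtain $F_i$, and $\beta_1$ is unchanged. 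If instead the splice is of type $u^-$, then $K_i$ is obtained from $K_{i-1}$ in $S^3$ by attaching a small half-twisted band $B$ at the location of the reintroduced crossing, with the two ``long'' sides of $B$ glued along the two parallel arcs of $K_{i-1}$ that the splice produced, while the two ``short'' sides of $B$ become the transverse strands of the new crossing. Setting $F_i=F_{i-1}\cup B$ gives a spanning surface for $K_i$ with $\beta_1(F_i)=\beta_1(F_{i-1})+1$, and because $B$ carries a half-twist one can trace a loop on $F_i$ that reverses orientation, so $F_i$ is 1-sided.

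Taking $i=n$ yields a spanning surface $F=F_n$ for $K$ with $\beta_1(F)\leq u^-(D)$. Because $K$ is nontrivial, $\sigma$ cannot consist entirely of $RI^-$ splices---otherwise every step is a diagrammatic isotopy and $K$ would be the unknot---so at least one half-twisted band has been attached, making $F$ 1-sided; therefore $cc(K)\leq\beta_1(F)\leq u^-(D)$. The one point to justify carefully is the band-attachment claim for $u^-$ splices: that reintroducing the crossing $c$ in the non-Seifert way corresponds in $S^3$ to gluing a narrow half-twisted band $B$ in a small 3-ball above $\nu c$, with $\partial B\cap F_{i-1}$ equal to the two parallel arcs of $K_{i-1}$ near $\nu c$. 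I would verify this inside a 3-ball thickening of $\nu c$, exactly as in the crossing-band step of the state-surface construction reviewed in \textsection\ref{S:StateSurface}, after a preliminary isotopy of $F_{i-1}$ off a tubular neighborhood of the reintroduced crossing to ensure $B$ meets $F_{i-1}$ only along the two attaching arcs.
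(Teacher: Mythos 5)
Your proposal is correct and follows essentially the same route as the paper's argument for Theorem \ref{T:ItoLeq}: reverse a minimal splice-unknotting sequence starting from a disk, performing an isotopy for each $RI^-$ splice and gluing a half-twisted crossing band for each $u^-$ splice, so that the final surface spans $K$ with $\beta_1$ equal to the number of $u^-$ splices. Your closing observation that a nontrivial $K$ forces at least one band attachment, and hence a 1-sided result, is exactly the paper's remark that the first gluing move produces a M\"obius band which persists in all later surfaces.
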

\begin{figure}[b!]
\centerline{\includegraphics[width=5in]{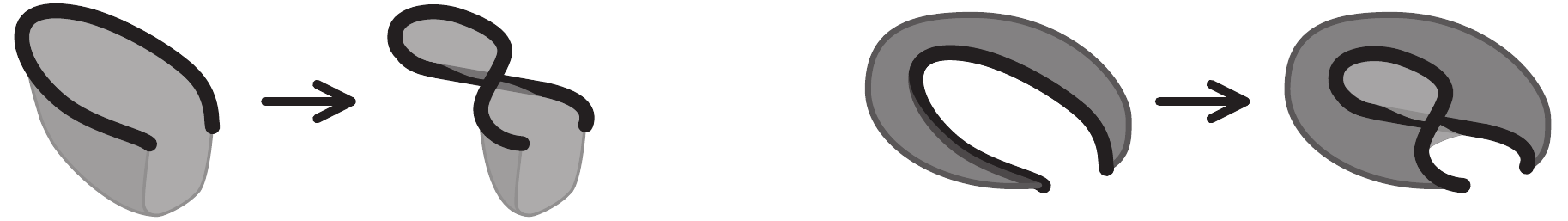}}
\caption[Ito-Takimura's construction (I)]{Ito-Takimura's construction performs an isotopy for each $R1$-splice.}
\label{Fi:ConstructR1}
\end{figure}
\begin{figure}[b!]
\centerline{\includegraphics[width=5in]{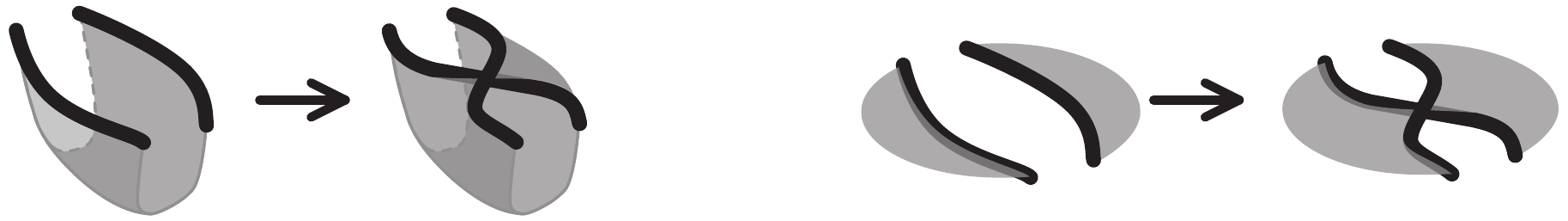}}
\caption[Ito-Takimura's construction (II)]{Ito-Takimura's construction attaches a crossing band for each $u^-$ splice.}
\label{Fi:ConstructU}
\end{figure}
The point is this: if $D=D_n\to D_{n-1}\to\cdots\to D_{}\to D_0=\bigcirc$ is a splice-unknotting sequence that realizes $u^-(D)$, then one can construct a 1-sided state surface $F_n$ for $D$ with $\beta_1(F_n)=u^-(D)$ as follows. For each $D_i$, let $K_i$ be the underlying knot. Let $F_0$ be a disk spanning the unknot $K_0$.  For each splice $D_{i}\to D_{i-1}$, construct $F_i$ from $F_{i-1}$ by:
{\begin{itemize*}
\item performing a local isotopy move, as in Figure \ref{Fi:ConstructR1}, if the splice has type $RI^-$; or
\item gluing a crossing band to $F_{i-1}$, as in Figure \ref{Fi:ConstructU}, if the splice has type $u^-$. 
\end{itemize*}}
This sequence must include at least one gluing move, or else $F_n$ would be a disk. Moreover, the first gluing move $F_{k-1}\to F_{k}$ produces a mobius band.  Thus, all surfaces $F_i$ with $i\geq k$ are 1-sided. Hence, the sequence $F_0\to\cdots\to F_n$ terminates with a 1-sided surface $F_n$ that spans $K$ and has $\beta_1(F_n)=u^-(D)$. Therefore, $cc(K)\leq \beta_1(F_n)=u^-(D)$.
 
Define the {\it splice-unknotting number} of any knot $K\subset S^3$ to be:
\[u^-(K)=\min_{\text{diagrams }D\text{ of }K}u^-(D).\] 
Observe that this is a knot invariant. Also note:

\begin{cor}\label{C:ItoLeq}
For any nontrivial knot $K$, $cc(K)\leq u^-(K)$.
\end{cor}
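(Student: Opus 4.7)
The plan is to derive Corollary \ref{C:ItoLeq} as an essentially immediate consequence of Theorem \ref{T:ItoLeq} combined with the definition of $u^-(K)$. Since $cc(K)$ depends only on $K$ (not on any particular diagram), the inequality $cc(K)\leq u^-(D)$ from Theorem \ref{T:ItoLeq} holds uniformly across the set of all diagrams $D$ of $K$, so passing to the infimum on the right-hand side preserves the inequality.

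Concretely, I would proceed as follows. First, fix a nontrivial knot $K$ and let $D$ range over all diagrams of $K$. Theorem \ref{T:ItoLeq} gives the pointwise bound $cc(K)\leq u^-(D)$ for each such $D$. Second, since the left side does not depend on $D$, taking the minimum over all diagrams on the right yields
\[cc(K)\leq \min_{\text{diagrams }D\text{ of }K}u^-(D)=u^-(K),\]
where the final equality is just the definition of $u^-(K)$ recalled in the line immediately preceding the corollary.

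There is really no obstacle here; the only things to check are that the minimum in the definition of $u^-(K)$ is attained (or at least well-defined as an infimum of nonnegative integers, which it is, since $u^-(D)\in\mathbb{Z}_{\geq 0}$), and that Theorem \ref{T:ItoLeq} applies to every diagram of a nontrivial knot, which is exactly its hypothesis. So the corollary is simply the universal-to-existential passage from the diagram-level bound to the invariant-level bound.
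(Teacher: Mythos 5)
Your proof is correct and is exactly the paper's argument: apply Theorem \ref{T:ItoLeq} to every diagram $D$ of $K$ and pass to the minimum over diagrams, which by definition is $u^-(K)$. Nothing further is needed.
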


\begin{proof} Theorem \ref{T:ItoLeq} gives:
\[\pushQED{\qed} cc(K)\leq \min_{\text{diagrams }D\text{ of }K}u^-(D)= u^-(K)\qedhere\]
\end{proof}

Ito-Takimura prove that $u^-(D)$ is additive under diagrammatic connect sum, although crosscap number is not additive under connect sum (see Proposition \ref{P:ConnSum}). With this in mind, Ito-Takimura ask:

\begin{question}[Ito-Takimura]
Does there exist an alternating diagram $D$ of a prime knot $K$ such that $u^-(D)>cc(K)$?
\end{question}

Theorem \ref{T:main} will answer this question in the negative.

\section{Boundary connect summands and tangle subsurfaces}\label{S:tangle}
Assume throughout \textsection\ref{S:tangle}, that $D$ is an alternating diagram of a nontrivial knot $K$, and $F_x$ is a 1-sided essential state surface from $D$. Also, given a $u^-$ type vertical arc $\alpha\subset F_x$, denote $F_x\backslash\backslash\alpha=F_{x_\alpha}$ and $\partial F_{x_\alpha}=K_\alpha$.

Given a compact and connected subset $U\subset S^2$ whose boundary is disjoint from all state arcs in $x$, let $x^U$ denote the union of all state circles and state arcs of $x$ that intersect $U$, and let $F_{x}^U$ denote the associated state surface, which is a subset of $F_x$.  With this notation, we define diagrammatic notions of boundary connect sum and tangle decompositions for state surfaces, and characterize a few of their properties.

Although, strictly speaking, we will not need this fact, it is worth noting that these diagrammatic notions are more general than they seem a priori, because $D$ is alternating.  The basic point here is that, by work of Menasco \cite{men84}, any 2- or 4-punctured sphere can be isotoped in the knot complement to intersect $S^2$ in a single circle; hence, every connect sum or tangle decomposition of the alternating knot $K$ can be realized diagrammatically.  When $F_x$ is essential, every boundary connect sum or tangle decomposition of $F_x$ can also be realized diagrammatically.  For our purposes, however, it is more straightforward just to define these notions diagrammatically in the first place.

\subsection{Boundary connect summands}

A {\it boundary connect summand} of $F_x$ is any $F_x^U$, where:

\begin{itemize*}
\item each component of $\partial U$ is disjoint from state arcs and intersects $x$ transversally in two points,
\item $F_x^U$ is connected but not simply connected,
\item for any simple closed curve $\gamma\subset U$ which is disjoint from state arcs and intersects $x$ transversally in exactly two points, all of the non-nugatory state arcs in $U$ lie on the same side of $\gamma$.\footnote{A state arc $\beta$ in $x$ is {\it nugatory} if $x\setminus\text{int}(\beta)$ is disconnected.}

\end{itemize*}

Note that the last two conditions in the definition imply that any boundary connect summand $F_x^U$ is {\it prime}, meaning that if $F_x^{U'}$ is a boundary connect summand of $F_x^U$, then $F_x^{U}$ and $F_x^{U'}$ are isotopic in $F_x$.

\begin{obs}\label{O:PrimeToNot}
Suppose that $F_x$ is prime, but that, for some $u^-$ type vertical arc $\alpha$, $F_{x_\alpha}$ is not prime.  Then every boundary connect summand of $F_{x_\alpha}$ has the form $F_{x_\alpha}^U$, where $U$ is a disk or an annulus, and each component of $\partial U$ intersects the state arc $\beta=x\setminus x_\alpha$.  Moreover, when $D$ is oriented, both points of $D\cap\partial U$ where $D$ points out of $U$ lie on the same state circle, and the orientation of one of the two strands of $D\cap U$ is reversed in $D_\alpha\cap U$.
\end{obs}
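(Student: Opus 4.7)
The plan is to exploit primality of $F_x$ using the fact that $F_x$ and $F_{x_\alpha}$ agree outside a small neighborhood $\nu c$ of the crossing $c$ at $\alpha$, where the state arcs are $\beta$ (for $x$) and $\beta'$ (for $x_\alpha$), crossing transversely at $c$. Any simple closed curve in $S^2$ that avoids $\beta$ and $\beta'$ can be isotoped off $\nu c$ and becomes insensitive to whether we use $x$ or $x_\alpha$, so primality of $F_x$ forces such a curve to yield only a trivial decomposition. This will give the desired constraints on $\partial U$.

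Fix a boundary connect summand $F_{x_\alpha}^U$ and normalize $\partial U$ by isotopy to minimize $|\partial U\cap\nu c|$ and $|\partial U\cap\beta|$, subject to the boundary connect summand conditions. The first main step is to show that each component of $\partial U$ meets $\beta$. Suppose some component $\gamma$ misses $\beta$; since $\gamma$ also misses $\beta'$, it can be isotoped off $\nu c$ entirely. Then $\gamma$ splits $S^2$ into disks $V_1\ni c$ and $V_2$, with $F_x^{V_2}=F_{x_\alpha}^{V_2}$, and $\gamma$ becomes a candidate decomposing curve for $F_x$. Primality of $F_x$ forces one of $F_x^{V_1}, F_x^{V_2}$ to be a disk. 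If $F_x^{V_2}$ is a disk then $F_{x_\alpha}^{V_2}$ is too, so $\gamma$ is redundant in $\partial U$, contradicting minimality. If instead $F_x^{V_1}$ is a disk, the single crossing inside $V_1$ forces exactly two state circle components in $V_1$; swapping to the $x_\alpha$ smoothing changes this count to $1$ or $3$, and the non-Seifert type ($u^-$, not $RI^-$) of $\alpha$ rules out the configurations in which $F_{x_\alpha}^{V_1}$ could contribute an essential summand, again forcing $\gamma$ to be redundant.

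With every component of $\partial U$ meeting $\beta$, I would bound the number of components using the primality condition on $F_{x_\alpha}^U$. Inside $\nu c$, the arcs of $\partial U$ each lie in one of the two half-disks of $\nu c\setminus\beta'$ and cross $\beta$ once; any configuration with three or more disjoint boundary circles of $U$ all crossing $\beta$ would allow the construction of a simple closed curve in $U$, disjoint from state arcs and meeting $x_\alpha$ transversely in two points, with non-nugatory state arcs of $x_\alpha$ on both sides, contradicting primality. Hence $U$ is a disk or annulus. For the moreover statement, orient $D$; because $\alpha$ has type $u^-$, consistently orienting $D_\alpha$ requires reversing exactly one of the two strands of $D$ through $\nu c$, and tracking this reversal along $\partial U$ forces the two points of $D\cap\partial U$ where $D$ points out of $U$ to lie on the same state circle, with one of the two strands of $D\cap U$ having its orientation reversed in $D_\alpha\cap U$. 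The main obstacle, I expect, is the sub-case where $F_x^{V_1}$ is a disk in the primality step, which requires a careful local analysis leveraging the distinction between $u^-$ and $RI^-$ splices.
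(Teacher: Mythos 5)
The paper offers no written proof of this Observation (it is justified only by Figure \ref{Fi:PrimeToNot} and the definitions), so I can only judge your argument on its own terms. Your overall strategy for the first claim is the right one: a component of $\partial U$ missing $\beta$ avoids every state arc of $x$ and so becomes a candidate decomposing curve for $F_x$, which primality of $F_x$ must kill. But two steps as written would fail. First, in the sub-case where $F_x^{V_1}$ is a disk ($V_1$ the side containing $c$), your analysis is off: $V_1$ need not contain only the one crossing $c$, and the state circles do not change between $x$ and $x_\alpha$ at all --- $x_\alpha=x\setminus\beta$ simply lacks the arc $\beta$ (there is no second state arc $\beta'$ at $c$), so counting ``$1$ or $3$'' circles after ``swapping smoothings'' is not a meaningful step. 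The sub-case you flag as the main obstacle actually has a short resolution you are missing: if $F_x^{V_1}$ is a disk then $F_{x_\alpha}^{V_1}=F_x^{V_1}\backslash\backslash\alpha$ is a disjoint union of disks, so any connected $F_{x_\alpha}^U$ with $U\subset \overline{V_1}$ is simply connected, contradicting the definition of a boundary connect summand; no $u^-$ versus $RI^-$ dichotomy is needed.

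Second, and more seriously, your bound on the number of boundary components does not follow from ``all components of $\partial U$ cross $\beta$.'' Three disjoint circles can each cross a single embedded arc without producing any two-point test curve violating the internal primality condition, so the contradiction you invoke is not there. What the argument actually needs is the stronger statement that each component of $\partial U$ \emph{separates the two endpoints} of $\beta$: if a decomposing curve for $F_{x_\alpha}$ fails to separate them, it can be isotoped off $\beta$ entirely and then contradicts primality of $F_x$ as in your first step. Once every component of $\partial U$ separates the same two points of $S^2$, the components are nested, and a connected $U$ with a nontrivial complementary piece beyond each boundary circle can have at most two of them --- this is what forces $U$ to be a disk or an annulus, and it also feeds directly into the ``moreover'' clause (each $\partial U$ component passes between the two strands of $D$ at $c$, which is where the orientation reversal and the same-state-circle claim come from). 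Your sketch of the orientation statement is plausible but, without the separation property, it has nothing to track along.
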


\begin{figure}[b!]
\centerline{\includegraphics[width=4.5in]{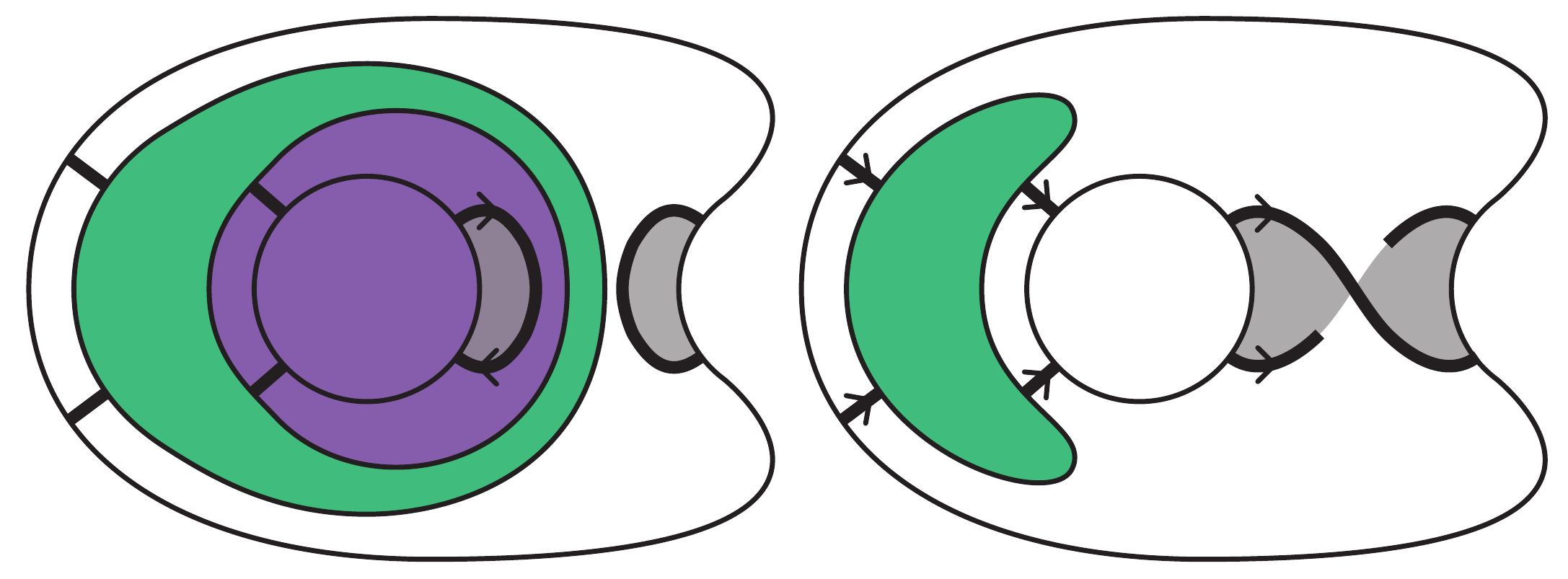}}
\caption{If $F_{x_\alpha}$ is an essential boundary connect sum, then each of its summands appears as left (purple or green).  Hence, $F_x$ has an associated minimal tangle subsurface, shown right (green).}
\label{Fi:PrimeToNot}
\end{figure}

See Figure \ref{Fi:PrimeToNot}.  In particular:

\begin{obs}\label{O:PrimeToNotKnot}
Suppose a $u^-$ type splice at a crossing $c$ in $D$ produces a diagram $D'$ of a non-prime knot $K'$.  Then there is a simple closed curve $\gamma\subset S^2$ which intersects $D$ transversally at $c$ and two other points, both on edges of $D$ not incident to $c$.  Moreover, both disks of $S^2\setminus \gamma$ contain non-nugatory crossings in $D'$.
\end{obs}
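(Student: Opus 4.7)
The plan is to derive $\gamma$ from a boundary connect sum decomposition of $F_{x_\alpha}:=F_x\backslash\backslash\alpha$ via Observation \ref{O:PrimeToNot}, where $\alpha$ is the vertical arc over $c$. Since $F_x$ is essential and $\alpha$ is of $u^-$ type, the cut surface $F_{x_\alpha}$ is essential, by the checkerboard plumbing analysis underlying Proposition \ref{P:adqess}. Because $K'=\partial F_{x_\alpha}$ is non-prime, any essential spanning surface of $K'$ is a non-trivial boundary connect sum, so $F_{x_\alpha}$ admits such a decomposition, which for state surfaces of alternating diagrams can be realized diagrammatically in $S^2$. Assuming $F_x$ itself is prime---which holds in the setting of Theorem \ref{T:main}, where $K$ is prime and $D$ is alternating---Observation \ref{O:PrimeToNot} identifies any boundary connect summand as $F_{x_\alpha}^U$, with $U$ a disk or annulus whose boundary crosses the state arc $\beta=x\setminus x_\alpha$ located in $\nu c$.

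Fix a component $\gamma_0$ of $\partial U$. By the definition of boundary connect summand, $\gamma_0$ is a simple closed curve disjoint from state arcs of $x_\alpha$ and meeting $x_\alpha$ transversally in exactly two points, which therefore lie on state circles, hence on edges of $D$, outside $\nu c$. Inside $\nu c$, $\gamma_0$ crosses the state arc $\beta\subset x$ while avoiding the complementary smoothing arc $\beta_\alpha\subset x_\alpha$; a small isotopy supported in $\nu c$ slides this transverse intersection onto the crossing point $c$ itself, producing the desired $\gamma$ meeting $D$ transversally at $c$ and at two other points on edges of $D$. Those two edges may be taken not incident to $c$: if either happens to be, one may reroute $\gamma$ locally to move the intersection onto an adjacent non-incident arc, using the flexibility afforded by $D$ being reduced.

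For the ``moreover'' claim, the non-triviality of the boundary connect sum decomposition of $F_{x_\alpha}$ forces both $U$ and its complement in $S^2$ to contain non-nugatory state arcs of $x_\alpha$, via the third bullet in the definition of boundary connect summand applied to each summand. Since state arcs of $x_\alpha$ located at crossings of $D$ other than $c$ correspond bijectively and nugatoriness-preservingly to crossings of $D'$, both components of $S^2\setminus\gamma$ contain non-nugatory crossings of $D'$. The main technical obstacle I anticipate is verifying the essentiality of $F_{x_\alpha}$ and the primeness of $F_x$, both of which are needed to invoke Observation \ref{O:PrimeToNot}; both rest on the plumbing structure of state surfaces from reduced alternating diagrams developed around Proposition \ref{P:adqess}.
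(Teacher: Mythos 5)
Your overall strategy --- extract the circle from a boundary connect sum decomposition of the spliced object and use primality of the ambient object to force that circle through the splice site --- matches the paper, which presents this observation as an immediate consequence of Observation \ref{O:PrimeToNot} and Figure \ref{Fi:PrimeToNot}. However, your route runs through surface-theoretic hypotheses that are not available where the observation is actually used, and you correctly flag this as an obstacle at the end; it is not merely technical. You begin with ``since $F_x$ is essential \dots\ the cut surface $F_{x_\alpha}$ is essential,'' and then invoke the fact that an essential spanning surface of a composite knot decomposes as a nontrivial boundary connect sum. First, the statement concerns an arbitrary crossing $c$ of $D$, and there need not be an \emph{essential} state surface whose smoothing at $c$ is non-Seifert: in the one place the observation is applied (Proposition \ref{P:key320}, the case $\beta_1(K)<cc(K)$), the natural state $x$ with a $u^-$ arc at $c$ differs from the Seifert state at that single crossing, hence is non-adequate, hence $F_x$ is \emph{inessential} by Proposition \ref{P:adqess} --- your opening premise fails exactly in the intended application. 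Second, even granting an essential $F_x$, essentiality of $F_{x_\alpha}$ is not a formal consequence of the plumbing analysis; the paper establishes it only later (Proposition \ref{P:cutmin}), under the additional hypothesis $\beta_1(K)=cc(K)$ and by a different argument. The paper's definitions of boundary connect summand and tangle subsurface are deliberately combinatorial precisely so that none of this is needed: the decomposing circle for $D'$ comes directly from Menasco's theorem that an alternating diagram of a composite knot is a diagrammatic connect sum (with non-nugatory crossings on both sides), and primality of $K$ together with reducedness of $D$ is what forbids that circle from avoiding $\nu c$, after which a local isotopy pushes it through the crossing point $c$.

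A second, smaller gap is in your ``moreover'' step. A non-simply-connected summand on each side of $\gamma$ gives a cycle in the state graph of $x_\alpha$, hence non-nugatory \emph{state arcs} on each side; but non-nugatory state arcs do not correspond to non-nugatory \emph{crossings}. A reducible (nugatory) crossing admits a smoothing whose state arc joins a state circle to itself and is therefore not a cut edge of the state graph, so a diagram consisting entirely of nugatory crossings can still carry states with cycles. The clean way to obtain non-nugatory crossings of $D'$ on both sides is, again, to take $\gamma$ to be a decomposing circle for the diagram $D'$ itself, chosen so that each complementary disk carries a nontrivial connect summand of $K'$.
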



\subsection{Tangle subsurfaces}

A  {\it tangle subsurface} of $F_x$ is any $F_x^U$, where:

\begin{itemize*}
\item $U\subset S^2$ is compact and connected,
\item $\partial U$ intersects $x$ transversally in four points and is disjoint from all state arcs in $x$,
\item $F_x^U$ 
is connected but not simply connected.
\end{itemize*}
Then $F_x^U$ is the {tangle subsurface} of $F_x$ determined by $U$. Note that $D\cap U$ is a (diagrammatic) {\it tangle} in the traditional sense. 

\begin{prop}\label{P:cut1}
Suppose that $F_x^U$ is a 2-sided tangle subsurface of $F_x$ which contains a $u^-$ type vertical arc $\alpha$. If $F_{x_\alpha}^U$ is connected, then $F_{x_\alpha}$ is 1-sided.
\end{prop}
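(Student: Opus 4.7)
The plan is to exploit that cutting along $\alpha$ modifies $F_x$ only inside $U$, and to trace the 1-sidedness of $F_x$ through the decomposition $F_x = F_x^U \cup F_x^V$, where $V = S^2 \setminus \mathrm{int}(U)$. Since $\partial U$ is disjoint from all state arcs, every state arc of $x$ lies entirely in $U$ or entirely in $V$, so every crossing band of $F_x$ lies wholly in $F_x^U$ or in $F_x^V$. Setting $W := F_x^U \cap F_x^V$, we see $W$ is a disjoint union of capping disks, one for each state circle of $x$ that crosses $\partial U$. Because $\alpha$ lies in the crossing band at some crossing $c \in U$, the arc $\alpha$ is disjoint from $F_x^V$, and hence $F_{x_\alpha}^V = F_x^V$ and $F_{x_\alpha}^U \cap F_x^V = W$. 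Since cutting a 2-sided surface along a proper arc yields a 2-sided surface, $F_{x_\alpha}^U$ is also 2-sided.

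First I would handle the case where $F_x^V$ is 1-sided. Any orientation-reversing loop $\gamma$ in $F_x^V$ is automatically disjoint from $\alpha$; since $F_{x_\alpha}$ agrees with $F_x$ as a 2-manifold away from $\alpha$, this $\gamma$ persists as an orientation-reversing loop in $F_{x_\alpha}$, so $F_{x_\alpha}$ is 1-sided. In the remaining case, $F_x^U$, $F_x^V$, and $F_{x_\alpha}^U$ are all 2-sided, yet $F_x$ is 1-sided. Fixing orientations of $F_x^U$ and of each component of $F_x^V$ and comparing their restrictions on each disk component $w_i \subset W$ assigns $\epsilon_i \in \mathbb{Z}/2$ to each $w_i$ (agree/disagree); the 1-sidedness of $F_x$ is equivalent to the statement that $(\epsilon_i)$ cannot be trivialized by globally reorienting any collection of components of $F_x^U$ and $F_x^V$. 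The key step is to choose the orientation on $F_{x_\alpha}^U$ (unique up to global flip, by its connectedness and 2-sidedness) so that its restriction to $W$ matches that of $F_x^U$ at every $w_i$. This is possible because orientation transport from one $w_i$ to another within $F_{x_\alpha}^U$ agrees with transport along the image path in $F_x^U$: the two surfaces are locally identical away from $\alpha$, and the 2-sidedness of $F_x^U$ forces the re-identification $\alpha^+ \sim \alpha^-$ to preserve rather than reverse orientation across $\alpha$. With this choice, the assignment $(\epsilon_i)$ for $F_{x_\alpha}$ equals that for $F_x$ and carries the same non-trivializability obstruction, so $F_{x_\alpha}$ is 1-sided.

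The hard part will be justifying this matching-orientation claim in the second case. The connectedness of $F_{x_\alpha}^U$ is essential here: were $F_{x_\alpha}^U$ disconnected, each of its components could be independently reoriented, so $(\epsilon_i)$ could be flipped on a subset of the components of $W$, potentially collapsing the non-trivializability obstruction and producing a 2-sided $F_{x_\alpha}$.
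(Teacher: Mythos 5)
Your proof is correct, and it starts from the same decomposition $F_x=F_x^U\cup F_x^{S^2\backslash\backslash U}$ with the same first case (an orientation-reversing loop in the outside piece is disjoint from $\alpha$ and survives the cut) as the paper. Where you genuinely diverge is in the main case, where all pieces are 2-sided: the paper produces an explicit M\"obius-band core $\rho_0\cup\rho_1$ with $\rho_0$ outside $U$ and $\rho_1\subset F_x^U$, uses connectedness of $F_x^U\backslash\backslash\alpha$ to reroute $\rho_1$ to an arc $\rho_2$ missing $\alpha$, and uses 2-sidedness of $F_x^U$ to conclude that $\rho_1\cup\rho_2$ cores an annulus, so $\rho_0\cup\rho_2$ still cores a M\"obius band in $F_{x_\alpha}$. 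You instead phrase the obstruction cohomologically: orientations of the pieces compared on the shared capping disks give a sign assignment whose non-trivializability is equivalent to 1-sidedness, and since the orientation of $F_x^U$ simply restricts to one on $F_x^U\backslash\backslash\alpha$ without changing any of these signs, and the nerve of the decomposition is unchanged (this is exactly where connectedness of $F_{x_\alpha}^U$ enters, as you correctly flag), the obstruction persists. The two arguments are dual --- yours manipulates the orientation cocycle, the paper's an explicit orientation-reversing curve --- and they use the two hypotheses (connectedness of $F_{x_\alpha}^U$, 2-sidedness of $F_x^U$) in parallel ways. A small advantage of your version is that it never needs to put an orientation-reversing loop into ``bridge position'' with one arc in each piece, a step the paper asserts without comment; the price is the bookkeeping you already do, namely checking that the overlap $F_x^U\cap F_x^{S^2\backslash\backslash U}$ consists precisely of the full capping disks of the state circles crossing $\partial U$, so that each comparison really is a single sign.
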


\begin{proof}
Because $\alpha\subset U$, we have:
\[F_{x_\alpha}=\left(F_{x_\alpha}^{S^2\backslash\backslash U}\right)\cup\left(F_{x_\alpha}^U\right)=\left(F_{x}^{S^2\backslash\backslash U}\right)\cup\left(F_{x}^{U}\backslash\backslash\alpha\right).\]
Thus, if $F_x^{S^2\backslash\backslash U}$ is 1-sided, the result follows immediately.  Otherwise, there exist properly embedded arcs $\rho_0\subset F_x^{S^2\backslash\backslash U}$ and $\rho_1\subset F_x^{U}$ with the same endpoints such that $\rho_0\cup\rho_1$ is the core of a mobius band in $F_x$.  Since $F_x^U\backslash\backslash\alpha$ is connected, there is a properly embedded arc $\rho_2\subset F_x^U\backslash\backslash\alpha$ such that $\rho_1\cap\rho_2=\partial\rho_1=\partial\rho_2$. The fact that $F_x^U$ is 2-sided implies that $\rho_1\cup\rho_2$ is the core of an annulus in $F_x$.  Therefore, $\rho_0\cup\rho_2$ is the core of a mobius band in $F_x\backslash\backslash\alpha$. 
\end{proof}

Say that a tangle subsurface $F_x^U$ is {\it minimal} if, for any tangle subsurface $F_x^{U'}$ with $U'\subset U$, every state arc in $U'$ is also in $U$. Note that every tangle subsurface $F_x^U$ contains a minimal one. 

\begin{obs}\label{O:PrimeToNot2}
If $F_x$ is prime and $\alpha\subset F_x$ is a $u^-$ type vertical arc such that $F_{x_\alpha}$ is essential and non-prime, then each boundary connect summand $F_{x_\alpha}^U$ of $F_{x_\alpha}$ corresponds to a minimal tangle subsurface $F_x^U$ of $F_x$. 
\end{obs}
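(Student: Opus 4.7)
The plan is to take a boundary connect summand $F_{x_\alpha}^U$ and perturb $\partial U$ slightly inside the crossing disk $\nu c$ of $\alpha$ so that the same region, after this perturbation, defines a tangle subsurface of $F_x$, and then to pass to a minimal tangle subsurface inside it.

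First, I would use Observation \ref{O:PrimeToNot} to record the structure of $U$: it is a disk or an annulus, each component of $\partial U$ is disjoint from state arcs of $x_\alpha$ and meets $x_\alpha$ transversally in exactly two points (on state circles), and each component crosses the exceptional state arc $\beta=x\setminus x_\alpha$. Because $x$ and $x_\alpha$ agree outside $\nu c$, the only way $F_x^U$ can fail the definition of a tangle subsurface is through intersections of $\partial U$ with $\beta$ inside $\nu c$.

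Next, I would push $\partial U$ off $\beta$: at each point of $\partial U\cap\beta$, reroute $\partial U$ along $\beta$ past one of its endpoints, which sits on a state-circle arc of $x$ inside $\nu c$. This trades one intersection with $\beta$ for two intersections with that state-circle arc, and the detour stays disjoint from all state arcs of $x$. A local analysis at $c$, using primeness of $F_x$ to exclude extraneous crossings between the entry and exit points of $\partial U$ into $\nu c$, shows each component of $\partial U$ can be taken to cross $\beta$ exactly once, so that after perturbation $\partial U$ meets $x$ transversally in four points per component and avoids all state arcs of $x$. Connectivity of $F_x^U$ follows from connectivity of $F_{x_\alpha}^U$ together with the fact that reattaching $\alpha\subset U$ through the crossing band containing $\beta$ cannot disconnect; non-simple-connectedness follows because the restoration of $\alpha$ satisfies $\beta_1(F_x^U)=\beta_1(F_{x_\alpha}^U)+1\geq 2$.

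Finally, since every tangle subsurface contains a minimal one (as noted immediately before Observation \ref{O:PrimeToNot2}), I would pass from our $F_x^U$ to a minimal tangle subsurface inside it; this minimal subsurface is the one canonically associated to the given boundary connect summand. The main obstacle I anticipate is the local bookkeeping at $\nu c$: verifying that the push-off of $\partial U$ off $\beta$ is unambiguous (so the perturbed boundary remains a simple closed curve and $U$ stays a disk or annulus), that $\partial U$ can indeed be arranged to cross $\beta$ exactly once per component, and that the resulting four intersection points with state circles of $x$ are pairwise distinct and transverse.
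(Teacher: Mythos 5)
The paper gives no written proof of this observation (it is justified only by the reference to Observation \ref{O:PrimeToNot} and Figure \ref{Fi:PrimeToNot}), so your attempt to make the perturbation explicit is welcome, and your treatment of the case where $U$ is a disk is essentially the intended argument. The genuine gap is the annulus case, which you correctly record at the outset (Observation \ref{O:PrimeToNot} allows $U$ to be an annulus, which occurs whenever $K_\alpha$ has three or more prime summands) but then handle incorrectly. Your perturbation reroutes each component of $\partial U$ past an endpoint of $\beta$, so that each component ends up meeting $x$ in four points; for an annulus that is eight points in total, whereas the definition of a tangle subsurface requires $\partial U$ to meet $x$ in four points altogether. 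The correct adjustment in the annulus case is different in kind: the subarc $\beta\cap U$ runs from one component of $\partial U$ to the other through the annulus, and one should delete a regular neighborhood of this subarc from $U$. This converts $U$ into a disk whose boundary meets $x$ in exactly the four original points --- the detour runs around an interior subarc of $\beta$, not around an endpoint, so it crosses no state circles --- and it is this disk that carries the tangle subsurface.

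Two smaller points. First, your bookkeeping $\beta_1(F_x^U)=\beta_1(F_{x_\alpha}^U)+1$ indicates that you are pushing $\partial U$ so that $\beta$ (hence the crossing band at $\alpha$) ends up inside $U$. This is consistent with the four-point count in the disk case, but it is the opposite of what the paper needs downstream: in Corollary \ref{C:2SidedTangle} the region $U$ is adjusted to \emph{exclude} $\beta$, precisely so that a 2-sided summand $F_{x_\alpha}^U$ yields a 2-sided tangle subsurface $F_x^U=F_{x_\alpha}^U$; gluing the crossing band back in can make the subsurface 1-sided, and it also forces the regions associated to distinct summands to overlap at $\beta$. Second, passing afterward to ``some minimal tangle subsurface inside'' is weaker than what is used later: Corollary \ref{C:2SidedTangle} and Lemma \ref{L:key3} need the minimal tangle subsurface to be the 2-sided one containing the $u^-$ type vertical arc supplied by Lemma \ref{L:2SidedTangle}, and those properties are not automatically inherited by an arbitrary smaller minimal tangle subsurface; if you shrink, you must track them.
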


(This extends Observation \ref{O:PrimeToNot}; see Figure \ref{Fi:PrimeToNot}.)

\begin{obs}\label{O:minimal0}
If $F_x^U$ is a minimal tangle subsurface of $F_x$, then:
\begin{itemize*}
\item no vertical arc $\alpha\subset F_x^U$ is parallel through $F_x^U$ to $\partial F_x^U$, and
\item for any properly embedded arc $\delta\subset U$ which intersects $x$ transversally in two points, both on the same state circle of $x$, all of the non-nugatory state arcs of $x$ in $U$ lie on the same side of $\delta$.
\end{itemize*}
\end{obs}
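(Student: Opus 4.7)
Both bullets will be proved by contradiction: assuming the conclusion fails, I construct a tangle subsurface $F_x^{U'}$ with $U'\subsetneq U$ that strictly drops a state arc of $F_x^U$, contradicting the minimality of $F_x^U$.

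For the first bullet, suppose a vertical arc $\alpha\subset F_x^U$ cobounds a disk $\Delta\subset F_x^U$ with an arc of $\partial F_x^U$, chosen innermost so that $\Delta$ contains no other vertical arcs. Then $\pi(\Delta)\subset S^2$ is a disk adjacent to $\partial U$ whose interior meets $x$ only in the state arc $\beta$ at $\alpha$'s crossing, and whose boundary consists of a subarc of $\partial U$ together with arcs on incident state circles. Push $\partial U$ across $\pi(\Delta)$ to produce $U'\subsetneq U$ whose boundary still meets $x$ transversely in four points and remains disjoint from state arcs, while $F_x^{U'}$ is obtained from $F_x^U$ by deleting the single boundary-adjacent crossing band at $\alpha$. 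Deleting a boundary-adjacent band from a connected, non-simply-connected surface keeps both properties (otherwise $F_x^U$ itself would be a disk with a single band attached, contradicting the three-condition definition of a tangle subsurface). Thus $F_x^{U'}$ is a tangle subsurface strictly contained in $U$ lacking $\beta$, contradicting minimality.

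For the second bullet, suppose $\delta\subset U$ meets $x$ transversely in two points $p_1,p_2$ on a common state circle $C$, with non-nugatory state arcs of $x$ in $U$ on both sides of $\delta$. The points $p_1,p_2$ cut $C\cap U$ into arcs with at least one outermost subarc $C_0$ on one side of $\delta$ whose interior contains no state-arc endpoints. Surger the middle segment of $\delta$ along a pushoff of $C_0$ to obtain an arc $\delta'\subset U$ disjoint from $x$. A short case analysis on how the endpoints of $\delta$ partition the four points of $\partial U\cap x$, combined with the parity of $|\partial U\cap C|$, shows that $\delta'$ together with an appropriate portion of $\partial U$ bounds a region $U'\subsetneq U$ on one side of $\delta$ whose boundary meets $x$ transversely in four points and avoids all state arcs. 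The surface $F_x^{U'}$ then contains only the state arcs of $F_x^U$ on the chosen side of $\delta$, strictly omitting at least one non-nugatory state arc on the other side, so $F_x^{U'}$ is again a tangle subsurface contradicting the minimality of $F_x^U$.

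The principal obstacle is in the second bullet: controlling the local modification from $\delta$ to $\delta'$ and matching it with $\partial U$ so that the resulting region $U'$ simultaneously meets $x$ in exactly four points and gives a $F_x^{U'}$ that is connected and non-simply-connected. The essentiality of $F_x$ (from the standing hypothesis at the start of \textsection\ref{S:tangle}) together with the assumption that non-nugatory state arcs appear on both sides of $\delta$ should rule out the degenerate configurations in which $F_x^{U'}$ collapses to a disk, making the case analysis uniform.
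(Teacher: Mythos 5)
The paper gives no proof of this Observation---it is presented as immediate from the definition of minimality---so there is no written argument to compare against; your strategy of contradicting minimality by producing a strictly smaller tangle subsurface $F_x^{U'}$ is surely the intended one. The genuine gap is that, in both bullets, you assert or defer precisely the condition that carries all of the content: that the shrunken region $U'$ can be chosen with $|\partial U'\cap x|=4$. In the first bullet, once you pass to an innermost parallelism disk, $\Delta$ consists of half of the crossing band at $\alpha$ together with the entire capping disk of a state circle $C_1$ that meets no other state arc of $x^U$; pushing $\partial U$ across this region deletes the $|\partial U\cap C_1|\in\{0,2,4\}$ intersection points on $C_1$ and creates two new ones on the circle at the other end of $\beta$. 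The count lands on four only when $|\partial U\cap C_1|=2$: if $C_1\subset\operatorname{int}(U)$ you get six points (and this configuration is excluded only because it forces $C_1$ to meet a single state arc of $x$, i.e.\ a nugatory crossing---reducedness is not among the standing assumptions of \textsection\ref{S:tangle}), and if $|\partial U\cap C_1|=4$ the naive push gives either two points or a disconnected $F_x^{U'}$. Your parenthetical reason that $F_x^{U'}$ stays connected and not simply connected is also wrong as stated: a disk with one band attached can be a M\"obius band, which \emph{does} satisfy the definition of a tangle subsurface; the correct reason is that the non-disk component of $F_x^U\backslash\backslash\alpha$ has first Betti number equal to $\beta_1(F_x^U)\geq 1$.

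In the second bullet the same issue is worse, and you acknowledge it only to wave it off with ``a short case analysis\ldots shows'' and ``should rule out.'' Since $\delta'$ is disjoint from $x$, any region cut off by $\delta'$ meets $x$ exactly in the points of $\partial U\cap x$ that it inherits, and the endpoints of $\delta$ split those four points into even groups $k$ and $4-k$. Your construction can work only when one side receives all four points; in the $(2,2)$ case no side of $\delta'$ does, and one must instead keep the original $\delta$ (crossing $C$ twice) as part of $\partial U'$, or argue differently---after which connectivity, non-simple-connectivity, and the loss of a non-nugatory state arc still have to be verified for the chosen side. That case analysis \emph{is} the proof of the observation; as written, your proposal correctly names the obstacle but does not overcome it.
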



\subsection{Properties of 2-sided tangle subsurfaces}

\begin{lemma}\label{L:2SidedTangle}
Suppose that $F_x^U$ is a prime 2-sided tangle subsurface of $F_x$;
that when $D$ is oriented, both points of $D\cap\partial U$ where $D$ points out of $U$ lie on the same state circle; 
and that, for some $u^-$ type vertical arc $\alpha\subset F_x$, the orientation on one of the two strands of $D\cap U$ is reversed in $D_\alpha\cap U$.
Then $F_x^U$ contains a $u^-$ type vertical arc.
\end{lemma}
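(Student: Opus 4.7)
I argue by contradiction. Suppose $F_x^U$ contains no $u^-$ vertical arc. Then every state arc of $x^U$ is Seifert-type with respect to $D$'s orientation, so the smoothed tangle $(D\cap U)_{x^U}$ consists of two coherently oriented arcs, each pairing an in-point of $\partial U$ with an out-point.

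The first step is to use the hypothesis that both out-points $p_j$ and $p_l$ lie on the same state circle $C$ of $x$. Since each tangle arc pairs one in-point with one out-point and the two out-points lie on distinct tangle arcs, $C$ must contain both tangle arcs. Counting boundary intersections forces $C\cap\partial U$ to consist of all four boundary points: so $C\cap U$ is the disjoint union of the two tangle arcs, and $C$ is completed by two external arcs in $S^2\setminus U$, joining the four boundary points in one of the patterns compatible with $C$ being a single loop.

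Next I would use the reversal hypothesis. Write $D=R\cup R_2\cup\{c_\alpha\}$ with $R$ the reversed arc of the $\alpha$-splice. The hypothesis $s\subset R$, $s'\cap R=\varnothing$ gives $\partial s\subset R$ and $\partial s'\subset R_2$. A short case analysis over the two possible strand pairings $\{s,s'\}$ and the two possible tangle-arc pairings identifies which two of the four boundary points lie on $R$ and which on $R_2$, and from this determines the $R/R_2$-type of each external arc of $C$.

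The final and hardest step is to derive the contradiction. In the generic sub-cases, both external arcs of $C$ must cross between $R$ and $R_2$, which forces each to pass either through the splice $\alpha$ at $c_\alpha$ or through a non-Seifert smoothing at some $R$-$R_2$ crossing outside $U$. Combining this with the parity constraint that the number of $u^-$ arcs along $C$ must be even (so that $D$'s orientation closes up coherently on $C$), together with primality of $F_x^U$, essentialness of $F_x$, and the alternating structure of $D$, I expect an inconsistency to emerge with the hypothesis that $C$ is a single state circle containing both out-points. The delicate point is ruling out the special sub-cases in which one external arc of $C$ can connect two points on the same side ($R$-$R$ or $R_2$-$R_2$); here I anticipate the primality of $F_x^U$ to be the essential ingredient, since it prevents the tangle from admitting a further decomposition that would make such configurations compatible with the remaining hypotheses.
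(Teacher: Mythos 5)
Your proposal stops short of a proof: the entire content of the lemma lives in your ``final and hardest step,'' and there you only write that you ``expect an inconsistency to emerge'' and ``anticipate'' which hypothesis will be essential. Nothing before that point is contradictory on its own --- the configuration you describe (a state circle $C$ consisting of the two coherently oriented tangle arcs plus two external arcs, with the standard parity constraint that orientation reversals along $C$ come in pairs) is perfectly consistent, so no amount of case analysis over strand pairings will close the argument as sketched. The telltale sign is that your outline never uses the hypothesis that $F_x^U$ is \emph{2-sided}, and that hypothesis is load-bearing: for example, a twist region between the two strands all of whose crossings are Seifert-type with respect to $D$ satisfies everything in your setup, and it is only the 2-sidedness of $F_x^U$ that rules it out. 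A proof that does not invoke the hypothesis that makes the statement true cannot be completed along the proposed lines.

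The paper's argument is direct rather than by contradiction, and much shorter. The condition on the out-points guarantees that the underlying diagrams of $x^U$ and $x_\alpha^U$ are \emph{knot} diagrams; 2-sidedness of $F_x^U$ then forces $x_\alpha^U$ to be the Seifert state of its diagram, i.e., every smoothing in $U$ is Seifert-type with respect to the orientation of $D_\alpha$. Since passing from $D$ to $D_\alpha$ reverses exactly one of the two strands of $D\cap U$, Seifert-type and non-Seifert-type are exchanged precisely at crossings \emph{between} the two strands; hence every such crossing carries a $u^-$ type smoothing in $x^U$. Finally, primality of $F_x^U$ (in particular, that it is connected but not simply connected) forces the two strands to cross at least once. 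If you want to salvage your approach, the missing idea is exactly this local observation --- that reversing one strand flips the Seifert-type of inter-strand crossings and of no others --- combined with applying Observation \ref{O:2Sided} to the closed-up diagram underlying $x^U$ rather than reasoning only about the single state circle $C$.
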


Figure \ref{Fi:2SidedTangle} illustrates the situation.

\begin{figure}[b!]
\begin{center}
\includegraphics[width=5in]{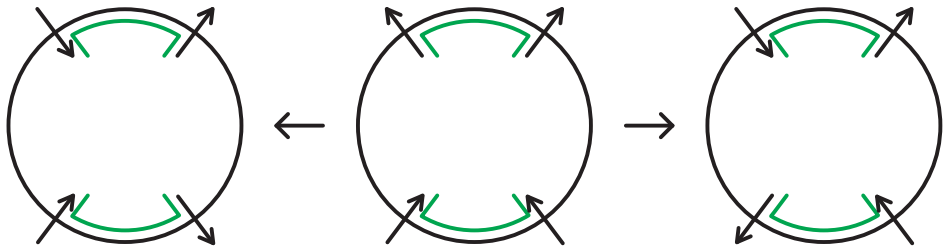}
\caption{The situation in Lemma \ref{L:2SidedTangle}: $F_x^U$ (center), the two possibilities for $F_{x_\alpha}^U$ (left, right).}\label{Fi:2SidedTangle}
\end{center}
\end{figure}

\begin{proof}
The fact that both points of $D\cap\partial U$ where $D$ points out of $U$ lie on the same state circle implies that the underlying diagrams for both $x^U$ and $x_\alpha^U$ represent {\it knots}, and that $x_\alpha^U$ is the Seifert state for its diagram. Thus, any crossing between the two strands of $D\cap U$ must have a $u^-$ type smoothing in $x^U$. 
Moreover, these two strands must cross, since $F_x^U$ is prime, in particular connected but not simply connected. 
Therefore, $F_x^U$ must contain a $u^-$ type vertical arc.
\end{proof}

In particular, using Observations \ref{O:PrimeToNot} and \ref{O:PrimeToNot2} together with Lemma \ref{L:2SidedTangle}:

\begin{cor}\label{C:2SidedTangle}
Suppose that $F_x$ is prime and $F_{x_\alpha}^U$ is a 2-sided boundary connect summand of $F_{x_\alpha}$. If necessary, adjust $U$ so that it does not intersect the state arc $\beta=x\setminus x_\alpha$ or any other state arcs that join the same two state circles that $\beta$ does. Then $F^U_x$ is a 2-sided minimal tangle subsurface in $F_x$ which contains a $u^-$ type vertical arc.
\end{cor}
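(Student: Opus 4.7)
The plan is to combine Observations \ref{O:PrimeToNot} and \ref{O:PrimeToNot2} with Lemma \ref{L:2SidedTangle}. First, since $F_x$ is prime while $F_{x_\alpha}$ contains the boundary connect summand $F_{x_\alpha}^U$, I would invoke Observation \ref{O:PrimeToNot} to extract that $U$ is a disk or annulus, each component of $\partial U$ meets $\beta=x\setminus x_\alpha$, and (once $D$ is oriented) the two points of $D\cap\partial U$ where $D$ points out of $U$ lie on a single state circle, while the orientation of one of the two strands of $D\cap U$ is reversed in $D_\alpha\cap U$.

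Next, I would carry out the adjustment. Near $\beta$, and near each other state arc of $x$ joining the same pair of state circles as $\beta$, I would isotope $\partial U$ off the offending state arc, choosing the direction of each push so that the arc lands in the exterior of $U$. After these isotopies $\beta$ itself is outside $U$, so $x^U=x_\alpha^U$ and hence $F_x^U$ coincides with $F_{x_\alpha}^U$ as a subsurface; moreover $\partial U$ is now disjoint from all state arcs of $x$. Each local push either eliminates a pair of intersections with a state arc outright or else trades them for a pair of intersections with the state circle at the relevant endpoint, so $U$ remains topologically a disk or annulus, and the orientation data from Observation \ref{O:PrimeToNot} survives because it only depends on $D\cap U$ in the interior of $U$.

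From here the conclusion follows by assembly. Observation \ref{O:PrimeToNot2} gives that $F_x^U$ is a minimal tangle subsurface of $F_x$. The surface $F_x^U=F_{x_\alpha}^U$ is 2-sided by hypothesis, and it is prime as a tangle subsurface since any boundary connect summand of $F_x^U$ is equivalently one of $F_{x_\alpha}^U$, which is itself prime by definition. Together with the orientation conditions retained from Observation \ref{O:PrimeToNot}, these verify the hypotheses of Lemma \ref{L:2SidedTangle}, which therefore produces a $u^-$ type vertical arc in $F_x^U$.

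The main obstacle I anticipate is the adjustment step: verifying that a single isotopy of $\partial U$ can simultaneously preserve primality, minimality, 2-sidedness, and the orientation conditions. This works out because the isotopy is purely local near $\beta$ and its parallels, leaving the interior of $U$ untouched; the only genuine choice is the direction of each push, and we consistently choose the one that expels the state arc from $U$, which in turn is what keeps $F_x^U$ equal to $F_{x_\alpha}^U$ (and hence 2-sided) rather than acquiring the crossing band for $\beta$.
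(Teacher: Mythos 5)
Your proposal is correct and follows the paper's route exactly: the paper derives this corollary in one line ``using Observations \ref{O:PrimeToNot} and \ref{O:PrimeToNot2} together with Lemma \ref{L:2SidedTangle},'' and your argument simply fills in the details of the boundary isotopy that expels $\beta$ (and its parallel arcs) from $U$ so that $F_x^U=F_{x_\alpha}^U$ becomes a $2$-sided minimal tangle subsurface satisfying the hypotheses of Lemma \ref{L:2SidedTangle}.
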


\begin{lemma}\label{L:key3}
Suppose that $F_x$ contains a 2-sided minimal tangle subsurface $F_x^U$ which contains a $u^-$ type vertical arc $\alpha$. Then $F_{x_\alpha}$ is 1-sided, and $K_\alpha$ is prime.
\end{lemma}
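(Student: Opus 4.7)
I plan to prove the two conclusions separately, each by contradiction, leveraging the minimality of $F_x^U$ via Observation \ref{O:minimal0}. For the 1-sidedness of $F_{x_\alpha}$, I appeal to Proposition \ref{P:cut1}: since $F_x^U$ is a 2-sided tangle subsurface containing $\alpha$, only the connectivity of $F_{x_\alpha}^U$ remains to check. Cutting $F_x^U$ along $\alpha$ is equivalent to deleting the state arc $\beta$ at $\alpha$ from the state graph $G(x^U)$, so $F_{x_\alpha}^U$ is disconnected exactly when $\beta$ is a bridge of $G(x^U)$. The 2-sidedness of $F_x^U$ makes $G(x^U)$ bipartite (Observation \ref{O:2Sided}), placing the endpoints $v_1,v_2$ of $\beta$ in opposite classes. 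Assuming for contradiction that $\beta$ is a bridge, with components $H_1 \ni v_1$ and $H_2 \ni v_2$ of $G(x^U)\setminus\beta$ disjoint in the planar embedding inside $U$, I will construct a properly embedded arc $\delta \subset U$ whose endpoints lie on $\partial U$, crossing $x$ transversally at exactly two points on a single state circle adjacent to $\beta$, such that non-nugatory state arcs of $x$ in $U$ lie on both sides of $\delta$. This contradicts the second bullet of Observation \ref{O:minimal0}.

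For the primeness of $K_\alpha$, I again argue by contradiction. If $K_\alpha$ is not prime, Observation \ref{O:PrimeToNotKnot} provides a simple closed curve $\gamma \subset S^2$ intersecting $D$ transversally at $c$ and at two other points on edges not incident to $c$, with both disks of $S^2 \setminus \gamma$ containing non-nugatory crossings of $D_\alpha$. Since $\alpha \subset U$, the crossing $c$ lies in $U$. After isotoping $\gamma$ to be transverse to all state arcs, I will use the intersection $\gamma \cap U$ to carve out a sub-region $U' \subsetneq U$ whose boundary meets $x$ in four points, disjoint from state arcs, so that $F_x^{U'}$ is a tangle subsurface strictly smaller than $F_x^U$. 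The minimality of $F_x^U$ is then contradicted.

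The main obstacle in both parts is the geometric construction inside $U$. In part 1, the arc $\delta$ must thread around $\beta$'s endpoint on a single state circle, avoid other intersections with $x$, and separate non-nugatory state arcs; this requires case analysis based on how the four punctures of $\partial U \cap x$ distribute between $H_1$ and $H_2$, and on whether $\beta$ itself is nugatory in $x$. In part 2, the challenge is ensuring that the pieces of $\gamma$ used for the construction of $U'$ actually lie inside $U$ (isotoping if necessary using the alternating structure of $D$) and that the resulting $U'$ genuinely yields a smaller tangle subsurface rather than a degenerate one.
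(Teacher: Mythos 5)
Your framework for the 1-sidedness claim matches the paper's: reduce to Proposition \ref{P:cut1} when $F_x^U\backslash\backslash\alpha$ is connected, and derive a contradiction from Observation \ref{O:minimal0} when it is not. But the crux of the disconnected case is exactly the step you defer to ``case analysis'': showing that non-nugatory state arcs of $x$ lie on \emph{both} sides of the separating arc $\delta$. The paper resolves this not by analyzing how the four punctures of $\partial U\cap x$ distribute between $H_1$ and $H_2$ (which by itself says nothing about non-nugatory arcs), but by invoking the \emph{first} bullet of Observation \ref{O:minimal0}: since $\alpha$ is not parallel through $F_x^U$ to $\partial F_x^U$, neither component of $F_x^U\setminus\alpha$ is simply connected, so each component of $x_\alpha\cap U$ contains a non-nugatory state arc. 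That is the missing idea in your plan; the arc $\delta$ itself comes almost for free ($\delta$ separates the two components of $x_\alpha\cap U$, meets $\beta$ once because $x\cap U$ is connected, and can be pushed off $\beta$ past one endpoint to meet a single state circle twice, as the second bullet requires). The bipartiteness of the state graph plays no role.

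For primeness your route is genuinely different from the paper's, and it is where the more serious gap lies. To contradict minimality directly you must produce $U'\subset U$ for which $F_x^{U'}$ is \emph{connected and not simply connected} while omitting some state arc of $U$, and nothing in Observation \ref{O:PrimeToNotKnot} guarantees either property: the non-nugatory crossings of $D_\alpha$ promised on each side of $\gamma$ need not yield a non-simply-connected subsurface on the side you keep, connectivity of $F_x^{U'}$ is unverified, and the boundary pieces of $\gamma$ inside $U$ cross $\beta$, so they must be perturbed before $\partial U'$ can be disjoint from state arcs. The paper sidesteps all of this by reusing the part-one machinery: since $K$ is prime, the connect-sum curve for $K_\alpha$ must cross $\beta$, yielding a properly embedded arc $\delta\subset U$ meeting $x$ only on $\beta$; the first bullet of Observation \ref{O:minimal0} again places non-nugatory state arcs on both sides of $\delta$, and the second bullet forbids this. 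I recommend replacing the $U'$ construction with this arc-based argument.
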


\begin{proof}
If $F_x^U\backslash\backslash \alpha$ is connected, then $F_{x_\alpha}$ is 1-sided, by Proposition \ref{P:cut1}.  Assume instead that $F_x^U\backslash\backslash\alpha$ is not connected. Then $x_\alpha\cap U$ is not connected, so there is a properly embedded arc $\delta\subset U$ which separates the two components of $x_\alpha\cap U$. The fact that $x\cap U$ is connected implies that $|\delta\cap \beta|=1$, where $\beta$ is the state arc corresponding to $\alpha$.  
The first part of Observation \ref{O:minimal0} implies that $\alpha$ is not parallel through $F_x$ to $\partial F_x^U$. Hence, neither component of $F_x^U\setminus \alpha$ is simply connected.  Thus, each component of $x_\alpha\cap U$ contains a non-nugatory state arc.  This contradicts the second part of Observation \ref{O:minimal0}.  In all cases, therefore, $F_{x_\alpha}$ is 1-sided.
 
Assume for contradiction that $K_\alpha$ is not prime.  Then
there is a simple closed curve $\gamma\subset S^2$ which intersects $D_\alpha$ transversally in two points, neither of them crossings, such that both components of $D_\alpha\setminus\gamma$ contain non-nugatory crossings of $D_\alpha$. 
The assumption that $K$ is prime implies that $\gamma$ must intersect $\beta$. Hence, there is a properly embedded arc $\delta\subset U$ which intersects $x$ in a single point, which lies on $\beta$.  Again, the first part of Observation \ref{O:minimal0} provides non-nugatory state arcs in both components of $x_\alpha\cap U$, contradicting the second part of Observation \ref{O:minimal0}. Therefore, $K_\alpha$ is prime.
\end{proof}


\section{Technical lemmas}\label{S:3}

Throughout \textsection\ref{S:3}, $D$ will be a reduced alternating diagram of a prime knot $K$, 
and $F_x$ will be a 1-sided state surface from $D$ with $\beta_1(F_x)=cc(K)$.\footnote{Such $F_x$ exists by Theorem \ref{T:AK}; sometimes this surface will be arbitrary, subject to these conditions; other times, we will choose a particular surface $F_x$ of this type.}
Further, partitioning the vertical arcs in $F_x$ as $\mathscr{A}_{x,S}\cup\mathscr{A}_{x,u}$ as in Observation \ref{O:2Sided}, $\alpha\in \mathscr{A}_{x,u}$ will be a $u^-$ type vertical arc in $F_x$.\footnote{Such $\alpha$ exists by Observation \ref{O:2Sided}; as with $F_x$, we will sometimes take $\alpha$ to be arbitrary, and other times will we choose $\alpha$.}
We will denote $F_x\backslash\backslash\alpha=F_{x_\alpha}$ and $\partial F_{x_\alpha}=K_\alpha$.  

Note that $x=x_\alpha\cup\beta$, where $\beta\subset x$ is the state arc in that corresponds to the vertical arc $\alpha\subset F_x$, and that cutting $F_x$ at $\alpha$ corresponds to performing a $u^-$ splice on $D$ at the associated crossing. This splice yields the underlying diagram $D_\alpha$ for $x_\alpha$.  Note also that $D_\alpha$ is alternating, but not necessarily prime or reduced.

\subsection{Overview of cases}\label{S:30}
The key step in Ito-Takimura's proof that $cc(K)\leq u^-(D)$ involves building up more complex state surfaces from simpler ones, often by gluing on crossing bands in a way that corresponds to undoing a $u^-$ type splice. The key step in proving the reverse inequality is basically the opposite.  Namely, the key is to show that there exist $F_x$ and $\alpha$ such that $F_{x_\alpha}$ is 1-sided with $\beta_1(F_{x_\alpha})=cc(K_\alpha)$, such that $K_\alpha$ either is prime or satisfies the condition (\ref{E:dagger}) from Corollary \ref{C:ConnSum}.

This situation varies mainly according to whether or not $\beta_1(K)=cc(K)$. Subsection \ref{S:320} addresses the case $\beta_1(K)<cc(K)$. For each of the states $x$ which differs from the Seifert state $y$ at a single crossing, $F_x$ has a single $u^-$ type vertical arc. Also $\beta_1(F_x)=cc(K)=\beta_1(K)+1$. Lemma \ref{L:key321} establishes that, for at least one of these states $x$, $F_{x_\alpha}$ is 1-sided with $\beta_1(F_{x_\alpha})=cc(K_\alpha)$, and $K_\alpha$ is prime. 

Subsection \ref{S:3new} addresses the case $\beta_1(K)=cc(K)$. Given a 1-sided $F_x$ from $D$ with $\beta_1(F_x)=cc(K)$, Lemma \ref{L:key3} states that, if $F_x$ has a 2-sided minimal tangle subsurface which contains an arc $\alpha\in\mathscr{A}_{x,u}$, then $F_{x_\alpha}$ is 1-sided with $\beta_1(F_{x_\alpha})=cc(K_\alpha)$, and $K_\alpha$ is prime. Otherwise, every 2-sided minimal tangle subsurface in $F_x$ contains only Seifert-type vertical arcs.  (This includes the case of the knot $9_{10}$.) After some setup, this case follows easily from Corollary \ref{C:2SidedTangle}, using the condition (\ref{E:dagger}) for $K_\alpha$ and an associated condition (\ref{E:star}) for $F_{x_\alpha}$.

\subsection{Alternating knots with $\beta_1(K)<cc(K)$}\label{S:320}

In addition to the assumptions stated at the beginning of \textsection\ref{S:3}, 
assume throughout \textsection\ref{S:320} that $\beta_1(K)<cc(K)$, and that $y$ is the Seifert state of $D$. Then the associated Seifert surface satisfies $\beta_1(F_y)=\beta_1(K)=cc(K)-1>0$.

\begin{figure}[b!]
\labellist
\small\hair 4pt
\pinlabel {$y$} [c] at 155 175
\pinlabel {$z$} [c] at 640 175
\endlabellist
\centerline{\includegraphics[width=4in]{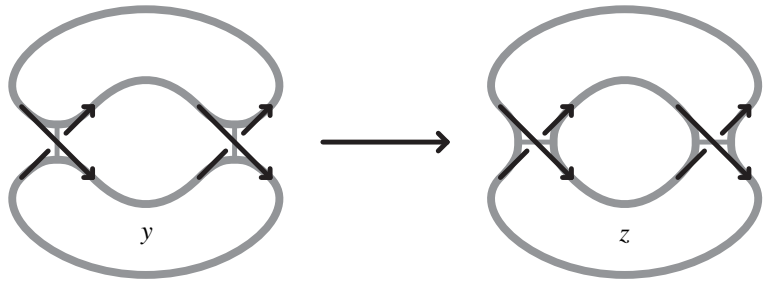}}
\caption{Proposition \ref{P:nohopf} states that if a Seifert surface $F_y$ for an alternating knot $K$ satisfies $\beta_1(F_y)<cc(K)$, then no two state arcs in $y$ join the same two state circles.}
\label{Fi:NoHopf}
\end{figure}

\begin{prop}\label{P:nohopf}
No two state arcs in $y$ join the same two state circles.
\end{prop}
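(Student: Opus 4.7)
My plan is to argue by contradiction. Suppose two state arcs $\beta_1$ and $\beta_2$ in the Seifert state $y$ join the same pair of state circles $C_1, C_2$. I will construct a 1-sided state surface $F_z$ spanning $K$ with $\beta_1(F_z)=cc(K)-1$, contradicting the definition of $cc(K)$. Let $z$ be the state obtained from $y$ by changing both the $\beta_1$ and $\beta_2$ smoothings from Seifert to non-Seifert type, as suggested by the right-hand state $z$ in Figure~\ref{Fi:NoHopf}. Since $z$ has at least one non-Seifert smoothing, $F_z$ is 1-sided by Observation~\ref{O:2Sided}. It remains to show that $\ell(z)=\ell(y)$, for then $\beta_1(F_z)=n+1-\ell(z)=\beta_1(F_y)=cc(K)-1$.

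To establish $\ell(z)=\ell(y)$, I would perform the two smoothing changes in sequence. Changing $\beta_1$'s smoothing first: because $\beta_1$ joins the distinct circles $C_1, C_2$, this merges them into a single circle $C_{12}$, reducing $\ell$ by $1$. In the resulting intermediate state, $\beta_2$ still carries its Seifert smoothing and lies on $C_{12}$, which passes through $\beta_2$'s crossing twice, using both Seifert-jump arcs inside the crossing disk. Because $C_{12}$ is a simple closed curve in $S^2$, the two remaining subarcs of $C_{12}$ --- those lying outside the $\beta_2$ crossing disk and pairing the four endpoints of $\beta_2$'s crossing --- are forced by planarity to realize the unique planar pairing complementary to the Seifert one, namely the non-Seifert pairing. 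Changing $\beta_2$'s smoothing then replaces its two Seifert jumps by jumps realizing exactly the pairing of these outside arcs, which splits $C_{12}$ into two disjoint ``digon'' circles. This restores $\ell$ to $\ell(y)$, completing the verification.

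The main obstacle is the planarity step identifying the pairing of $C_{12}$'s outside arcs. The essential point is that $C_{12}$ is a simple closed curve in $S^2$ passing twice through the $\beta_2$ crossing disk; hence its two outside arcs pair the four crossing endpoints via a planar pairing different from the Seifert one, so it must be the non-Seifert pairing. Once that is in hand, the circle count is immediate, and $F_z$ is a $1$-sided spanning surface for $K$ with $\beta_1(F_z)<cc(K)$, contradicting the minimality of $cc(K)$.
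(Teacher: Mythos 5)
Your proof is correct and follows essentially the same route as the paper: reverse the two smoothings at $\beta_1$ and $\beta_2$ to get a state $z\neq y$ with the same number of state circles as $y$, so that $F_z$ is a $1$-sided spanning surface with $\beta_1(F_z)=\beta_1(F_y)=\beta_1(K)<cc(K)$, a contradiction. The paper simply asserts the circle count, whereas you justify it with the (correct) planarity argument that a single circle meeting the crossing disk of $\beta_2$ in four points must pair those points outside the disk by the non-Seifert pairing; this is a welcome elaboration rather than a different proof.
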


\begin{proof}
If two state arcs in $y$ join the same two state circles, then reversing these two smoothings will produce a state $z\neq y$ with the same number of state circles as $y$. (See Figure \ref{Fi:NoHopf}.) But then the state surface $F_z$ will be 1-sided with $\beta_1(F_z)=\beta_1(F_y)=\beta_1(K)<cc(K)$.
\footnote{A similar argument proves more generally that if {\it any} knot $K$ satisfies $cc(K)>\beta_1(K)$, then any minimal genus Seifert surface for $K$ must have no Hopf band plumbands.}
\end{proof}

\begin{figure}[b!]
\centerline{\includegraphics[width=4.5in]{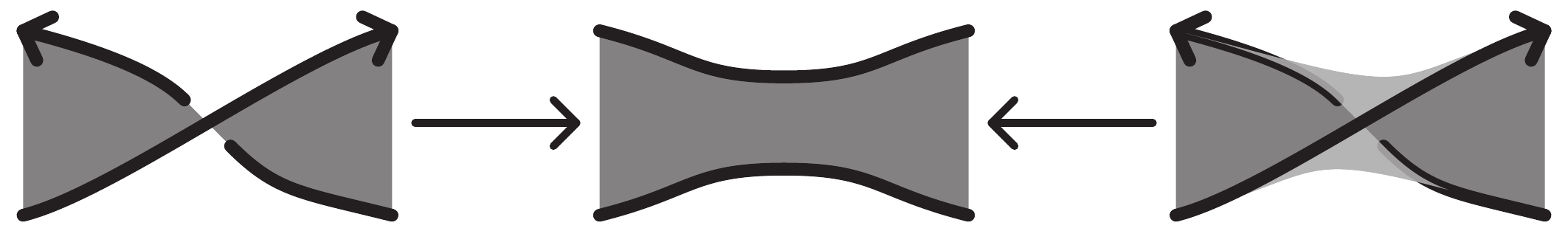}}
\caption{If $F_x$ differs from the Seifert surface $F_y$ at a single crossing $c$, then cutting $F_x$ at $c$ gives the same surface as untwisting $F_y$ at $c$.}
\label{Fi:Untwist}
\end{figure}

Reversing any one smoothing of $y$ produces a non-adequate state $x$ whose associated state surface satisfies $\beta_1(F_x)=\beta_1(F_y)+1=cc(K)$.  There is only one $u^-$ type smoothing in $x$.  Cutting $F_x$ at the associated vertical arc yields the same surface as ``untwisting'' the associated crossing band in $F_y$. See Figure \ref{Fi:Untwist}.

\begin{prop}\label{P:Untwist}
Untwisting $F_y$ at any crossing band gives a 
1-sided state surface $F_w$ from a reduced alternating knot diagram $D'$.
\end{prop}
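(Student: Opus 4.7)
The plan is to identify $F_w$ with the state surface $F_{x_\alpha}$, where $x$ is the state obtained from the Seifert state $y$ by reversing the smoothing at the crossing $c$ whose crossing band we are untwisting, and $\alpha \subset F_x$ is the unique $u^-$ type vertical arc (unique because $x$ has exactly one non-Seifert smoothing). Then $F_w$ is a state surface from the $u^-$ splice diagram $D_\alpha$ with state $x_\alpha$, and I take $D' = D_\alpha$. That $D'$ is a knot diagram follows from $\alpha$ being of type $u^-$, and that $D'$ is alternating follows from the standard observation that splicing a crossing preserves the alternating property.

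To establish that $F_w$ is 1-sided, I would orient $D$ and let $A, B$ denote the two arcs of $D$ obtained by cutting at $c$. The $u^-$ splice reverses the orientation on exactly one of these arcs, say $B$. A crossing $c' \neq c$ has its Seifert/non-Seifert classification in $x_\alpha$ (relative to the orientation of $D_\alpha$) flipped from its status in $y$ if and only if exactly one of its two strands lies on $B$. If no such $c'$ existed, then $A$ and $B$ would share no crossings of $D$, and so $D$ would decompose as a nontrivial diagrammatic connected sum at $c$ (since reducedness of $D$ prevents either $A$ or $B$ from being nugatory at $c$). This would contradict the primeness of $K$. Hence at least one smoothing in $x_\alpha$ is non-Seifert, so $F_w$ is 1-sided by Observation \ref{O:2Sided}.

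The remaining and most delicate step is to show that $D' = D_\alpha$ is reduced. Suppose for contradiction that $c'$ is a nugatory crossing of $D_\alpha$. Since $D$ is reduced and coincides with $D_\alpha$ outside $\nu c$, this nugatoriness must come from the $u^-$ splice at $c$ merging two faces of $D$ into a single face of $D_\alpha$. The $u^-$ splice merges the opposite pair $F_N, F_S$ of faces at $c$ that is not merged by the Seifert splice, so both $F_N$ and $F_S$ must appear among the four faces of $D$ at $c'$; equivalently, $c$ and $c'$ share this opposite pair. In the principal case, where $c, c'$ are joined by a bigon of $D$ (so that $F_N$ is the bigon face), the condition that $F_S$ also meet $c'$ forces a single face of $D$ to wrap around the bigon from beyond $c$ to beyond $c'$, which yields a simple closed curve in $S^2$ meeting $D$ transversally at exactly two points with crossings of $D$ on both sides. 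This contradicts primeness of $K$. The remaining configurations, in which $c$ and $c'$ share $F_N, F_S$ without being joined by a bigon, can be handled by similar primeness arguments together with Proposition \ref{P:nohopf}, which constrains the pairs of Seifert circles joined by the state arcs at $c$ and $c'$.

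The main obstacle is this reducedness step, specifically the combinatorial case analysis of all configurations where $c$ and $c'$ share an opposite pair of faces at $c$. In each case one must produce a suitable decomposing curve and invoke either primeness of $K$ or Proposition \ref{P:nohopf} to reach a contradiction.
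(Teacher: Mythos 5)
Your setup (identifying $F_w$ with $F_{x_\alpha}$, $D'$ with $D_\alpha$) matches the paper's, and your reduction of the reducedness question is sound: since the $u^-$ splice at $c$ merges exactly one opposite pair of faces $F_N,F_S$ at $c$ and leaves all other faces distinct, a nugatory crossing $c'$ of $D_\alpha$ forces $\{F_N,F_S\}$ to be an opposite pair at $c'$ as well. Your 1-sidedness argument, though different from the paper's, also works; it is just heavier than necessary, since it invokes primeness of $K$ together with the nontriviality of reduced alternating diagrams, whereas the paper needs only reducedness of $D$: a cycle of the state graph of $y$ through the edge at $c$ gives a simple closed curve in $F_y$ passing once through that crossing band, which is an annulus core in the orientable surface $F_y$ and becomes a M\"obius core after untwisting.

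The genuine gap is the reducedness step, which you yourself flag as ``the main obstacle.'' You treat only the bigon configuration, and even there the claimed contradiction does not hold up: the closed curve this configuration naturally produces runs through $F_N$ and $F_S$ and meets $D$ at the two \emph{crossings} $c$ and $c'$ (four strands), not at two regular points of edges, so it gives a tangle decomposition rather than a connect-sum circle and does not contradict primeness. (In fact the correct obstruction in the bigon case is different: since no arc of $D\setminus\{c,c'\}$ can cross that curve, the four arcs are forced to pair up so that $D$ is a two-component link, contradicting that $D$ is a knot diagram.) All remaining configurations are deferred to unspecified ``similar primeness arguments.'' The paper avoids this case analysis entirely by translating to state circles: if $D'$ had a nugatory crossing, then in the state $w$ either some state circle would meet only one state arc --- impossible, because untwisting merely merges two circles of $y$ and every circle of $y$ meets at least two arcs --- or some state circle of $w$ would be incident to itself at a state arc $\beta_1$; that circle must be the merge of the two circles $u_1,u_2$ of $y$ joined by the untwisted band, so $\beta_1$ and the arc at $c$ would both join $u_1$ to $u_2$, contradicting Proposition \ref{P:nohopf}. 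To complete your proof you would need either to carry out the full face-configuration analysis or to adopt this state-circle translation.
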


\begin{proof}
To see that $F_w$ is 1-sided, use the fact that $D$ is reduced to obtain a simple closed curve $\gamma\subset F_y$ that passes exactly once through the given crossing band.  This $\gamma$ is the core of an annulus in $F_y$, and thus of a mobius band in $F_w$.

To see that $D'$ is reduced, suppose otherwise.  Then some state circle $v$ in $w$ either is incident to only one state arc or is incident to itself at a state arc, $\beta_1$. The former is impossible, since untwisting a crossing band merges two state circles, and all state circles in $y$ are incident to at least two crossings.  In the latter case, $v$ must be the result of merging two state circles $u_1,u_2$ from $y$ at the state arc $\beta_2$ that corresponds to the untwisted crossing band. Because no state circle in $y$ is incident to itself at a state arc, it follows that both $\beta_1$ and $\beta_2$ join $u_1$ and $u_2$. This contradicts Proposition \ref{P:nohopf}. 
%
\end{proof}

\begin{prop}\label{P:key320}
Untwisting $F_y$ at {some} crossing band yields a 
1-sided state surface $F_w$ from a {prime} reduced alternating knot diagram.  
\end{prop}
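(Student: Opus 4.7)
The plan is to argue by contradiction: suppose that for every crossing $c$ of $D$, the diagram $D'_c$ obtained by untwisting $F_y$ at $c$ (equivalently, the non-Seifert splice of $D$ at $c$) is non-prime. Because $D$ is reduced and Proposition \ref{P:Untwist} guarantees that $D'_c$ is also reduced, no non-Seifert splice can be of type $RI^-$, as that would require $c$ to be nugatory. Hence each splice has type $u^-$, and Observation \ref{O:PrimeToNotKnot} supplies, for each $c$, a simple closed curve $\gamma_c\subset S^2$ meeting $D$ transversally at $c$ and at two other points on edges not incident to $c$, with both disks of $S^2\setminus\gamma_c$ containing non-nugatory crossings of $D'_c$.

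I would then select a minimal configuration. Among all pairs $(c,\gamma_c)$ realizing the conclusion of Observation \ref{O:PrimeToNotKnot}, choose one minimizing $n(c):=|\mathrm{int}(U_c)\cap\mathrm{Crossings}(D)|$, where $U_c$ denotes the disk of $S^2\setminus\gamma_c$ containing the fewer crossings of $D$. Denote this minimum by $n^*$ and its realizer by $(c^*,\gamma_{c^*})$. Observe that $n^*\geq 1$: otherwise $\mathrm{int}(U_{c^*})$ would contain no crossing of $D$, hence none of $D'_{c^*}$ (since $c^*\in\gamma_{c^*}$ lies on $\partial U_{c^*}$), contradicting the non-nugatory crossing requirement. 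Pick a crossing $c'\in\mathrm{int}(U_{c^*})$ that is non-nugatory in $D'_{c^*}$, and let $\gamma_{c'}$ be the corresponding curve for $c'$.

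If $\gamma_{c'}\cap\gamma_{c^*}=\emptyset$, then since $\gamma_{c'}$ is connected and passes through $c'\in\mathrm{int}(U_{c^*})$, we have $\gamma_{c'}\subset\mathrm{int}(U_{c^*})$. One disk $V$ of $S^2\setminus\gamma_{c'}$ lies inside $U_{c^*}$, and because $c'$ now lies on $\partial V$ rather than in $\mathrm{int}(V)$, we obtain $|\mathrm{int}(V)\cap\mathrm{Crossings}(D)|\leq n^*-1$. The other disk of $S^2\setminus\gamma_{c'}$ contains the complement of $U_{c^*}$, which (since $U_{c^*}$ was the smaller side of $\gamma_{c^*}$) has at least $n^*+1$ crossings of $D$, so $V$ is indeed the smaller side of $\gamma_{c'}$. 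Thus $n(c')\leq n^*-1$, contradicting the minimality of $n^*$.

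The main obstacle is the remaining case, where $\gamma_{c'}\cap\gamma_{c^*}\neq\emptyset$. Here my plan is to further refine the choice of $\gamma_{c'}$ to additionally minimize $|\gamma_{c'}\cap\gamma_{c^*}|$ among all curves valid for $c'$, and then run an innermost-arc surgery: pick an outermost arc $\tau\subset \gamma_{c'}\cap\overline{U}_{c^*}$, which together with an arc $\sigma\subset\gamma_{c^*}$ bounds an innermost subdisk $W\subset\overline{U}_{c^*}$ containing no other arc of $\gamma_{c'}$; then show that surgering $\gamma_{c'}$ across $W$ (swapping $\tau$ for a slight pushoff of $\sigma$) produces a new curve $\tilde\gamma_{c'}$ that still witnesses the non-primeness of $D'_{c'}$ but with strictly fewer intersections with $\gamma_{c^*}$, contradicting the refined minimization and reducing to the disjoint case above. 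The delicate step is to verify that $\tilde\gamma_{c'}$ retains both conclusions of Observation \ref{O:PrimeToNotKnot} — in particular, that each side still contains a non-nugatory crossing of $D'_{c'}$; this is where primeness of $K$ (which forbids $2$-point decompositions of $D$ via the edge-intersections of the two curves) and the absence of Hopf-band plumbands in $F_y$ guaranteed by Proposition \ref{P:nohopf} are both essential to control the local structure inside $W$.
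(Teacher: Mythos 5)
Your first two reductions are sound: in a reduced diagram every non-Seifert splice is of type $u^-$, so Observation \ref{O:PrimeToNotKnot} applies at every crossing, and your disjoint-curves case is a correct and clean minimality argument (the count $|\mathrm{int}(V)\cap\mathrm{Crossings}(D)|\leq n^*-1$ versus $\geq n^*+1$ on the other side does force $n(c')<n^*$). The genuine gap is the intersecting case, which you present only as a plan, and the step you flag as ``delicate'' is in fact where the proof lives. Innermost-arc surgery is problematic here for a concrete reason: the curves $\gamma_c$ are not free to be isotoped arbitrarily --- they are required to meet $D$ in exactly three points, one of which is the crossing $c$ itself. When you replace $\tau$ by a pushoff of $\sigma\subset\gamma_{c^*}$, the pushoff inherits intersections with $D$ wherever $\sigma$ meets $D$; in particular, if $\sigma$ contains the crossing $c^*$, a pushoff to one side meets the two edges of $D$ on that side, so $\tilde\gamma_{c'}$ can end up meeting $D$ in four or more points and is then no longer a witness in the sense of Observation \ref{O:PrimeToNotKnot}. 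Worse, if $|\gamma_{c'}\cap\gamma_{c^*}|=2$ there is a single arc of $\gamma_{c'}\cap\overline{U}_{c^*}$, and it necessarily contains $c'$ (since $c'\in\mathrm{int}(U_{c^*})$ lies on $\gamma_{c'}$), so surgering it away destroys the defining property that the curve passes through $c'$; surgering on the other side instead runs into the same intersection-count problem. Controlling all of this, and verifying that both sides of $\tilde\gamma_{c'}$ still contain non-nugatory crossings of $D'_{c'}$, requires a combinatorial analysis of how the three marked points of each curve sit relative to the innermost disk $W$ --- an analysis you have not supplied and which is not obviously salvageable by citing primeness and Proposition \ref{P:nohopf} alone.

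For comparison, the paper avoids curve surgery entirely and argues globally: the hypothesis that every untwisting is non-prime, combined with Proposition \ref{P:nohopf}, primeness of $K$, and reducedness of $D$, forces every disk of $S^2\backslash\backslash D$ to be incident to at least three crossings, while an Euler characteristic count forces some disk to be incident to at most three; a disk $X$ incident to exactly three crossings must (again by Proposition \ref{P:nohopf}) have $\partial X$ a Seifert circle, and then the two curves $\gamma_1,\gamma_2$ through two crossings of $\partial X$ each pass through exactly three faces and must share a second face, which contradicts either primeness and reducedness or Proposition \ref{P:nohopf}. If you want to complete your argument along your own lines, you would need to replace the surgery step with an analysis of this combinatorial type anyway; as written, the intersecting case is an unproven assertion and the proof is incomplete.
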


\begin{figure}[b!]
\labellist
\small\hair 4pt
\pinlabel {\color{Green4}$\gamma_i$\color{black}} [r] at 160 100
\pinlabel {$c_i$} [t] at 215 75
\pinlabel {\color{Green4}$\gamma_{1}$\color{black}} [r] at 425 125
\pinlabel {\color{Green4}$\gamma_{2}$\color{black}} [l] at 580 125
\pinlabel {$X$} [l] at 490 70
\pinlabel {$c_2$} [l] at 425 40
\pinlabel {$c_1$} [l] at 550 40
\pinlabel {$Z_2$} [l] at 395 45
\pinlabel {$Z_1$} [l] at 580 45
\pinlabel {$Y_1$} [l] at 445 95
\pinlabel {$Y_2$} [l] at 530 95
\endlabellist
\centerline{\includegraphics[width=4.5in]{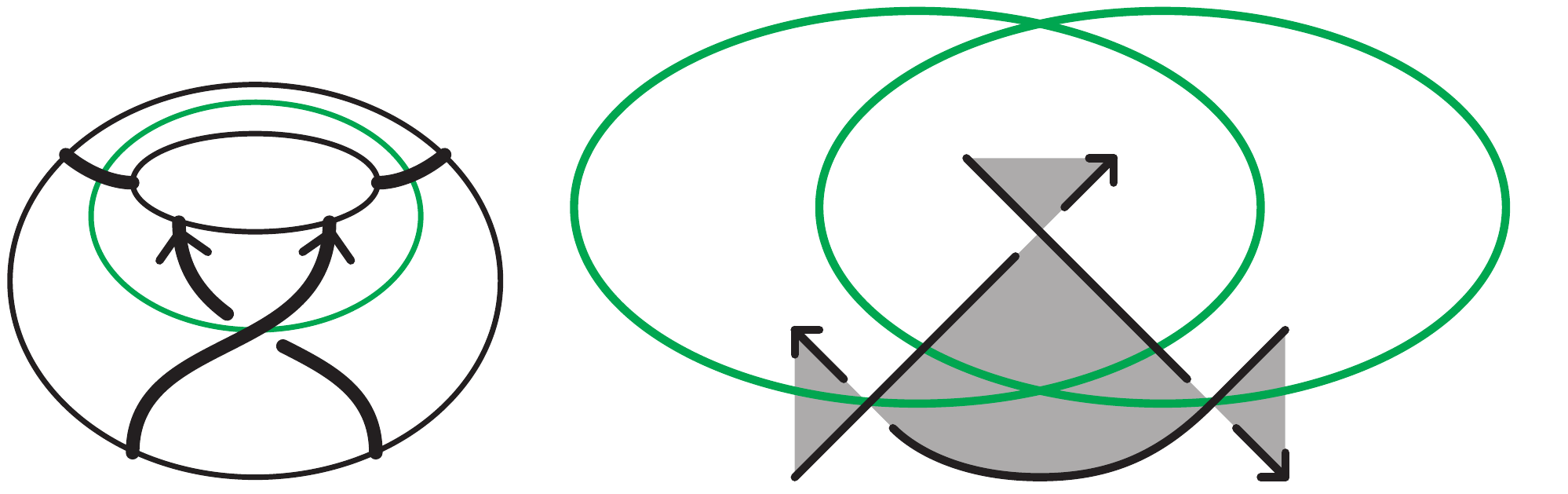}}
\caption{If $K$ is alternating and prime with $\beta_1(K)<cc(K)$, then there is a non-Seifert-type splice which yields a prime knot.}
\label{Fi:PrimeSplice}
\end{figure}

\begin{proof}
Proposition \ref{P:Untwist} implies that, for each crossing $c_i$ of $D$, untwisting $F_y$ at the crossing band near $c_i$ yields a 
1-sided state surface from a reduced 
alternating knot diagram $D_i$.
Assume for contradiction that {\it each} of these diagrams $D_i$ is non-prime.  Then Observation \ref{O:PrimeToNotKnot} implies that for every crossing $c_i$ in $D$ 
there is a simple closed curve $\gamma_i\subset S^2$ which intersects $D$ transversally at $c$ and two other points,  both of which lie on edges of $D$ which are not incident to $c$, such that $|\gamma_i\cap D'|=2$ and both disks of $S^2\setminus \gamma_i$ contain crossing points of $D_i$.  See Figure \ref{Fi:PrimeSplice}, left.  

This, together with Proposition \ref{P:nohopf} and the fact that $D$ is prime and reduced, implies that every disk of $S^2\backslash\backslash D$ is incident to {\it at least} three crossings.  Yet, an euler characteristic argument shows that some disk of $S^2\backslash\backslash D$ is incident to {\it at most} three crossings. Hence, there is a disk $X$ of $S^2\backslash\backslash D$ which is incident to {\it exactly} three crossings.  Up to symmetry, there are two possible configurations around such a disk $X$ in an arbitrary Seifert state; Proposition \ref{P:nohopf} rules out one of them. The only other possibility is that $\partial X$ is a Seifert circle of $y$, as in Figure \ref{Fi:PrimeSplice}, right.  

Let $c_1,c_2$ be two crossings on $\partial X$, and consider the arcs $\gamma_1,\gamma_2$ passing through them.
Each $\gamma_{i}$ passes through exactly three disks of $S^2\backslash\backslash D$, namely $X$ and two others, $Y_i$ and $Z_i$, where $Z_i$ is incident to $c_i$.  Since $\gamma_{1}$ and $\gamma_{2}$ intersect in a second point, outside of $X$, we must either have $Y_1=Y_2$ or $Z_1=Z_2$.  The first possibility contradicts the assumptions that $K$ is prime and $D$ is reduced; the second contradicts Proposition \ref{P:nohopf}.
\end{proof}

Therefore, with the assumptions and notation from the beginning of \textsection\ref{S:3} and \textsection\ref{S:320}:
\begin{lemma}\label{L:key321}
There exist $F_x$ and $\alpha$ such that $F_{x_\alpha}$ is 1-sided with $\beta_1(F_{x_\alpha})=\beta_1(K_\alpha)=cc(K_\alpha)$, and $D_\alpha$ is a reduced alternating diagram of the prime knot $K_\alpha$. 
\end{lemma}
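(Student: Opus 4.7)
The plan is to combine Proposition \ref{P:key320}, which supplies a crossing whose untwisting is well-behaved, with a parity obstruction. Specifically, let $c$ be a crossing of $D$ such that untwisting the corresponding crossing band in $F_y$ yields a 1-sided state surface $F_w$ from a prime, reduced alternating knot diagram $D'$; Proposition \ref{P:key320} provides such a $c$. Let $x$ be the state of $D$ obtained from $y$ by reversing the smoothing at $c$. By the discussion preceding Proposition \ref{P:Untwist}, $F_x$ is 1-sided with $\beta_1(F_x) = \beta_1(F_y) + 1 = cc(K)$ and has a unique $u^-$-type vertical arc $\alpha$. The identification in Figure \ref{Fi:Untwist} then gives $F_{x_\alpha} = F_w$ and $D_\alpha = D'$, so $D_\alpha$ is reduced alternating and $K_\alpha$ is prime. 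A direct count of state circles (unchanged by the splice, with $D_\alpha$ having one fewer crossing than $D$) yields $\beta_1(F_{x_\alpha}) = \beta_1(F_y) = \beta_1(K) = cc(K) - 1$.

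Next I would establish $cc(K_\alpha) = cc(K) - 1$. The upper bound $cc(K_\alpha) \leq \beta_1(F_{x_\alpha}) = cc(K) - 1$ is immediate, since $F_{x_\alpha}$ is 1-sided and spans $K_\alpha$. For the reverse, take any 1-sided surface $F_v$ spanning $K_\alpha$ with $\beta_1(F_v) = cc(K_\alpha)$, then reattach a crossing band at $c$ with the twist that inverts the $u^-$ splice; this produces a surface $F'$ spanning $K$ with $\beta_1(F') = cc(K_\alpha) + 1$. Because the band meets $F_v$ only along $\partial F_v$, every orientation-reversing loop in $F_v$ remains orientation-reversing in $F'$, so $F'$ is 1-sided and $cc(K) \leq cc(K_\alpha) + 1$.

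Finally, a parity argument gives $\beta_1(K_\alpha) = cc(K_\alpha)$. The hypothesis $\beta_1(K) < cc(K)$ forces $\beta_1(K) = 2g(K) = cc(K) - 1$, so $cc(K_\alpha) = cc(K) - 1 = 2g(K)$ is an even integer. If instead $\beta_1(K_\alpha) < cc(K_\alpha)$ held, the same equivalence applied to $K_\alpha$ would force $2g(K_\alpha) = \beta_1(K_\alpha) = cc(K_\alpha) - 1$ to be an odd integer, which is impossible. Hence $\beta_1(K_\alpha) = cc(K_\alpha) = cc(K) - 1 = \beta_1(F_{x_\alpha})$, completing the lemma.

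The step I expect to require the most care is verifying that reattaching the crossing band produces a surface spanning $K$ (rather than the knot obtained by switching the crossing at $c$) while preserving 1-sidedness. The first point reduces to the fact that the $u^-$ splice is topologically inverted by gluing a half-twisted band at the same site with the twist matching the original crossing; the second follows because any non-orientable loop in the interior of $F_v$ lies disjoint from the newly attached band and so keeps its non-orientable neighborhood.
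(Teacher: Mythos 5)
Your first paragraph matches the paper's proof exactly: both arguments invoke Proposition \ref{P:key320} to select the crossing, identify $F_{x_\alpha}$ with the untwisted surface $F_w$ (so that $D_\alpha$ is prime, reduced, and alternating), and compute $\beta_1(F_{x_\alpha})=cc(K)-1$. The divergence, and the gap, is in your second paragraph. To prove $cc(K)\leq cc(K_\alpha)+1$ you attach the crossing band to an \emph{arbitrary} minimal 1-sided spanning surface $F_v$ for $K_\alpha$ and assert that ``the band meets $F_v$ only along $\partial F_v$.'' That is not automatic. The band $b$ is a fixed half-twisted rectangle in the ball $\nu c$, while $F_v$ is only constrained along its boundary; a generic spanning surface for $K_\alpha$ meets $\text{int}(b)$ in circles and arcs, and in particular can cross the two free edges of $\partial b$ --- precisely the edges that become part of $K$. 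An arc of $F_v\cap b$ running from one free edge to the other separates the two attaching arcs of $b$, so it cannot be isotoped off the band without crossing $K_\alpha$, and removing it by surgery could change $\beta_1$ or sidedness. So as written, $F_v\cup b$ need not be an embedded spanning surface for $K$, and the inequality $cc(K)\leq cc(K_\alpha)+1$ is not yet established.

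The repair is exactly the move the paper makes: since $D_\alpha$ is an alternating diagram of $K_\alpha$, Theorem \ref{T:AK} (or Corollary \ref{C:AK}) lets you take the competitor surface to be a \emph{state surface} $S'$ from $D_\alpha$ realizing $\beta_1(K_\alpha)$ (resp.\ a 1-sided one realizing $cc(K_\alpha)$). For a state surface, attaching the crossing band at $c$ is a purely diagrammatic operation producing a state surface $S$ from $D$ with $\beta_1(S)=\beta_1(S')+1$, with no embedding issue and with 1-sidedness inherited from $S'$. With that substitution your argument closes. Note also that your endgame differs from the paper's: the paper compares $\beta_1(F_{x_\alpha})$ directly with $\beta_1(S')=\beta_1(K_\alpha)$ and then reads off $cc(K_\alpha)=\beta_1(K_\alpha)$ from the 1-sidedness of $F_{x_\alpha}$, whereas you first pin down $cc(K_\alpha)=cc(K)-1=2g(K)$ and then use parity together with the dichotomy ``$\beta_1<cc$ iff $\beta_1=2g=cc-1$'' to exclude $\beta_1(K_\alpha)<cc(K_\alpha)$. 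That parity observation is correct and rather slick, but it becomes unnecessary once you work with state surfaces, since the direct comparison already yields $\beta_1(F_{x_\alpha})=\beta_1(K_\alpha)$.
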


\begin{proof}
Use Proposition \ref{P:key320} to obtain a state $x$ of $D$ which differs from the Seifert state $y$ of $D$ at exactly one crossing, such that untwisting $F_y$ at the associated crossing band yields a 
1-sided state surface $F_w$ from a {prime} reduced alternating knot diagram $D_\alpha$.  Then $F_x$ contains only one $u^-$ type vertical arc $\alpha$, namely the one at the crossing where $x$ differs from $y$, and $F_{x_{\alpha}}=F_w$.  Hence, $F_{x_{\alpha}}$ is a 
1-sided state surface from a prime reduced alternating knot diagram.  

To see that $\beta_1(F_{x_\alpha})=\beta_1(K_\alpha)=cc(K_\alpha)$, use Theorem \ref{T:AK} to obtain a state surface $S'$ from $D_\alpha$ with $\beta_1(S')=\beta_1(K_\alpha)$. Attaching a crossing band to $S'$ near $\alpha$ gives a 
state surface $S$ for $K$ with $\beta_1(S)=\beta_1(S')+1$.  If it were the case that $\beta_1(S')<\beta_1(F_{x_\alpha})$, then we would have the contradiction
\[\beta_1(K)=\beta_1(F_y)=\beta_1(F_{x_{\alpha}})>\beta_1(S')=\beta_1(S)+1.\] 
The fact that $F_{x_\alpha}$ is 1-sided now gives $\beta_1(F_{x_\alpha})=\beta_1(K_\alpha)=cc(K_\alpha)$. 
\end{proof}

\subsection{Alternating knots with $\beta_1(K)=cc(K)$}\label{S:3new}

In addition to the assumptions stated at the beginning of \textsection\ref{S:3}, 
assume throughout \textsection\ref{S:3new} that $\beta_1(K)=cc(K)$.

\begin{prop}\label{P:cutmin}
For any $\alpha\in\mathscr{A}_{x,u}$, $F_{x_\alpha}$ is 1-sided and essential with 
\[\beta_1(F_{x_\alpha})=\beta_1(K_\alpha)=cc(K_\alpha).\]
\end{prop}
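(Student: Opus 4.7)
The plan is to split on whether $\alpha$ (or, more broadly, any $u^-$ vertical arc of $F_x$) lies in a 2-sided minimal tangle subsurface of $F_x$. As a preliminary, since $\partial F_{x_\alpha}=K_\alpha$ is a knot, $F_{x_\alpha}$ is connected, and cutting the proper arc $\alpha$ gives $\beta_1(F_{x_\alpha})=\beta_1(F_x)-1=cc(K)-1$. I also observe that $F_x$ must be essential: otherwise, Proposition \ref{P:adqess} forces $x$ to differ from the Seifert state $y$ at exactly one crossing, and adequacy of the Seifert state of a reduced alternating diagram then gives $\beta_1(F_x)=\beta_1(F_y)+1=2g(K)+1$, so $cc(K)=2g(K)+1>2g(K)$, contradicting $\beta_1(K)=cc(K)$. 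Hence $|\mathscr{A}_{x,u}|\geq 2$.

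In Case A, $\alpha$ lies in some 2-sided minimal tangle subsurface of $F_x$. Lemma \ref{L:key3} then directly gives that $F_{x_\alpha}$ is 1-sided and $K_\alpha$ is prime. For the equalities $\beta_1(F_{x_\alpha})=\beta_1(K_\alpha)=cc(K_\alpha)$, I argue by contradiction: any spanning surface $F'$ for $K_\alpha$ with $\beta_1(F')<cc(K)-1$ yields, upon reattaching a crossing band at the crossing associated to $\alpha$, a spanning surface for $K$ with $\beta_1<cc(K)$. Whether the resulting surface is 1-sided or 2-sided, the hypothesis $\beta_1(K)=cc(K)\leq 2g(K)$ produces a contradiction.

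In Case B, no $u^-$ vertical arc of $F_x$ lies in any 2-sided minimal tangle subsurface (the $9_{10}$ situation). If $F_{x_\alpha}$ had a 2-sided boundary connect summand $F_{x_\alpha}^{U'}$, Corollary \ref{C:2SidedTangle} (applicable because primeness of $K$ forces $F_x$ to be prime) would produce a 2-sided minimal tangle subsurface of $F_x$ containing a $u^-$ vertical arc, contradicting the case hypothesis. So every boundary connect summand of $F_{x_\alpha}$ is 1-sided, making $F_{x_\alpha}$ itself 1-sided. By Observation \ref{O:PrimeToNot2}, this decomposition matches the prime decomposition $K_\alpha=\#_i K_{\alpha,i}$; a summandwise analog of the Case A gluing argument shows that each $F_{x_\alpha}^{U_i}$ is a minimal 1-sided spanning surface for $K_{\alpha,i}$, so $\beta_1(K_{\alpha,i})=cc(K_{\alpha,i})$ for every $i$. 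Corollary \ref{C:ConnSum} then gives $cc(K_\alpha)=\sum_i cc(K_{\alpha,i})=\beta_1(F_{x_\alpha})$ and $\beta_1(K_\alpha)=cc(K_\alpha)$. Essentiality of $F_{x_\alpha}$ follows because any compression or $\partial$-compression would yield a spanning surface for $K_\alpha$ of strictly smaller first Betti number, contradicting $\beta_1(F_{x_\alpha})=\beta_1(K_\alpha)$.

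The hardest step will be Case B. The summandwise gluing argument requires the state-surface analog of $(\dagger)$ (the condition $(\star)$ alluded to in \S\ref{S:30}) to be verified simultaneously across all pieces $F_{x_\alpha}^{U_i}$. There is also the intermediate situation where $\alpha$ itself is not contained in any 2-sided minimal tangle subsurface of $F_x$ while some other $u^-$ arc is; since Corollary \ref{C:2SidedTangle} produces only \emph{some} $u^-$ arc in such a tangle, not necessarily $\alpha$, this must be folded into the Case B framework via the same summand analysis.
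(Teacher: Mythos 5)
Your endgame (Theorem \ref{T:AK} plus reattaching a crossing band to force $\beta_1(F_{x_\alpha})=\beta_1(K_\alpha)$, then reading off $cc(K_\alpha)$ and essentiality from minimality together with 1-sidedness) matches the paper's, and your preliminary observation that $F_x$ must be essential with $|\mathscr{A}_{x,u}|\geq 2$ is correct and worth making explicit. The problem is the core claim that $F_{x_\alpha}$ is 1-sided for \emph{every} $\alpha\in\mathscr{A}_{x,u}$. Your case split is essentially the one the paper uses to prove the \emph{existence} statement of Lemma \ref{L:key1}, and it does not close up into a universal statement. You acknowledge the intermediate case ($\alpha$ lies in no 2-sided minimal tangle subsurface, but some other $u^-$ arc does), but ``folding it into Case B'' fails: if $F_{x_\alpha}$ were 2-sided there, Corollary \ref{C:2SidedTangle} would hand you a 2-sided minimal tangle subsurface $F_x^U$ containing \emph{some} $u^-$ arc $\alpha''$, with $U$ adjusted to avoid $\beta$, so $\alpha''\neq\alpha$; that is exactly what the intermediate hypothesis permits, so there is no contradiction and no information about $F_{x_\alpha}$. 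There is also a circularity: Corollary \ref{C:2SidedTangle} rests on Observation \ref{O:PrimeToNot2}, whose hypothesis is that $F_{x_\alpha}$ is \emph{essential} --- one of the conclusions you are trying to prove. The paper deliberately proves Proposition \ref{P:cutmin} first, by a self-contained argument, and only afterwards deploys Corollary \ref{C:2SidedTangle} in Lemma \ref{L:key1}.

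The paper's actual argument for 1-sidedness is a parity count that handles all $\alpha$ simultaneously: if some $F_{x_\alpha}$ were 2-sided, then $x_\alpha$ would be the Seifert state of $D_\alpha$, so every face of $S^2\backslash\backslash x_\alpha$ contains an even number of state arcs, forcing the two faces incident to $\beta$ to be the only odd faces of $S^2\backslash\backslash x$; this pins every arc of $\mathscr{A}_{x,u}$ to those two faces and exhibits $D$ as $n$ non-Seifert crossings separating $n$ Seifert-only tangles, at least one of which contains crossings since $\beta_1(F_x)>1$ --- and Lemma \ref{L:2SidedTangle} then forces that tangle to contain a $u^-$ arc, a contradiction. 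Two smaller points. First, you cannot ``reattach a crossing band'' to an arbitrary spanning surface $F'$ of $K_\alpha$; pass through Theorem \ref{T:AK} first to replace $F'$ by a state surface of $D_\alpha$, as the paper does. Second, the summandwise analysis in your Case B is not needed for this proposition: once $F_{x_\alpha}$ is 1-sided and $\beta_1(F_{x_\alpha})=\beta_1(K_\alpha)$, the chain $cc(K_\alpha)\leq\beta_1(F_{x_\alpha})=\beta_1(K_\alpha)\leq cc(K_\alpha)$ finishes the proof; the condition (\ref{E:star}) belongs to Lemma \ref{L:key1}, not here.
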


\begin{figure}[b!]
\centerline{\includegraphics[width=5.5in]{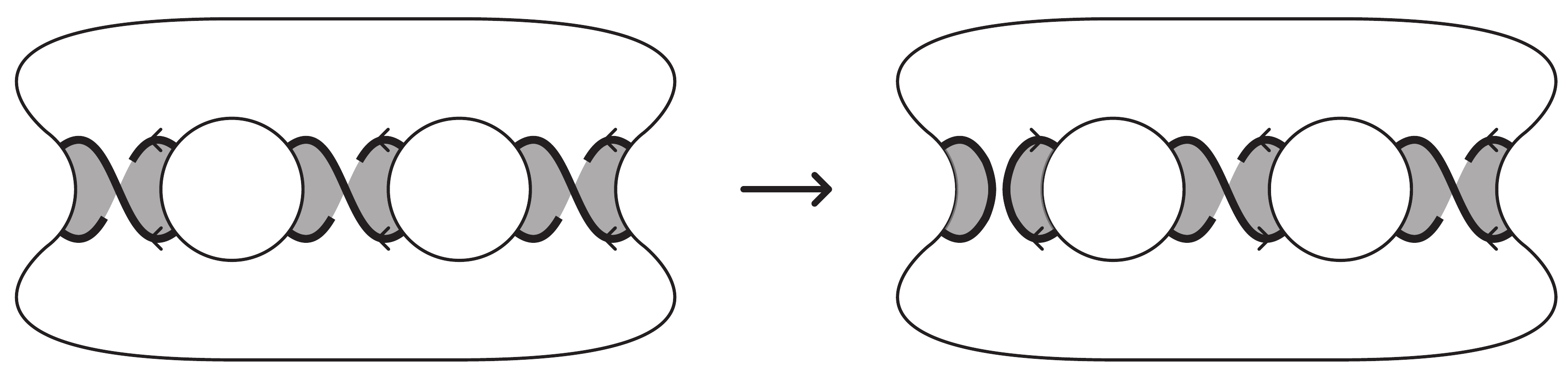}}
\caption{The situation in the proof of Proposition \ref{P:cutmin}.}
\label{Fi:TwistRegion}
\end{figure}

\begin{proof}
Assume for contradiction that some $F_{x_\alpha}$ is 2-sided. Then $x_\alpha$ is the Seifert state of $D_\alpha$ and, by Observation \ref{O:2Sided}, the boundary of each component of $S^2\backslash\backslash x_\alpha$ contains an even number of state arcs from $x_\alpha$.  Therefore, the components of $S^2\backslash\backslash x$ incident to $\alpha$ were the {\it only two} that contained an odd number of state arcs. Since $\alpha$ was arbitrary in $\mathscr{A}_{x,u}$, all state arcs in $\mathscr{A}_{x,u}$ must be incident to the same two components of $S^2\backslash\backslash x$. 

Hence, $D$ consists of $n$ crossings whose smoothing in $x$ is non-Seifert-type, together with $n$ diagrammatic tangles, each of which contains only crossings whose smoothing in $x$ is Seifert-type. (Figure \ref{Fi:TwistRegion}, left, shows the case $n=3$.)
Some of these tangles may be trivial, containing no crossings, but at least one of the tangles must contain crossings, since $\beta_1(F_x)>1$. This situation is impossible, by Lemma \ref{L:2SidedTangle}. Thus, $F_{x_\alpha}$ is 1-sided. 

Use Theorem \ref{T:AK} to obtain a state surface $S'$ from $D_\alpha$ with $\beta_1(S')=\beta_1(K_\alpha)$. Attaching a crossing band to $S'$ near $\alpha$ gives a 
state surface $S$ for $K$ with $\beta_1(S)=\beta_1(S')+1$.  If it were the case that $\beta_1(S')<\beta_1(F_{x_\alpha})$, then we would have the contradiction
\[\beta_1(K)=\beta_1(F_x)=\beta_1(F_{x_{\alpha}})+1>\beta_1(S')+1=\beta_1(S).\] 
The fact that $F_{x_\alpha}$ is 1-sided now implies that $\beta_1(F_{x_\alpha})=\beta_1(K_\alpha)=cc(K_\alpha)$, and hence that $F_{x_\alpha}$ is essential.
\end{proof}


With the setup from the start of \textsection\ref{S:3}, suppose that $F_{x_\alpha}=\natural_{i\in I}F_i$ is a boundary connect sum decomposition of $F_{x_\alpha}$ associated to the connect sum decomposition $K_\alpha=\#_{i\in I}K_i$. Say that $F_{x_\alpha}$ satisfies (\ref{E:star}) if 
\begin{equation}\label{E:star}
F_i \text{ is 1-sided with }\beta_1(F_i)=\beta_1(K_i) \text{ for each }i\in I.
\tag{$*$}
\end{equation}
\begin{obs}\label{O:star}
Any $F_{x_\alpha}$ satisfying (\ref{E:star}) is 1-sided with $\beta_1(F_{x_\alpha})=\beta_1(K_\alpha)=cc(K_\alpha)$. 
\end{obs}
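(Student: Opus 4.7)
The plan is to pull together the additivity facts that have already been established in the paper. The observation splits into two independent pieces: 1-sidedness of $F_{x_\alpha}$, and the chain of equalities $\beta_1(F_{x_\alpha})=\beta_1(K_\alpha)=cc(K_\alpha)$. Neither step requires new geometric input.

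First I would argue that $F_{x_\alpha}$ is 1-sided. By hypothesis each summand $F_i$ contains a M\"obius band; this M\"obius band survives in the boundary connect sum $\natural_{i\in I}F_i=F_{x_\alpha}$ (the boundary-connect-sum disks can be chosen disjoint from it), so $F_{x_\alpha}$ contains a M\"obius band and is therefore 1-sided.

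Next I would compute $\beta_1(F_{x_\alpha})$. Since $\chi$ is additive under boundary connect sum up to the usual correction $\chi(F_1\natural F_2)=\chi(F_1)+\chi(F_2)-1$, an easy induction gives
\[\beta_1(F_{x_\alpha})=1-\chi(F_{x_\alpha})=\sum_{i\in I}\bigl(1-\chi(F_i)\bigr)=\sum_{i\in I}\beta_1(F_i).\]
By condition (\ref{E:star}) the right side equals $\sum_{i\in I}\beta_1(K_i)$, which in turn equals $\beta_1(K_\alpha)$ by the additivity of $\beta_1$ under knot connect sum noted just before Proposition \ref{P:ConnSum}.

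Finally, I would obtain the equality $\beta_1(K_\alpha)=cc(K_\alpha)$ from Corollary \ref{C:ConnSum}. For each $i$, the surface $F_i$ is 1-sided, so $cc(K_i)\leq\beta_1(F_i)=\beta_1(K_i)\leq cc(K_i)$, forcing $\beta_1(K_i)=cc(K_i)$. Thus every summand satisfies hypothesis (\ref{E:dagger}) of Corollary \ref{C:ConnSum}, which yields $cc(K_\alpha)=\sum_{i\in I}cc(K_i)=\sum_{i\in I}\beta_1(K_i)=\beta_1(K_\alpha)$, and combining with the previous paragraph closes the chain. There is no real obstacle here — the whole content of the observation is simply that condition (\ref{E:star}) is engineered to feed directly into Corollary \ref{C:ConnSum}.
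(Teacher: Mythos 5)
Your proposal is correct and fills in the details exactly as the paper intends: the paper states this as an unproved Observation, with the key point (that (\ref{E:star}) for $F_{x_\alpha}$ implies (\ref{E:dagger}) for $K_\alpha$, feeding into Corollary \ref{C:ConnSum}) noted in the surrounding remarks. Your use of additivity of $\beta_1$ under boundary connect sum and under knot connect sum, plus the squeeze $cc(K_i)\leq\beta_1(F_i)=\beta_1(K_i)\leq cc(K_i)$, is the intended argument.
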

Moreover, each $F_i$ is essential, as is $F_{x_\alpha}$. This further implies that the boundary connect sum decomposition of $F_{x_\alpha}$ is unique.  Note additionally that, if $F_{x_\alpha}$ satisfies (\ref{E:star}), then $K$ satisfies the property (\ref{E:dagger}) defined in Corollary \ref{C:ConnSum}.
Conversely, Theorem \ref{T:AK} implies:

\begin{obs}\label{O:daggerstar}
Any alternating knot obeying (\ref{E:dagger}) has a state surface obeying (\ref{E:star}).
\end{obs}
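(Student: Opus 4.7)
The plan is to build the required state surface by boundary-connect-summing state surfaces coming from alternating diagrams of the prime summands of $K$. Suppose $K$ is alternating and obeys (\ref{E:dagger}), so that $K=\#_{i\in I}K_i$ with $\beta_1(K_i)=cc(K_i)$ for each $i$. The first step is to put everything into a diagrammatic form. By Menasco's theorem \cite{men84} (already invoked in \textsection\ref{S:tangle}), every connect sum decomposition of an alternating knot can be realized diagrammatically: starting from any alternating diagram of $K$, the 2-spheres witnessing the prime decomposition may be isotoped to intersect $S^2$ in single circles, each meeting $D$ transversely in two points. In particular, each $K_i$ is alternating with an alternating diagram $D_i$, and one may take $D=D_1\#D_2\#\cdots\# D_{|I|}$ as an alternating diagram of $K$.

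Next, for each $i\in I$ I apply Corollary \ref{C:AK} to the alternating diagram $D_i$ of the nontrivial knot $K_i$, obtaining a 1-sided state surface $F_i$ from $D_i$ with $\beta_1(F_i)=cc(K_i)$. By the hypothesis (\ref{E:dagger}), this gives $\beta_1(F_i)=cc(K_i)=\beta_1(K_i)$, exactly the numerical condition required by (\ref{E:star}), and each $F_i$ is 1-sided.

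Finally, I glue these pieces together. A state of $D=D_1\#\cdots\#D_{|I|}$ is just a union $x=\bigcup_i x_i$ of states of the $D_i$, and at each connect-sum point the two state circles of $x_1\cup\cdots\cup x_{|I|}$ containing the connect-sum arcs merge into a single state circle of $x$. Consequently, taking $x_i$ to be the state underlying $F_i$, the resulting state surface $F_x$ is canonically the boundary connect sum $F=\natural_{i\in I}F_i$. This decomposition matches the prime decomposition $K=\#_i K_i$, and each summand $F_i$ obeys the 1-sidedness and $\beta_1$-equality conditions of (\ref{E:star}), so $F$ obeys (\ref{E:star}).

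The only substantive input is Corollary \ref{C:AK}, which supplies a minimal-$\beta_1$ 1-sided state surface for each prime alternating summand; the remainder is a verification that the state-surface construction respects diagrammatic connect sum, which is immediate from the local picture at each connect-sum sphere. There is accordingly no real obstacle here beyond carefully unpacking the definitions.
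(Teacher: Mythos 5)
Your argument is correct and matches the paper's intended justification: the paper simply asserts that Theorem \ref{T:AK} implies this observation, and your proof fills in exactly the expected details --- realize the prime decomposition diagrammatically, apply Corollary \ref{C:AK} (with hypothesis (\ref{E:dagger})) to each summand diagram, and note that state surfaces respect diagrammatic connect sum. No substantive difference from the paper's route.
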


Here is the main result of this subsection.

\begin{lemma}\label{L:key1}
 Any 1-sided state surface $F_x$ from $D$ with $\beta_1(F_x)=\beta_1(K)$ contains a $u^-$ type vertical arc $\alpha$ such that 
$F_{x_\alpha}$ satisfies (\ref{E:star}).
\end{lemma}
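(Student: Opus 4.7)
The plan is to pick an arbitrary $\alpha\in\mathscr{A}_{x,u}$ (nonempty by Observation \ref{O:2Sided}, since $F_x$ is 1-sided), apply Proposition \ref{P:cutmin} to get that $F_{x_\alpha}$ is 1-sided and essential with $\beta_1(F_{x_\alpha})=\beta_1(K_\alpha)=cc(K_\alpha)$, and examine its (unique, by essentiality) boundary connect sum decomposition $F_{x_\alpha}=\natural_{i\in I}F_i$ corresponding to the prime decomposition $K_\alpha=\#_{i\in I}K_i$. The argument then splits on whether every $F_i$ is 1-sided.

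If every $F_i$ is 1-sided, then $\beta_1(F_i)\ge cc(K_i)\ge \beta_1(K_i)$ for each $i\in I$, and additivity of $\beta_1$ under boundary connect sum of surfaces and under connect sum of knots gives
\[\sum_{i\in I}\beta_1(F_i)=\beta_1(F_{x_\alpha})=\beta_1(K_\alpha)=\sum_{i\in I}\beta_1(K_i),\]
forcing $\beta_1(F_i)=\beta_1(K_i)$ for every $i\in I$. Hence $(\ref{E:star})$ holds for the chosen $\alpha$.

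Otherwise some summand $F_j=F_{x_\alpha}^U$ is 2-sided. Observe that $F_x$ is prime: since $K$ is prime, any simple closed curve in $S^2$ meeting $D$ in two points bounds a disk containing no crossings, so no proper boundary connect summand of $F_x$ exists. Corollary \ref{C:2SidedTangle} then applies to $F_j$, producing (after adjusting $U$ per its statement, so that $U$ avoids the state arc $\beta=x\setminus x_\alpha$) a 2-sided minimal tangle subsurface $F_x^U$ of $F_x$ containing some $u^-$ type vertical arc $\alpha'\neq\alpha$. Lemma \ref{L:key3} then gives that $F_{x_{\alpha'}}$ is 1-sided and $K_{\alpha'}$ is prime, so its boundary connect sum decomposition is trivial; combined with Proposition \ref{P:cutmin}, which supplies $\beta_1(F_{x_{\alpha'}})=\beta_1(K_{\alpha'})$, the property $(\ref{E:star})$ is trivially satisfied by $\alpha'$.

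The real content is in the second case: the initially chosen $\alpha$ may fail, but whenever it does, the existence of a 2-sided summand routes us, via the tangle-subsurface machinery of \textsection\ref{S:tangle}, directly to an $\alpha'$ for which $K_{\alpha'}$ is prime. The only subtle point to verify is the primality of $F_x$ (needed to invoke Corollary \ref{C:2SidedTangle}), which follows at once from the primality of $K$; no iteration is required, since $K_{\alpha'}$ already comes out prime on the first pass.
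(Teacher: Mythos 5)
Your proof is correct and uses the same machinery as the paper's (Proposition \ref{P:cutmin}, Corollary \ref{C:2SidedTangle}, and Lemma \ref{L:key3}); the paper merely organizes the case split in the opposite order --- casing on whether $F_x$ has a 2-sided minimal tangle subsurface containing a $u^-$ arc and deriving a contradiction --- where you directly produce the good arc $\alpha'$ from a 2-sided summand. Your explicit additivity argument showing that when every $F_i$ is 1-sided one is forced to have $\beta_1(F_i)=\beta_1(K_i)$ is a step the paper leaves implicit (it simply asserts that failure of $(\ref{E:star})$ yields a 2-sided summand), so spelling it out is a welcome addition.
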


\begin{proof}
%
Assume first that $F_x$ contains a 2-sided minimal tangle subsurface which contains some $\alpha\in\mathscr{A}_{x,u}$. Then Lemma \ref{L:key3} implies that $F_{x_\alpha}$ is 1-sided and $K_\alpha$ is prime. Proposition \ref{P:cutmin} further implies that $F_{x_\alpha}$ is prime with $\beta_1(F_{x_\alpha})=\beta_1(K_\alpha)=cc(K_\alpha)$. Therefore, $F_{x_\alpha}$ satisfies (\ref{E:star}). 

Assume instead that every 2-sided minimal tangle subsurface of $F_x$ contains {\it only } Seifert-type vertical arcs. Choose any $\alpha\in\mathscr{A}_{x,u}$.  If $F_{x_\alpha}$ satisfies (\ref{E:star}), then we are done. Otherwise, some boundary connect summand of $F_{x_\alpha}$ is 2-sided.  But then Corollary \ref{C:2SidedTangle} implies that the corresponding minimal tangle subsurface in $F_x$ is 2-sided and contains a $u^-$ type vertical arc, contrary to assumption.  
\end{proof}

\section{Main theorem}\label{S:Main}

Throughout \textsection\ref{S:Main}, $D$ will be a reduced alternating diagram of a nontrivial knot $K$, and $F_x$ will be a 1-sided state surface from $D$ with $\beta_1(F_x)=cc(K)$. (We no longer assume $K$ is prime.) As in \textsection\ref{S:3}, denote $\mathscr{A}_x=\mathscr{A}_{x,S}\cup\mathscr{A}_{x,u}$, 
and given $\alpha\in \mathscr{A}_{x,u}$, denote $F_x\backslash\backslash\alpha=F_{x_\alpha}$ and $\partial F_{x_\alpha}=K_\alpha$.  Now also let $F_x=\natural_{i\in I}F_i$ and $K=\#_{i\in I}K_i$ be corresponding (boundary) connect sum decompositions. Recall that $F_x$ satisfies (\ref{E:star}) if each $F_i$ is 1-sided with $\beta_1(F_i)=\beta_1(K_i)$. Recall also that, if $K$ admits such a state surface, then $K$ satisfies (\ref{E:dagger}): $cc(K_i)=\beta_1(K_i)$ for each $i\in I$. Proposition \ref{P:cutmin}  and Lemma \ref{L:key1} 
generalize to this setting as follows: 

\begin{obs}\label{O:newcut2}
For any $\alpha\in\mathscr{A}_{x,u}$, $F_{x_\alpha}$ is 1-sided and essential with $\beta_1(F_{x_\alpha})=\beta_1(K_\alpha)=cc(K_\alpha)$.
\end{obs}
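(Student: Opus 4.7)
The plan is to localize $\alpha$ to a unique prime summand of $F_x$, apply the prime-case results of \textsection\ref{S:3} there, and reassemble. I begin by writing $F_x = \natural_{i\in I} F_i$ corresponding to the prime decompositions $D = \#_{i\in I} D_i$ and $K = \#_{i\in I} K_i$, so each $F_i$ is a state surface from a reduced alternating diagram $D_i$ of the prime knot $K_i$. The vertical arc $\alpha$ lies in a unique summand $F_j$, and Observation \ref{O:2Sided} applied to $D_j$ forces $F_j$ to be 1-sided. Cutting is local:
\[F_{x_\alpha} \;=\; \bigl(\natural_{i\neq j} F_i\bigr) \,\natural\, \bigl(F_j \backslash\backslash \alpha\bigr), \qquad K_\alpha \;=\; K_j' \,\#\, \bigl(\#_{i\neq j} K_i\bigr),\]
where $K_j' = \partial\bigl(F_j \backslash\backslash \alpha\bigr)$.

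Next I pin down $\beta_1$ on each summand via a replacement argument. If $\beta_1(F_j) > cc(K_j)$, replacing $F_j$ inside $F_x$ by a 1-sided state surface of $D_j$ realizing $cc(K_j)$ (which exists by Corollary \ref{C:AK}) produces a 1-sided state surface of $D$ with $\beta_1 < cc(K)$, a contradiction; hence $\beta_1(F_j) = cc(K_j)$. For $i \neq j$, replacing $F_i$ by a state surface of $D_i$ realizing $\beta_1(K_i)$ (via Theorem \ref{T:AK}) keeps $F_x$ 1-sided through the untouched summand $F_j$, and the same minimality argument gives $\beta_1(F_i)=\beta_1(K_i)$; in particular, $F_i$ is 2-sided whenever $\beta_1(K_i)<cc(K_i)$.

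With $F_j$ now a 1-sided state surface from the prime reduced alternating diagram $D_j$ with $\beta_1(F_j)=cc(K_j)$, I apply the prime case of \textsection\ref{S:3}. When $\beta_1(K_j)=cc(K_j)$, Proposition \ref{P:cutmin} directly yields that $F_j\backslash\backslash\alpha$ is 1-sided essential with $\beta_1(F_j\backslash\backslash\alpha) = \beta_1(K_j') = cc(K_j')$. When $\beta_1(K_j)<cc(K_j)$, Proposition \ref{P:nohopf} forces $x_j$ to differ from the Seifert state at one crossing, so $\alpha$ is the unique $u^-$ arc in $F_j$; Proposition \ref{P:Untwist} gives $F_j\backslash\backslash\alpha$ as a 1-sided state surface from a reduced alternating diagram of $K_j'$ (now possibly non-prime, which is allowed), and the minimality argument at the end of the proof of Lemma \ref{L:key321}---which only needs Theorem \ref{T:AK} together with the fact that reattaching a half-twisted crossing band produces a 1-sided surface---forces $\beta_1(F_j\backslash\backslash\alpha) = \beta_1(K_j') = cc(K_j')$.

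Finally I reassemble. Since $F_j\backslash\backslash\alpha$ is 1-sided, so is $F_{x_\alpha}$; essentiality of each summand (Seifert state surfaces of reduced alternating diagrams for $i\neq j$, together with $F_j\backslash\backslash\alpha$ from step 3) gives essentiality of $F_{x_\alpha}$; and additivity of $\beta_1$ under boundary connect sums combined with the per-summand equalities yields $\beta_1(F_{x_\alpha}) = \sum_{i\neq j}\beta_1(K_i) + cc(K_j') = \beta_1(K_\alpha)$, which together with the 1-sided bound $\beta_1(F_{x_\alpha}) \geq cc(K_\alpha) \geq \beta_1(K_\alpha)$ forces equality throughout. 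The main obstacle is the bookkeeping in the replacement step: one must carefully track which summand retains 1-sidedness under the various swaps, which hinges on the fact that $F_j$ is the unique 1-sided summand as soon as any factor $K_i$ with $i\neq j$ satisfies $\beta_1(K_i)<cc(K_i)$.
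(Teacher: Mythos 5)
The paper offers no argument for this Observation beyond the remark that Proposition \ref{P:cutmin} ``generalizes,'' so you are filling a real gap, and your overall strategy --- localize $\alpha$ to the prime summand $F_j$, pin down $\beta_1(F_i)$ by a swap-and-minimize argument, apply the prime-case results to $F_j$, and reassemble --- is surely the intended one. The reduction steps and the branch $\beta_1(K_j)=cc(K_j)$ (where Proposition \ref{P:cutmin} applies verbatim to $D_j$, $F_j$, $\alpha$) are fine.

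The branch $\beta_1(K_j)<cc(K_j)$ has a genuine gap. You assert that Proposition \ref{P:nohopf} forces $x_j$ to differ from the Seifert state $y_j$ at a single crossing, so that $\alpha$ is the unique $u^-$ arc of $F_j$. Proposition \ref{P:nohopf} says nothing of the sort: it is a statement about the state arcs of $y_j$ itself, not about which states realize $cc(K_j)$. The statement you actually need is governed by Proposition \ref{P:adqess}, and that proposition explicitly allows the alternative: a 1-sided state surface from $D_j$ with $\beta_1=cc(K_j)$ and \emph{more than one} non-Seifert smoothing, which is then adequate and essential. Nothing in \textsection\ref{S:320} rules this out when $\beta_1(K_j)<cc(K_j)$ --- that subsection only proves the existential Lemma \ref{L:key321} (\emph{some} $F_x$ and \emph{some} $\alpha$ work), whereas the Observation quantifies over every $\alpha$ in every minimal 1-sided $F_x$. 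For an adequate $x_j$ with several $u^-$ arcs, Proposition \ref{P:Untwist} does not apply ($F_j$ is not the once-untwisted Seifert surface), and your argument supplies no reason that $F_j\backslash\backslash\alpha$ is 1-sided or that $cc(K_j')=\beta_1(K_j')$; the 1-sidedness of $F_{x_\alpha}$ and the equality $cc(K_\alpha)=\beta_1(K_\alpha)$ are exactly what fail to be automatic here. (Your reverse-engineering of $\beta_1(F_j\backslash\backslash\alpha)=\beta_1(K_j')$ survives, since reattaching the crossing band always yields a 1-sided state surface of $D_j$; it is the sidedness of the cut surface that is unaddressed.) To close the gap you must either prove that no adequate 1-sided state of $D_j$ realizes $cc(K_j)$ when $\beta_1(K_j)<cc(K_j)$, or handle that case directly, presumably with the tangle machinery of \textsection\ref{S:tangle} (Lemma \ref{L:key3} and Corollary \ref{C:2SidedTangle}) that the paper deploys for the analogous difficulty in the $\beta_1=cc$ setting.
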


\begin{obs}\label{O:key2}
If $F_x$ satisfies (\ref{E:star}), then $F_{x_\alpha}$ satisfies (\ref{E:star}) for some $\alpha\in\mathscr{A}_{x,u}$.
\end{obs}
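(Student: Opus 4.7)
The plan is to reduce to the prime setting already handled by Lemma \ref{L:key1}, by working inside a single prime summand of the boundary connect sum decomposition of $F_x$. Write the given decompositions $F_x=\natural_{i\in I}F_i$ and $K=\#_{i\in I}K_i$. By hypothesis (\ref{E:star}), each $F_i$ is 1-sided with $\beta_1(F_i)=\beta_1(K_i)$. Since $\beta_1(K_i)=\min\{cc(K_i),2g(K_i)\}$ and this minimum is realized by the 1-sided surface $F_i$, we must actually have $\beta_1(K_i)=cc(K_i)$; otherwise every $\beta_1$-minimizer for $K_i$ would be a 2-sided Seifert surface, contradicting the existence of $F_i$. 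Because $F_x$ is 1-sided, it is not a disk, so $I\neq\varnothing$, and each $K_i$ is nontrivial and prime.

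Next I would fix any $i\in I$ and localize. As noted at the start of \textsection\ref{S:tangle}, the diagrammatic connect sum structure of the alternating diagram $D$ realizes $K=\#_{i\in I}K_i$ by Menasco's theorem; consequently $D$ restricts to a reduced alternating prime diagram $D_i$ of $K_i$, and $F_x$ restricts on that side of a decomposing sphere to the 1-sided essential state surface $F_i$ from $D_i$, with $\beta_1(F_i)=\beta_1(K_i)=cc(K_i)$. This is exactly the setting of \textsection\ref{S:3new} for the pair $(D_i,F_i)$, so Lemma \ref{L:key1} applies and produces a $u^-$ type vertical arc $\alpha\subset F_i$ such that $F_i\backslash\backslash\alpha$ satisfies (\ref{E:star}).

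Finally I would reassemble. The arc $\alpha$, viewed in $F_x$, lies in $\mathscr{A}_{x,u}$; cutting is local, so
\[F_{x_\alpha} \;=\; (F_i\backslash\backslash\alpha)\;\natural\;\bigl(\natural_{j\in I\setminus\{i\}}F_j\bigr),\qquad K_\alpha \;=\; K_{i,\alpha}\;\#\;\bigl(\#_{j\neq i}K_j\bigr).\]
Concatenating the (\ref{E:star})-decomposition of $F_i\backslash\backslash\alpha$ supplied by Lemma \ref{L:key1} with the remaining summands $F_j$ ($j\neq i$) from the hypothesis gives a boundary connect sum decomposition of $F_{x_\alpha}$ in which every summand is 1-sided with $\beta_1$ equal to $\beta_1$ of its boundary knot. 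That is precisely (\ref{E:star}) for $F_{x_\alpha}$.

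The only real subtlety I expect is the second step: verifying that each prime summand $F_i$ genuinely is a state surface from a prime reduced alternating diagram of $K_i$, so that Lemma \ref{L:key1} applies to it verbatim. This is exactly the consequence of Menasco's work already flagged in the preamble of \textsection\ref{S:tangle}, and once it is invoked the rest of the argument is a bookkeeping exercise about concatenating boundary connect sum decompositions.
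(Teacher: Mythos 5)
Your argument is correct and matches what the paper intends: the paper states Observation \ref{O:key2} without proof, merely asserting that Lemma \ref{L:key1} ``generalizes to this setting,'' and your localize-to-a-prime-summand, apply-Lemma-\ref{L:key1}, then reassemble-the-decompositions argument supplies exactly the details that generalization requires, with the hypotheses of \textsection\ref{S:3new} (reduced prime alternating $D_i$, $F_i$ 1-sided, $\beta_1(F_i)=\beta_1(K_i)=cc(K_i)$) correctly verified for each summand. The one degenerate case worth a passing remark is when the chosen summand $F_i$ is a mobius band, so that $F_i\backslash\backslash\alpha$ is a disk and simply drops out of the reassembled boundary connect sum; the resulting decomposition of $F_{x_\alpha}$ still satisfies (\ref{E:star}), so your conclusion stands.
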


Before moving to the main theorem, we mention an application of Observation \ref{O:key2}.  Namely, given a reduced alternating diagram $D$ of a prime alternating knot $K$ satisfying (\ref{E:star}), every 1-sided state surface $F_x$ from $D$ with $\beta_1(F_x)=\beta_1(K)$ can be obtained from a minimal splice-unknotting sequence for $D$, using the construction behind Theorem \ref{T:ItoLeq}.  Thus, a list of all minimal-length splice-unknotting sequences for $D$ conveys a list of all minimal-complexity 1-sided state surfaces from $D$. Unfortunately, the list of such sequences grows rather quickly with crossings. The data through 9 crossings is posted at \cite{tkcom}.

\begin{theorem}
Suppose that $D$ is an alternating diagram whose underlying knot $K$ is nontrivial and either is prime or satisfies (\ref{E:dagger}).  Then $u^-(D)=u^-(K)=cc(K)$.
\end{theorem}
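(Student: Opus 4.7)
The plan is to establish $u^-(D) \leq cc(K)$, since Corollary \ref{C:ItoLeq} already gives $cc(K) \leq u^-(K) \leq u^-(D)$. I proceed by induction on $cc(K)$, assuming WLOG that $D$ is reduced (nugatory crossings can be stripped off by $RI^-$ splices, which contribute nothing to $u^-(D)$).

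The inductive step splits according to whether $K$ is prime. If $K$ satisfies (\ref{E:dagger}) but is not prime, then $K = K_1 \# \cdots \# K_r$ with each $K_i$ prime and nontrivial, and since $D$ is alternating it realizes this as a diagrammatic connect sum $D = D_1 \# \cdots \# D_r$. Ito-Takimura's additivity of $u^-$ under diagrammatic connect sum gives $u^-(D) = \sum_i u^-(D_i)$, while Proposition \ref{P:ConnSum} combined with (\ref{E:dagger}) gives $cc(K) = \sum_i cc(K_i)$. Since each $K_i$ is prime with $cc(K_i) < cc(K)$, the inductive hypothesis applied to each $D_i$ yields the desired equality.

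If $K$ is prime, I seek a suitable state surface $F_x$ from $D$ with $\beta_1(F_x) = cc(K)$ and an $\alpha \in \mathscr{A}_{x,u}$ such that $K_\alpha$ falls within the inductive hypothesis with $cc(K_\alpha) = cc(K) - 1$. When $\beta_1(K) < cc(K)$, Lemma \ref{L:key321} supplies such $F_x$ and $\alpha$ with $K_\alpha$ prime. When $\beta_1(K) = cc(K)$, take $F_x$ from Corollary \ref{C:AK}, and use Lemma \ref{L:key1} to produce $\alpha$ for which $F_{x_\alpha}$ satisfies (\ref{E:star}); Observation \ref{O:star} and the discussion following it then give $cc(K_\alpha) = \beta_1(F_{x_\alpha}) = cc(K) - 1$ and ensure $K_\alpha$ satisfies (\ref{E:dagger}). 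In either subcase, the inductive hypothesis applied to the alternating diagram $D_\alpha$ yields $u^-(D_\alpha) \leq cc(K) - 1$, and prepending the $u^-$ splice at $\alpha$ gives a splice-unknotting sequence for $D$ with at most $cc(K)$ many $u^-$ splices.

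The main obstacle is the prime subcase $\beta_1(K) = cc(K)$, exemplified by $9_{10}$: \emph{every} available $u^-$ splice of an optimal state surface can produce a non-prime knot, so the induction cannot remain in the prime world. This is precisely why the theorem permits (\ref{E:dagger}) as an alternative to primeness, and why Sections \ref{S:tangle} and \ref{S:3} develop the tangle-subsurface machinery underlying Lemma \ref{L:key1}: to guarantee that any such non-prime $K_\alpha$ automatically satisfies (\ref{E:dagger}), keeping the induction alive.
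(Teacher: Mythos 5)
Your argument is correct in substance and follows the same inductive skeleton as the paper: induction on $cc(K)$, the squeeze against Corollary \ref{C:ItoLeq}, the dichotomy $\beta_1(K)<cc(K)$ versus $\beta_1(K)=cc(K)$, with Lemma \ref{L:key321} handling the first subcase and Lemma \ref{L:key1} together with (\ref{E:star}) handling the second. Where you genuinely diverge is the non-prime case. The paper never decomposes $D$ into diagrammatic connect summands: it records that Proposition \ref{P:cutmin} and Lemma \ref{L:key1} generalize to the non-prime setting (Observations \ref{O:newcut2} and \ref{O:key2}), finds a single arc $\alpha$ with $F_{x_\alpha}$ satisfying (\ref{E:star}), and reruns the one-splice computation (\ref{E:Main1}) with the same squeeze. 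You instead write $D=D_1\#\cdots\#D_r$ over the prime summands of $K$, invoke Ito--Takimura's additivity $u^-(D)=\sum_i u^-(D_i)$ and Corollary \ref{C:ConnSum} for $cc(K)=\sum_i cc(K_i)$, and apply the inductive hypothesis to each prime $D_i$. This is a legitimate alternative and arguably a cleaner reduction to the prime case, but it buys that cleanliness with two external inputs the paper deliberately keeps out of its proof: Menasco's theorem that every connect sum decomposition of an alternating knot is realized diagrammatically in a given alternating diagram (the paper explicitly notes it ``will not need this fact''), and the additivity of $u^-$ under diagrammatic connect sum, which the paper cites only as motivation. Either way the logical content is the same; the paper's route keeps everything internal to the tangle-subsurface machinery of \textsection\ref{S:tangle}.

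One omission to repair: you never treat the base case $cc(K)=1$, and your inductive step does not degenerate gracefully to it. When $cc(K)=1$ the spliced knot $K_\alpha$ is the unknot, to which the inductive hypothesis does not apply (and Lemma \ref{L:key1}, whose proof via Proposition \ref{P:cutmin} uses $\beta_1(F_x)>1$, is not available); you need the separate, easy fact that an alternating diagram of the unknot has splice-unknotting number $0$, i.e.\ reduces to the trivial diagram by $RI$ moves, which is how the paper closes its base case.
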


\begin{proof}
We argue by induction on $cc(K)$. In all cases, by Theorem \ref{T:AK}, $D$ has a 1-sided state surface $F_x$ that satisfies $\beta_1(F_x)=cc(K)$.  In the base case, $F$ is a mobius band, which, cut at any crossing, becomes a disk; thus $u^-(D)=u^-(K)=1=cc(K)$.\footnote{This uses the fact that any alternating diagram of the unknot can be reduced to the trivial diagram by $RI$ moves.}  


For the inductive step, let $D$ be an alternating diagram of a knot $K$ with $cc(K)\geq 2$, where $K$ is prime or satisfies (\ref{E:dagger}). Assume that whenever $D'$ is an alternating diagram of a nontrivial knot $K'$ with $cc(K')<cc(K)$, and $K'$ is prime or satisfies (\ref{E:dagger}), then $u^-(D')=u^-(K')= cc(K')$. 

Assume first that $\beta_1(K)<cc(K)$. Then $K$ does not obey (\ref{E:dagger}), so by assumption $K$ is prime.  In this case, Lemma \ref{L:key321} provides a state surface $F_x$ and a vertical arc $\alpha\in\mathscr{A}_{x,u}$ such that $F_{x_\alpha}$ is 1-sided with $\beta_1(F_{x_\alpha})=cc(K_\alpha)$, and $K_\alpha$ is prime. Hence:
\begin{align}\label{E:Main1}
\begin{split}
cc(K)&=
\beta_1(F_x)=
\beta_1(F_{x_\alpha})+1=cc(K_\alpha)+1
=
u^-(D_\alpha)+1\\
&\geq u^-(D)\\
&\geq u^-(K).
\end{split}
\end{align}
Corollary \ref{C:ItoLeq} gives the reverse inequality, $cc(K)\leq u^-(K)$. Thus, $cc(K)=u^-(K)$. Also, $cc(K)\geq u^-(D)\geq u^-(K)$ by (\ref{E:Main1}).  Therefore, $u^-(D)=u^-(K)=cc(K)$.

Otherwise, $\beta_1(K)=cc(K)$.  Then, if $K$ is prime, $K$ satisfies (\ref{E:dagger}); also, by assumption, if $K$ is not prime, then $K$ satisfies (\ref{E:dagger}).  Thus, $K$ satisfies (\ref{E:dagger}).
Use Observation \ref{O:daggerstar} to obtain a state $x$ of $D$ such that $F_x$ satisfies (\ref{E:star}).  Then, by Observation \ref{O:key2}, there exists $\alpha\in\mathscr{A}_{x,u}$ such that $F_{x_\alpha}$ satisfies (\ref{E:star}).  Since $F_{x_\alpha}$ satisfies (\ref{E:star}), it follows that $K_\alpha$ satisfies (\ref{E:dagger}). Therefore, by repeating the computation (\ref{E:Main1}), with the subsequent application of Corollary \ref{C:ItoLeq} and squeeze argument, we can conclude in this final case that $u^-(D)=u^-(K)=cc(K)$.
\end{proof}

In particular, we have proven:

\begin{theorem}[Theorem \ref{T:main}]
If $D$ is a prime alternating diagram of a nontrivial knot $K$, then $u^-(D)=u^-(K)=cc(K)$.
\end{theorem}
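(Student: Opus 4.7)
The plan is to derive this theorem as an immediate specialization of the preceding (unnumbered) theorem, which already establishes $u^-(D) = u^-(K) = cc(K)$ for any alternating diagram of a nontrivial knot $K$ that is either prime or satisfies condition~(\ref{E:dagger}). First I would verify that the hypothesis of the present theorem implies the hypothesis of that preceding theorem: namely, that a prime alternating diagram has a prime underlying knot. This is classical, via Menasco's theorem \cite{men84}, which shows that for reduced alternating diagrams, primality of the diagram and primality of the knot coincide (any essential 2-sphere giving a connect-sum decomposition can be isotoped to meet the projection sphere in a single circle, producing a diagrammatic connect sum). Hence the first disjunct of the earlier hypothesis is automatically satisfied, and the conclusion transfers verbatim.

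A small preliminary step I would include is the reduction to the reduced case. If $D$ happens not to be reduced, I would first remove any nugatory crossings by Reidemeister~I moves, obtaining a reduced prime alternating diagram $D'$ of the same knot $K$. Each such simplification corresponds to an $RI^-$-type splice and so contributes nothing to $u^-$; hence $u^-(D) = u^-(D')$, and Menasco's theorem then applies to $D'$ to supply primality of $K$. At this point the preceding theorem applies directly to $D'$, and the equality pulls back to $D$.

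I do not expect any serious obstacle: all the substantive work has been absorbed into the preceding theorem, whose proof proceeds by induction on $cc(K)$, handling the base case (mobius band, where $u^-(D) = 1 = cc(K)$) trivially and splitting the inductive step according to whether $\beta_1(K) < cc(K)$ (in which case Lemma~\ref{L:key321} produces a vertical arc $\alpha$ with $F_{x_\alpha}$ realizing $cc(K_\alpha)$ and $K_\alpha$ prime, so induction applies) or $\beta_1(K) = cc(K)$ (in which case Observations~\ref{O:daggerstar} and~\ref{O:key2} produce $F_{x_\alpha}$ satisfying (\ref{E:star}), so $K_\alpha$ satisfies (\ref{E:dagger}) and induction again applies). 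The only check peculiar to the present corollary is the translation between ``prime alternating diagram'' and ``alternating diagram of a prime knot,'' which Menasco's theorem handles immediately.
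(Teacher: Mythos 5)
Your proposal matches the paper's approach exactly: the paper derives this statement as an immediate specialization ("In particular, we have proven") of the preceding unnumbered theorem, with the translation from "prime alternating diagram" to "prime underlying knot" left implicit. Your explicit invocation of Menasco's theorem for that translation, and your handling of non-reduced diagrams via $RI^-$ splices, simply fill in details the paper takes for granted.
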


\section{Computation}\label{S:Compute}

Using the fact that every prime alternating knot $K$ satisfies $u^-(K)=cc(K)$, we will construct a list $D_{\text{cc}}$ of dictionaries $D_{\text{cc}}[n]$, $n=3,4,5,\hdots$, in which to look up prime alternating knots by name and crossing number and find their crosscap numbers. Everything is coded in python. All data is available at \cite{tkcom}.  The basic idea for constructing $D_{\text{cc}}$ is this. 

First, using data imported from \cite{regina,kinfo}, we construct a list $D_G$ of dictionaries $D_G[n]$ in which to look up a prime alternating knot $K$ by name and crossing number and find a Gauss code $G=D_G[n][K]$ for a reduced alternating diagram $D$ of $K$. 

Next, we write a list $D_{\text{splice}}$ of dictionaries $D_{\text{splice}}[n]$ which associates to each $n$-crossing prime alternating knot $K$ a list of $n$ lists of knot names. For each knot $K$ the dictionary $D_G[n][K]$ provides a Gauss code, which describes a diagram $D$. Each of the $n$ lists in $D_{\text{splice}}[n][K]$ describes the connect sum decomposition of the diagram obtained $D$ by the $u^-$ type splice at one of the crossings of $D$.  

We then define a list $D_{u^-}$ of dictionaries $D_{u^-}[i]$ recursively, first setting $D_{u^-}[0][\text{\textquoteleft}1_0\text{\textquoteright}]=0$. Then for each $K$ and $n$ as above, we compute:
\[D_{u^-}[n][K]=1+\min_{i=1,\hdots,n}\sum_{j=0}^{\text{len}(D_{\text{splice}}[n][K][i])}D_{u^-}\left[\text{len}(D_{\text{splice}}[n][K][i][j])\right]\left[D_{\text{splice}}[n][K][i][j]\right]\]
Each new dictionary $D_{u^-}[n]$ records the invariant $u^-(K)$ for all prime alternating knots $K$ with $n$ crossings.  
Finally, using Theorem \ref{T:main}, we copy $D_{u^-}[i]$ for all $i\geq 3$ to construct a list $D_{\text{cc}}$ of dictionaries $D_{\text{cc}}[i]$ which record the crosscap numbers of all prime alternating knots.


The main technical challenge is that a given alternating knot can have many distinct alternating diagrams, each of which has its own unique reduced Gauss code. Thus, given a Gauss code (say, resulting from a $u^-$ type splice) its reduced form may or may not appear in $D_G$; it may not be obvious which knot the code represents. In order to solve this problem, we construct a list $D_{\text{DT}}$ of dictionaries $D_{\text{DT}}[n]$ in which to look up certain DT codes (one for each prime alternating diagram) and find the name of the associated knot. 

After some background, we give more details regarding the construction of $D_G$, $D_{\text{DT}}$, $D_{\text{splice}}$, $D_{u^-}$, and $D_{\text{cc}}$.  Of these constructions, the most computationally expensive is that of $D_{\text{DT}}$. These lists of dictionaries are among the data posted at \cite{tkcom}.

\subsection{Basics of Gauss and DT codes}\label{S:GBasics}
For an arbitrary knot diagram $D$, one obtains a Gauss code $G$ as follows.  First, choose an orientation and a starting point (away from crossings). Then, moving along $D$ accordingly, label the crossings of $D$ as $1,\hdots, n$, where $n$ is the number of crossings in $D$, according to the order in which they first appear along $D$.  Also, record all crossings of $D$, in order, as a word of length $2n$ in which each character $-n,\hdots, -1, 1,\hdots, n$ appears exactly once: the entry in the Gauss code corresponding to the overpass (resp. underpass) at the crossing with label $i$ is $i$ (resp. $-i$). Note that $D$ is reduced if and only if any Gauss code from $D$ has no cyclically consecutive entries $i$, $-i$.

Working exclusively with alternating knots and regarding mirror images as equivalent renders the signs in the Gauss code redundant. Thus, it makes sense to omit these signs, as we will do from now on. 

If $G=[c_1,c_2,\hdots,c_{2n}]$ is a Gauss code, then for each $r=1,\hdots,n$ there exist odd $i$ and even $j$ with $c_{i}=r=c_{j}$.  Thus, for each $s=1,\hdots,n$, there is a unique even integer $2\leq j(s)\leq 2n$ with $c_{j(s)}=c_{2s-1}$. The Dowker-Thistlethwaite code associated to $G$ is $[j(1),j(2),\hdots,j(n)]$. For example, the DT code abbreviating the Gauss code $[1,2,3,1,2,3]$ is $[4,6,2]$, since $c_1=1=c_4$, $c_3=3=c_6$, and $c_5=2=c_2$. The main advantage of DT codes over Gauss codes is their length; DT codes are useful when writing dictionaries.

Given a Gauss code $G$ of length $2n$, one can determine all the Gauss codes from the same diagram, but with different choices of starting point and/or orientation, by permuting and/or reversing the $2n$ characters in the Gauss code arbitrarily, and then permuting the $n$ crossing labels so that smaller labels always precede larger ones. (That is, {\it act dihedrally} on $G$ and then {\it relabel}.) Among the resulting codes, one, say $Y$, is lexicographically minimal. Call $Y$ the {\it reduced form} of $G$. Say that $G$ is {\it reduced} if its underlying diagram is reduced and if $G$ is its own reduced form.

For any reduced Gauss code $G$ which represents a prime alternating knot diagram, there is, up to isotopy and reflection, a unique knot diagram $D$ whose reduced Gauss code is $G$. (There may be several choices of basepoint and orientation on $D$ that give $G$.)

A reduced Gauss code $G$ of a knot $K$ represents a connect sum if and only if $G=w_1w_2w_3$, where $w_2$ is a nonempty proper subword of $G$ that shares no characters with $w_1$ nor $w_3$.  After relabeling (so that smaller labels always precede larger ones), $w_2$ and $w_1w_3$ give Gauss codes for two, not necessarily prime, connect summands of $K$.  Continuing in this way eventually gives the connect sum decomposition of $K$.

\begin{figure}[b!]
\labellist
\small\hair 4pt
\pinlabel {1} [c] at 310 425
\pinlabel {1} [c] at 255 427
\pinlabel {2} [c] at 200 390
\pinlabel {2} [c] at 185 345
\pinlabel {3} [c] at 225 443
\pinlabel {3} [c] at 225 480
\pinlabel {4} [c] at 360 355
\pinlabel {4} [c] at 360 390
\pinlabel {5} [c] at 520 390
\pinlabel {5} [c] at 535 345
\pinlabel {6} [c] at 495 443
\pinlabel {6} [c] at 500 480
\pinlabel {7} [c] at 412 423
\pinlabel {7} [c] at 467 423
\pinlabel {1} [c] at 850 445
\pinlabel {1} [c] at 850 405
\pinlabel {2} [c] at 780 360
\pinlabel {2} [c] at 745 375
\pinlabel {3} [c] at 772 455
\pinlabel {3} [c] at 822 464
\pinlabel {4} [c] at 900 373
\pinlabel {4} [c] at 955 373
\pinlabel {5} [c] at 1081 358
\pinlabel {5} [c] at 1114 376
\pinlabel {6} [c] at 1037 463
\pinlabel {6} [c] at 1090 455
\pinlabel {7} [c] at 1005 407
\pinlabel {7} [c] at 1005 443
\pinlabel {7} [c] at 360 155
\pinlabel {14} [c] at 374 72
\pinlabel {3} [c] at 235 72
\pinlabel {1} [c] at 230 45
\pinlabel {9} [c] at 280 -55
\pinlabel {8} [c] at 208 50
\pinlabel {2} [c] at 130 55
\pinlabel {4} [c] at 330 45
\pinlabel {12} [c] at 595 60
\pinlabel {5} [c] at 435 -55
\pinlabel {6} [c] at 517 50
\pinlabel {11} [c] at 485 70
\pinlabel {10} [c] at 385 45
\pinlabel {13} [c] at 490 45
\pinlabel {7} [c] at 910 125
\pinlabel {14} [c] at 944 99
\pinlabel {3} [c] at 832 92
\pinlabel {1} [c] at 812 18
\pinlabel {9} [c] at 830 -32
\pinlabel {8} [c] at 745 50
\pinlabel {2} [c] at 730 75
\pinlabel {4} [c] at 876 25
\pinlabel {12} [c] at 1122 80
\pinlabel {5} [c] at 1025 -28
\pinlabel {6} [c] at 1116 47
\pinlabel {11} [c] at 1023 92
\pinlabel {10} [c] at 980 25
\pinlabel {13} [c] at 1045 15
\pinlabel {Crossings around $A$-faces:} [l] at 130 295
\pinlabel {$\big[[1, 4, 7], [2, 1, 3], [5, 4, 2, 3, 6], [5, 6, 7]\big]$}  [l] at 130 265
\pinlabel {Crossings around $B$-faces:} [r] at 1160 295
\pinlabel {$\big[[1, 2, 4], [2, 3], [4, 5, 7], [5, 6], [6, 3, 1, 7]\big]$}  [r] at 1160 265
\pinlabel {Edges around $A$-faces:} [l] at 130 215
\pinlabel {$\big[[14, 4, 10], [8, 1, 3], [12, 5, 9, 2, 7], [13, 6, 11]\big]$}  [l] at 130 185
\pinlabel {Edges around $B$-faces:} [r] at 1160 215
\pinlabel {$\big[[4, 1, 9], [2, 8], [10, 5, 13], [6, 12], [11, 7, 3, 14]\big]$}  [r] at 1160 185
\endlabellist
\centerline{\includegraphics[width=2.75in]{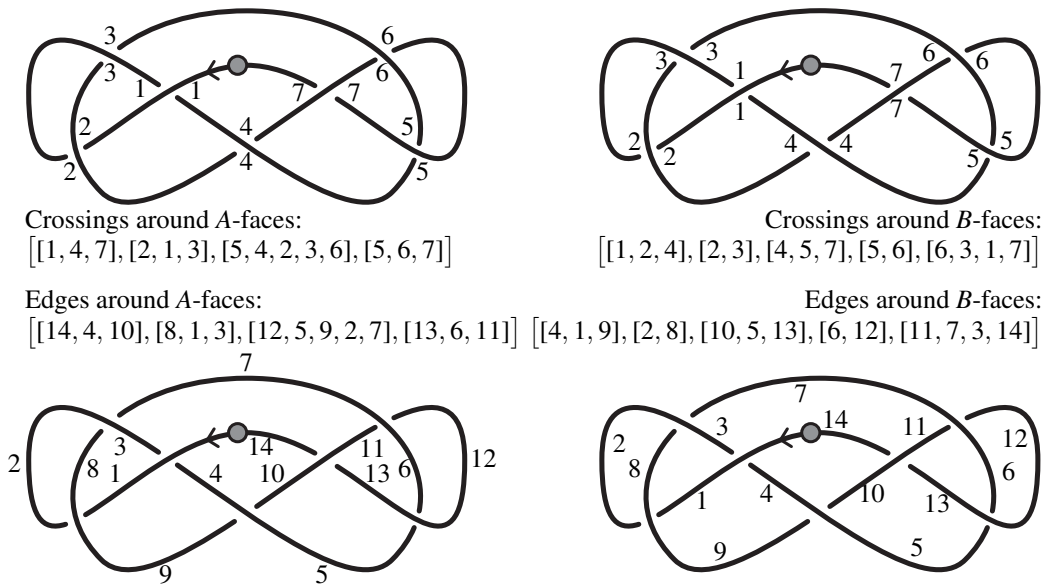}\hspace{.25in}\includegraphics[width=2.75in]{Knot77}}~\\~\\
\centerline{\includegraphics[width=2.75in]{Knot77}\hspace{.25in}\includegraphics[width=2.75in]{Knot77}}
\caption{Face data from the diagram of $7_7$ with Gauss code $[1,2,3,1,4,5,6,3,2,4,7,6,5,7]$.}
\label{Fi:FaceData}
\end{figure}

\subsection{Face data and flypes}\label{S:GFace}
We have imported Gauss codes from \cite{regina,kinfo}, one for each prime alternating knot through $n$ crossings. We we have organized this data as a list, $D_G$, of dictionaries, $D_G[n]$, so that one can look up the name (e.g. \textquoteleft$7_4$\textquoteright) of any $n$-crossing prime alternating knot $K$ in $D_G[n]$ and find a Gauss code $D_G[n][K]$ for a reduced alternating diagram of $K$.  Then we clean up this data by replacing each Gauss code with its reduced form. Finally, we augment this data by replacing each entry in each dictionary, a Gauss code $G$, with the list $[G,S]$: here, $S$ lists the signs of the crossings of the diagram associated to $G$, with the convention that the first crossing is an overpass with a positive sign. Although these signs are encoded by $G$, they take some time to compute; recording them now ensures that we only need to compute them this once.

We now set about constructing a list $D_{\text{DT}}$ of dictionaries $D_{\text{DT}}[n]$ in which to look up certain DT codes (one code for each prime alternating diagram with $n$ crossings) and find the name of the associated knot. 
The key is to find a list $D_0,\hdots,D_k$ of all reduced alternating diagrams of each prime alternating knot $K$.
To do so, we need to use the flyping theorem, conjectured by Tait \cite{tait} and proven by Menasco-Thistlethwaite \cite{mt91,mt93}. Here is how to do this.

Let $G_0$ be a reduced Gauss code of a prime alternating knot. If $G_0$ has length $2n$, then the associated projection has $n$ crossings, which are joined by $2n$ {\it edges} (in the sense that the projection is a 4-valent graph). Also, the projection cuts $S^2$ into $n+2$ {\it black} and {\it white} disks, or {\it faces}.  The {\it face data} from $G_0$ records which edges and crossings are incident to each face, proceeding counterclockwise around the boundary of the face.\footnote{For edges, the data at \cite{tkcom} also records the orientation of the edge with a sign: $+$ if the edge runs counterclockwise along the boundary of the face, $-$ if it runs clockwise.} It is convenient to partition this data into four sets, two for crossings and two for edges, each split between data from the black faces and from the white. Figure \ref{Fi:FaceData} shows an example.

This face data allows one to identify possible flype moves on the diagram. To do this, define four sets as follows. The first two sets, $EE_B$ and $EE_W$
, consist of pairs of distinct edges which lie on the boundary of the same (black or white, resp.) face and which do not share any endpoints.  The other two sets, $ECE_B$ and $ECE_W$
, consist of triples, each triple consisting of two edges and a crossing, such that neither edge is incident to the crossing and the two edges abut the (two black or two white, resp.) faces incident to the crossing.
Associate to each element of $EE_B$ ($EE_W$, resp.) an arc whose interior lies in a black (white) face of $S^2\backslash\backslash D$ and whose endpoints lie on non-incident edges of $D$. 
Likewise, associate to each element of $ECE_B$ ($ECE_W$, resp.) an arc whose interior intersects $D$ in a single point, a crossing, and otherwise lies entirely in two black (white) faces of $S^2\backslash\backslash D$, and whose endpoints lie on edges of $D$ which are not incident to this crossing.  Thus, associated to each element of $EE_B\cap ECE_W$ ($EE_W\cap ECE_B$, resp.) is a simple closed curve which intersects one black (white) face of $S^2\backslash\backslash D$ and two white (black) faces of $S^2\backslash\backslash D$, and which intersects $D$ transversally in two edges $e_1$, $e_2$ and one crossing $c$, none of them incident. 
In this way, each element of $EE_B\cap ECE_W$ identifies a possible flype move on $D$, as does each element of $EE_W\cap ECE_B$.

\begin{figure}[b!]
\labellist
\small\hair 4pt
\pinlabel {1} [c] at 60 480
\pinlabel {3} [c] at 168 387
\pinlabel {2} [c] at 192 487
\pinlabel {4} [c] at 255 355
\pinlabel {5} [c] at 413 355
\pinlabel {6} [c] at 380 505
\pinlabel {7} [c] at 322 408
\pinlabel {1} [c] at 157 211
\pinlabel {2} [c] at 122 187
\pinlabel {3} [c] at 134 277
\pinlabel {4} [c] at 219 160
\pinlabel {5} [c] at 448 255
\pinlabel {6} [c] at 320 260
\pinlabel {7} [c] at 340 160
\pinlabel {1} [c] at 755 329
\pinlabel {2} [c] at 713 300
\pinlabel {3} [c] at 730 388
\pinlabel {4} [c] at 830 275
\pinlabel {5} [c] at 1010 258
\pinlabel {6} [c] at 1005 388
\pinlabel {7} [c] at 905 323
\pinlabel {1} [c] at 1462 320
\pinlabel {2} [c] at 1355 255
\pinlabel {3} [c] at 1349 387
\pinlabel {4} [c] at 1490 395
\pinlabel {5} [c] at 1627 252
\pinlabel {6} [c] at 1632 385
\pinlabel {7} [c] at 1568 285
%
\endlabellist
\centerline{\includegraphics[width=5.5in]{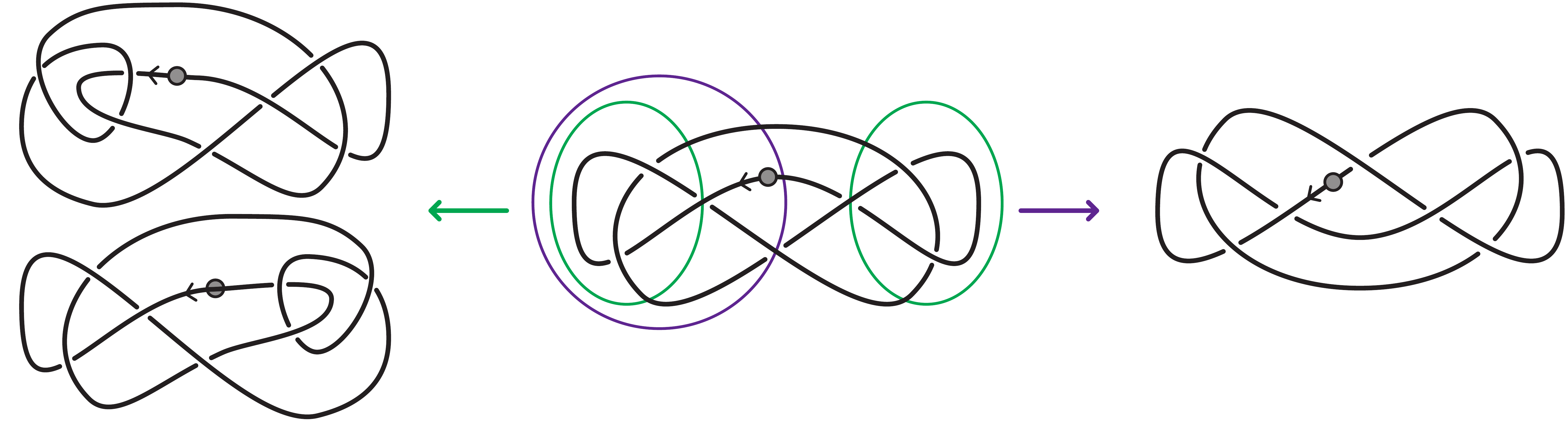}}
\caption{Four flype moves on the same diagram of $7_7$.}
\label{Fi:FlypeCode}
\end{figure}

The flype move changes the Gauss code by removing both $c$ terms, re-inserting them in the intervals of the Gauss code associated to $e_1$ and $e_2$, and then relabeling. More precisely, with $G=(c_1,\hdots,c_{2n})$, there exist indices $1\leq i_1,i_2\leq 2n-1$ such that $e_1$ joins $c_{i_1}$ and $c_{i_1+1}$, while $e_2$ joins $c_{i_2}$ and $c_{i_2+1}$. Assume without loss of generality that $i_1<i_2$. There are also two indices $1\leq j_1<j_2\leq 2n$ such that $c_{j_1}=c=c_{j_2}$. There are two explicit possibilities for the Gauss code resulting from the flype. 
If $i_1<j_1<i_2<j_2$, then the new Gauss code is 
\[(c_1,\hdots,c_{i_1},c,c_{i_1+1},\hdots,\widehat{c_{j_1}},\hdots,c_{i_2},c,c_{i_2+1},\hdots,\widehat{c_{j_2}},\hdots,c_{2n}),\] after relabeling. (The hats indicate entries to delete from the Gauss code.)
Otherwise, $j_1<i_1<j_2<i_2$, and the new Gauss code is \[(c_1,\hdots,\widehat{c_{j_1}},\hdots,c_{i_1},c,c_{i_1+1},\hdots,\widehat{c_{j_2}},\hdots,c_{i_2},c,c_{2_1+1},\hdots,c_{2n}),\] after relabeling. See Figure \ref{Fi:FlypeCode}. 
This is how we construct, for each element of $EE_B\cap ECE_W$ and $EE_W\cap ECE_B$, a Gauss code for the diagram produced by the associated flype move on $D$.

Given a Gauss code $G$ for an alternating diagram $D_0$ of a prime knot $K$, we are now ready to compute a list $L$ of $DT$ codes, one from each reduced alternating diagram of $K$. (Each $DT$ code will correspond to the reduced Gauss code of some diagram of $K$.) Begin by computing the reduced form $G_0$ of $G$, let $T_0$ be its DT code, and let $L=[T0]$.  Then compute $EE_B\cap ECE_W$ and $EE_W\cap ECE_B$ from $G_0$ to identify possible flype moves on $D_0$. Compute the reduced form of the Gauss code resulting from each flype move.  If $L$ does not already contain the DT code for this reduced Gauss code, then append that DT code.  After doing this for each possible flype move on $D_0$, repeat the process for each of the other diagrams described by the DT codes in $L$, appending any new DT codes to $L$.  The flyping theorem implies that this process will produce a list $L$ consisting of one DT code for each reduced alternating diagram of $K$.


Now we can build the dictionary $D_{\text{DT}}$: for each knot type $K$, say with Gauss code $G$, we compute the list $L$ as above from $G$, and then for each $T_i$ in $L$ we update the dictionary $D_{\text{DT}}$ with the entry $T_i:K$.
For example, for knots with seven crossings, $D_{\text{DT}}$ looks like:

\centerline{\begin{tabular}{||c|c||c|c||}\hline\hline
DT code&knot&DT code&knot\\ \hline
$[8, 10, 12, 14, 2, 4, 6]$&$7_1$&$[4, 10, 14, 12, 2, 8, 6]$&$7_2$\\
$[6, 10, 12, 14, 2, 4, 8]$&$7_3$&$[6, 12, 10, 14, 2, 4, 8]$&$7_4$\\ 
$[4, 10, 12, 14, 2, 8, 6]$&$7_5$&$[4, 10, 14, 12, 2, 6, 8]$&$7_5$\\
$[4, 8, 12, 2, 14, 6, 10]$&$7_6$&$[4, 8, 12, 10, 2, 14, 6]$&$7_6$\\
$[4, 8, 10, 12, 2, 14, 6]$&$7_7$&$[4, 8, 12, 14, 2, 6, 10]$&$7_7$\\ \hline\hline
\end{tabular}}

The dictionary list $D_{\text{DT}}$ through at least 13 crossings is available at \cite{tkcom}.

\subsection{Splices from face data}\label{S:GSplice}

\begin{figure}[b!]
\labellist 
\small\hair 4pt
\pinlabel {1} [c] at 818 425
\pinlabel {2} [c] at 718 360
\pinlabel {3} [c] at 730 440
\pinlabel {4} [c] at 863 355
\pinlabel {5} [c] at 1010 358
\pinlabel {6} [c] at 1000 440
\pinlabel {7} [c] at 912 423
\pinlabel {1} [c] at 1413 425
\pinlabel {2} [c] at 1313 360
\pinlabel {3} [c] at 1325 440
\pinlabel {5} [c] at 1605 358
\pinlabel {4} [c] at 1595 440
\pinlabel {6} [c] at 1507 423
\endlabellist
\centerline{\includegraphics[width=5.5in]{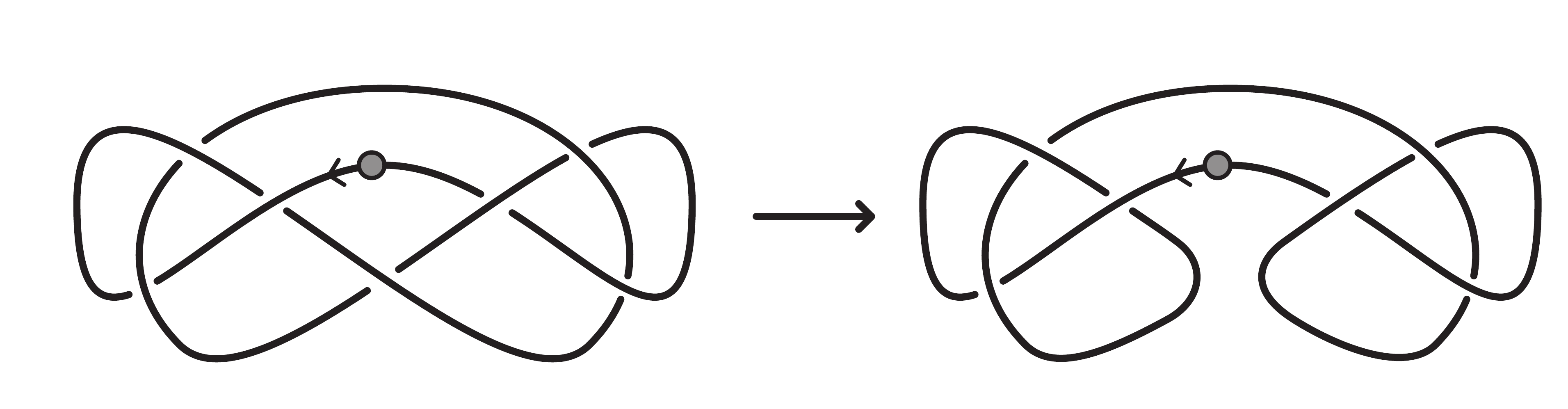}}
\caption{Given the diagram of the knot $7_7$ with Gauss code $(1,2,3,1,4,5,6,3,2,4,7,6,5,7)$, splicing at crossing $4$ gives the diagram of $3_1\#3_1$ with Gauss code 
$(1,2,3,1,2,3,4,5,6,4,5,6)$.}
\label{Fi:GaussSplice}
\end{figure}

The next step is to construct a dictionary $D_{\text{splice}}$ in which one can look up any prime alternating knot $K$, say with crossing number $n$, and find $n$ lists of knot types, where each list describes the connect sum decomposition of the knot which results from splicing a given diagram for $K$ (the one described by its imported Gauss code) at one of its $n$ crossings. 

Recall that we have used our imported data to construct a list $D_G$ of dictionaries $D_G[n]$ which give us, for every prime alternating knot $K$ with crossing number $n$, the reduced Gauss code $G$ of some reduced alternating diagram $D$ of $K$ (and a list of the signs of the crossings in $D$). Given any $i=1,\hdots, n$, let $c=c_i$. We can write $G=w_1cw_2cw_3$, where $w_2$ is nonempty, as is at least one of $w_1$ or $w_3$. After relabeling, $w_1\overline{w_2}w_3$ is a Gauss code for the diagram obtained from $D$ via a $u^-$ type splice at $c$; $\overline{w_2}$ denotes the reverse of $w_2$. Let $G_i$ be the reduced form of this Gauss code.  

The Gauss codes $G_1,\hdots,G_n$ constructed in this way from $G$ are the reduced Gauss codes which describe the knot diagrams which result from each of the possible $u^-$ type splices on $D$.  
For each $i=1,\hdots, n$, decompose $G_i$ into its connect summands, as described in \textsection\ref{S:GBasics}.  Then compute the reduced Gauss code of each summand, look up the associated DT code in $D_{\text{DT}}$, and record the knot type. 
%
%
For example, for knots with seven crossings, $D_{\text{splice}}$ looks like:\\

\centerline{\begin{tabular}{||c|ccccccc||}\hline\hline
knot&splice&splice&splice&splice&splice&splice&splice\\ \hline
$7_1$&$0_1$&$0_1$&$0_1$&$0_1$&$0_1$&$0_1$&$0_1$\\ \hline
$7_2$&$6_1$&$5_1$&$5_1$&$6_1$&$6_1$&$6_1$&$6_1$\\ \hline
$7_3$&$6_1$&$3_1$&$3_1$&$3_1$&$3_1$&$6_1$&$6_1$\\ \hline
$7_4$&$6_2$&$3_1,3_1$&$6_2$&$6_2$&$6_2$&$6_2$&$6_2$\\ \hline
$7_5$&$6_2$&$5_2$&$5_2$&$4_1$&$4_1$&$4_1$&$6_2$\\ \hline
$7_6$&$6_1$&$5_2$&$5_2$&$6_2$&$6_2$&$6_3$&$6_3$\\ \hline
$7_7$&$6_2$&$6_3$&$6_3$&$3_1,3_1$&$6_3$&$6_3$&$6_2$\\ \hline\hline
\end{tabular}}



\subsection{Crosscap numbers from splice data}\label{S:GCc}

Finally, we are ready to construct a list $D_{u^-}$ of dictionaries $D_{u^-}[n]$, each listing $u^-(K)$ for the unknot and all prime alternating knots $K$ with $n$ crossings. Because all prime alternating knots $K$ satisfy $u^-(K)=cc(K)$ by Theorem \ref{T:main}, we can then copy these dictionaries to obtain the list $D_{\text{cc}}$ of dictionaries $D_{\text{cc}}[n]$ recording the crosscap numbers of all prime alternating knots with $n$ crossings, for $n\geq 3$.

First, let $D_{u^-}[0]=\left\{\text{\textquoteleft}
0_1\text{\textquoteright}:0\right\}$, with $D_{u^-}[1]=[{}]=D_{u^-}[2]$. Then starting with crossing number $n=3$ and increasing from there, compute $D_{u^-}[n]$ as follows. For each $K$ in $D_{\text{splice}}[n]$ and each $i=1,\hdots, n$, consider $D_{\text{splice}}[n][K][i]=[K'_{i,1},\hdots,K'_{i,m_i}]$. Each $K'_{i,j}$ has fewer crossings $n_{i,j}$ than $K$, so we can look up each $D_{u^-}[n_{i,j}][K'_{i,j}]$. This gives:
\[cc(K)=u^-(K)=D_{u^-}[n](K)=1+\min_{i=1,\hdots,n}\sum_{j=1}^{m_i}u^-(K'_{i,j})=1+\min_{i=1,\hdots,n}\sum_{j=1}^{m_i}D_{u^-}[n_{i,j}][K'_{i,j}].\]
In other words, we build the dictionary $D_{u^-}$ of splice-unknotting numbers inductively, by looking at the connect summands of the diagrams obtained by $u^-$-splices on a given diagram, looking up these summands' crosscap numbers in $D_{u^-}$, summing, minimizing, and adding 1.

\section*{Appendix: tables of crosscap numbers}

\subsection*{Crosscap numbers $n=cc(K)$ of 11-crossing prime alternating knots $K$}

\centerline{\begin{tabular}{||c|c||c|c||c|c||c|c||c|c||c|c||c|c||c|c||}\hline\hline
$K$&$n$&$K$&$n$&$K$&$n$&$K$&$n$&$K$&$n$&$K$&$n$&$K$&$n$&$K$&$n$\\ \hline
$11_{1}$&$5$
&$11_{2}$&$5$
&$11_{3}$&$5$
&$11_{4}$&$4$
&$11_{5}$&$5$
&$11_{6}$&$5$
&$11_{7}$&$4$
&$11_{8}$&$4$\\ \hline
$11_{9}$&$3$
&$11_{10}$&$4$
&$11_{11}$&$5$
&$11_{12}$&$5$
&$11_{13}$&$4$
&$11_{14}$&$5$
&$11_{15}$&$4$
&$11_{16}$&$5$\\ \hline
$11_{17}$&$5$
&$11_{18}$&$5$
&$11_{19}$&$5$
&$11_{20}$&$5$
&$11_{21}$&$4$
&$11_{22}$&$4$
&$11_{23}$&$5$
&$11_{24}$&$5$\\ \hline
$11_{25}$&$5$
&$11_{26}$&$5$
&$11_{27}$&$5$
&$11_{28}$&$5$
&$11_{29}$&$4$
&$11_{30}$&$5$
&$11_{31}$&$5$
&$11_{32}$&$5$\\ \hline
$11_{33}$&$4$
&$11_{34}$&$5$
&$11_{35}$&$5$
&$11_{36}$&$5$
&$11_{37}$&$4$
&$11_{38}$&$5$
&$11_{39}$&$4$
&$11_{40}$&$4$\\ \hline
$11_{41}$&$5$
&$11_{42}$&$5$
&$11_{43}$&$5$
&$11_{44}$&$5$
&$11_{45}$&$4$
&$11_{46}$&$4$
&$11_{47}$&$5$
&$11_{48}$&$5$\\ \hline
$11_{49}$&$5$
&$11_{50}$&$4$
&$11_{51}$&$5$
&$11_{52}$&$5$
&$11_{53}$&$4$
&$11_{54}$&$5$
&$11_{55}$&$4$
&$11_{56}$&$5$\\ \hline
$11_{57}$&$4$
&$11_{58}$&$4$
&$11_{59}$&$3$
&$11_{60}$&$4$
&$11_{61}$&$4$
&$11_{62}$&$3$
&$11_{63}$&$4$
&$11_{64}$&$5$\\ \hline
$11_{65}$&$4$
&$11_{66}$&$5$
&$11_{67}$&$4$
&$11_{68}$&$4$
&$11_{69}$&$5$
&$11_{70}$&$5$
&$11_{71}$&$5$
&$11_{72}$&$5$\\ \hline
$11_{73}$&$5$
&$11_{74}$&$3$
&$11_{75}$&$4$
&$11_{76}$&$5$
&$11_{77}$&$5$
&$11_{78}$&$5$
&$11_{79}$&$5$
&$11_{80}$&$5$\\ \hline
$11_{81}$&$4$
&$11_{82}$&$4$
&$11_{83}$&$4$
&$11_{84}$&$5$
&$11_{85}$&$5$
&$11_{86}$&$4$
&$11_{87}$&$5$
&$11_{88}$&$4$\\ \hline
$11_{89}$&$5$
&$11_{90}$&$4$
&$11_{91}$&$5$
&$11_{92}$&$4$
&$11_{93}$&$4$
&$11_{94}$&$5$
&$11_{95}$&$4$
&$11_{96}$&$5$\\ \hline
$11_{97}$&$3$
&$11_{98}$&$4$
&$11_{99}$&$4$
&$11_{100}$&$5$
&$11_{101}$&$5$
&$11_{102}$&$4$
&$11_{103}$&$4$
&$11_{104}$&$5$\\ \hline
$11_{105}$&$5$
&$11_{106}$&$4$
&$11_{107}$&$4$
&$11_{108}$&$4$
&$11_{109}$&$5$
&$11_{110}$&$4$
&$11_{111}$&$4$
&$11_{112}$&$5$\\ \hline
$11_{113}$&$4$
&$11_{114}$&$5$
&$11_{115}$&$4$
&$11_{116}$&$5$
&$11_{117}$&$5$
&$11_{118}$&$4$
&$11_{119}$&$4$
&$11_{120}$&$5$\\ \hline
$11_{121}$&$5$
&$11_{122}$&$5$
&$11_{123}$&$4$
&$11_{124}$&$5$
&$11_{125}$&$5$
&$11_{126}$&$5$
&$11_{127}$&$4$
&$11_{128}$&$5$\\ \hline
$11_{129}$&$4$
&$11_{130}$&$5$
&$11_{131}$&$5$
&$11_{132}$&$5$
&$11_{133}$&$4$
&$11_{134}$&$5$
&$11_{135}$&$5$
&$11_{136}$&$5$\\ \hline
$11_{137}$&$4$
&$11_{138}$&$5$
&$11_{139}$&$4$
&$11_{140}$&$3$
&$11_{141}$&$4$
&$11_{142}$&$3$
&$11_{143}$&$4$
&$11_{144}$&$4$\\ \hline
$11_{145}$&$4$
&$11_{146}$&$5$
&$11_{147}$&$5$
&$11_{148}$&$4$
&$11_{149}$&$5$
&$11_{150}$&$5$
&$11_{151}$&$5$
&$11_{152}$&$4$\\ \hline
$11_{153}$&$4$
&$11_{154}$&$4$
&$11_{155}$&$5$
&$11_{156}$&$4$
&$11_{157}$&$5$
&$11_{158}$&$4$
&$11_{159}$&$5$
&$11_{160}$&$5$\\ \hline
$11_{161}$&$3$
&$11_{162}$&$5$
&$11_{163}$&$4$
&$11_{164}$&$5$
&$11_{165}$&$4$
&$11_{166}$&$3$
&$11_{167}$&$5$
&$11_{168}$&$5$\\ \hline
$11_{169}$&$4$
&$11_{170}$&$5$
&$11_{171}$&$5$
&$11_{172}$&$5$
&$11_{173}$&$5$
&$11_{174}$&$4$
&$11_{175}$&$5$
&$11_{176}$&$5$\\ \hline
$11_{177}$&$4$
&$11_{178}$&$5$
&$11_{179}$&$3$
&$11_{180}$&$4$
&$11_{181}$&$4$
&$11_{182}$&$4$
&$11_{183}$&$5$
&$11_{184}$&$4$\\ \hline
$11_{185}$&$4$
&$11_{186}$&$5$
&$11_{187}$&$5$
&$11_{188}$&$3$
&$11_{189}$&$5$
&$11_{190}$&$4$
&$11_{191}$&$4$
&$11_{192}$&$4$\\ \hline
$11_{193}$&$4$
&$11_{194}$&$4$
&$11_{195}$&$3$
&$11_{196}$&$5$
&$11_{197}$&$5$
&$11_{198}$&$4$
&$11_{199}$&$4$
&$11_{200}$&$4$\\ \hline
$11_{201}$&$4$
&$11_{202}$&$5$
&$11_{203}$&$3$
&$11_{204}$&$4$
&$11_{205}$&$4$
&$11_{206}$&$3$
&$11_{207}$&$4$
&$11_{208}$&$5$\\ \hline
$11_{209}$&$5$
&$11_{210}$&$4$
&$11_{211}$&$4$
&$11_{212}$&$5$
&$11_{213}$&$5$
&$11_{214}$&$4$
&$11_{215}$&$4$
&$11_{216}$&$5$\\ \hline
$11_{217}$&$5$
&$11_{218}$&$5$
&$11_{219}$&$4$
&$11_{220}$&$4$
&$11_{221}$&$4$
&$11_{222}$&$4$
&$11_{223}$&$3$
&$11_{224}$&$4$\\ \hline
$11_{225}$&$3$
&$11_{226}$&$4$
&$11_{227}$&$5$
&$11_{228}$&$5$
&$11_{229}$&$4$
&$11_{230}$&$3$
&$11_{231}$&$4$
&$11_{232}$&$4$\\ \hline
\end{tabular}}
\centerline{\begin{tabular}{||c|c||c|c||c|c||c|c||c|c||c|c||c|c||c|c||}\hline
$K$&$n$&$K$&$n$&$K$&$n$&$K$&$n$&$K$&$n$&$K$&$n$&$K$&$n$&$K$&$n$\\ \hline
$11_{233}$&$5$
&$11_{234}$&$3$
&$11_{235}$&$4$
&$11_{236}$&$5$
&$11_{237}$&$4$
&$11_{238}$&$4$
&$11_{239}$&$5$
&$11_{240}$&$3$\\ \hline
$11_{241}$&$4$
&$11_{242}$&$3$
&$11_{243}$&$4$
&$11_{244}$&$5$
&$11_{245}$&$4$
&$11_{246}$&$3$
&$11_{247}$&$2$
&$11_{248}$&$5$\\ \hline
$11_{249}$&$4$
&$11_{250}$&$3$
&$11_{251}$&$5$
&$11_{252}$&$4$
&$11_{253}$&$5$
&$11_{254}$&$4$
&$11_{255}$&$5$
&$11_{256}$&$4$\\ \hline
$11_{257}$&$4$
&$11_{258}$&$3$
&$11_{259}$&$3$
&$11_{260}$&$3$
&$11_{261}$&$4$
&$11_{262}$&$4$
&$11_{263}$&$3$
&$11_{264}$&$5$\\ \hline
$11_{265}$&$4$
&$11_{266}$&$5$
&$11_{267}$&$5$
&$11_{268}$&$4$
&$11_{269}$&$4$
&$11_{270}$&$5$
&$11_{271}$&$5$
&$11_{272}$&$5$\\ \hline
$11_{273}$&$5$
&$11_{274}$&$5$
&$11_{275}$&$5$
&$11_{276}$&$5$
&$11_{277}$&$5$
&$11_{278}$&$4$
&$11_{279}$&$3$
&$11_{280}$&$4$\\ \hline
$11_{281}$&$4$
&$11_{282}$&$4$
&$11_{283}$&$5$
&$11_{284}$&$5$
&$11_{285}$&$5$
&$11_{286}$&$4$
&$11_{287}$&$5$
&$11_{288}$&$5$\\ \hline
$11_{289}$&$5$
&$11_{290}$&$4$
&$11_{291}$&$4$
&$11_{292}$&$5$
&$11_{293}$&$3$
&$11_{294}$&$4$
&$11_{295}$&$4$
&$11_{296}$&$4$\\ \hline
$11_{297}$&$5$
&$11_{298}$&$5$
&$11_{299}$&$4$
&$11_{300}$&$5$
&$11_{301}$&$5$
&$11_{302}$&$4$
&$11_{303}$&$4$
&$11_{304}$&$4$\\ \hline
$11_{305}$&$4$
&$11_{306}$&$4$
&$11_{307}$&$4$
&$11_{308}$&$3$
&$11_{309}$&$4$
&$11_{310}$&$3$
&$11_{311}$&$4$
&$11_{312}$&$4$\\ \hline
$11_{313}$&$3$
&$11_{314}$&$5$
&$11_{315}$&$5$
&$11_{316}$&$4$
&$11_{317}$&$4$
&$11_{318}$&$5$
&$11_{319}$&$5$
&$11_{320}$&$4$\\ \hline
$11_{321}$&$5$
&$11_{322}$&$5$
&$11_{323}$&$3$
&$11_{324}$&$4$
&$11_{325}$&$4$
&$11_{326}$&$5$
&$11_{327}$&$5$
&$11_{328}$&$5$\\ \hline
$11_{329}$&$5$
&$11_{330}$&$3$
&$11_{331}$&$4$
&$11_{332}$&$5$
&$11_{333}$&$3$
&$11_{334}$&$3$
&$11_{335}$&$4$
&$11_{336}$&$3$\\ \hline
$11_{337}$&$4$
&$11_{338}$&$3$
&$11_{339}$&$3$
&$11_{340}$&$4$
&$11_{341}$&$3$
&$11_{342}$&$2$
&$11_{343}$&$3$
&$11_{344}$&$5$\\ \hline
$11_{345}$&$4$
&$11_{346}$&$3$
&$11_{347}$&$4$
&$11_{348}$&$4$
&$11_{349}$&$5$
&$11_{350}$&$5$
&$11_{351}$&$5$
&$11_{352}$&$4$\\ \hline
$11_{353}$&$5$
&$11_{354}$&$4$
&$11_{355}$&$3$
&$11_{356}$&$4$
&$11_{357}$&$4$
&$11_{358}$&$2$
&$11_{359}$&$3$
&$11_{360}$&$3$\\ \hline
$11_{361}$&$3$
&$11_{362}$&$3$
&$11_{363}$&$3$
&$11_{364}$&$2$
&$11_{365}$&$3$
&$11_{366}$&$4$
&$11_{367}$&$1$
&&\\ \hline\hline
\end{tabular}}

\subsection*{Crosscap numbers $n=cc(K)$ of 12-crossing, prime alternating knots $K$}

\centerline{\begin{tabular}{||c|c||c|c||c|c||c|c||c|c||c|c||c|c||c|c||c|c||c|c||c|c||c|c||c|c||c|c||c|c||c|c||c|c||c|c||}\hline\hline
$K$&$n$&$K$&$n$&$K$&$n$&$K$&$n$&$K$&$n$&$K$&$n$&$K$&$n$&$K$&$n$\\ \hline
$12_{1}$&$5$
&$12_{2}$&$4$
&$12_{3}$&$5$
&$12_{4}$&$6$
&$12_{5}$&$6$
&$12_{6}$&$5$
&$12_{7}$&$6$
&$12_{8}$&$5$\\ \hline
$12_{9}$&$4$
&$12_{10}$&$6$
&$12_{11}$&$5$
&$12_{12}$&$5$
&$12_{13}$&$5$
&$12_{14}$&$6$
&$12_{15}$&$5$
&$12_{16}$&$5$\\ \hline
$12_{17}$&$5$
&$12_{18}$&$4$
&$12_{19}$&$5$
&$12_{20}$&$5$
&$12_{21}$&$6$
&$12_{22}$&$4$
&$12_{23}$&$5$
&$12_{24}$&$4$\\ \hline
$12_{25}$&$6$
&$12_{26}$&$5$
&$12_{27}$&$5$
&$12_{28}$&$6$
&$12_{29}$&$6$
&$12_{30}$&$5$
&$12_{31}$&$4$
&$12_{32}$&$5$\\ \hline
$12_{33}$&$5$
&$12_{34}$&$4$
&$12_{35}$&$5$
&$12_{36}$&$4$
&$12_{37}$&$5$
&$12_{38}$&$4$
&$12_{39}$&$5$
&$12_{40}$&$6$\\ \hline
$12_{41}$&$5$
&$12_{42}$&$5$
&$12_{43}$&$6$
&$12_{44}$&$6$
&$12_{45}$&$5$
&$12_{46}$&$5$
&$12_{47}$&$6$
&$12_{48}$&$6$\\ \hline
$12_{49}$&$5$
&$12_{50}$&$5$
&$12_{51}$&$5$
&$12_{52}$&$4$
&$12_{53}$&$5$
&$12_{54}$&$5$
&$12_{55}$&$5$
&$12_{56}$&$4$\\ \hline
$12_{57}$&$5$
&$12_{58}$&$6$
&$12_{59}$&$6$
&$12_{60}$&$6$
&$12_{61}$&$5$
&$12_{62}$&$5$
&$12_{63}$&$6$
&$12_{64}$&$6$\\ \hline
$12_{65}$&$5$
&$12_{66}$&$5$
&$12_{67}$&$5$
&$12_{68}$&$5$
&$12_{69}$&$6$
&$12_{70}$&$5$
&$12_{71}$&$5$
&$12_{72}$&$5$\\ \hline
$12_{73}$&$6$
&$12_{74}$&$6$
&$12_{75}$&$5$
&$12_{76}$&$4$
&$12_{77}$&$6$
&$12_{78}$&$5$
&$12_{79}$&$5$
&$12_{80}$&$5$\\ \hline
$12_{81}$&$5$
&$12_{82}$&$6$
&$12_{83}$&$6$
&$12_{84}$&$5$
&$12_{85}$&$5$
&$12_{86}$&$5$
&$12_{87}$&$5$
&$12_{88}$&$6$\\ \hline
$12_{89}$&$5$
&$12_{90}$&$6$
&$12_{91}$&$5$
&$12_{92}$&$5$
&$12_{93}$&$4$
&$12_{94}$&$5$
&$12_{95}$&$5$
&$12_{96}$&$4$\\ \hline
$12_{97}$&$4$
&$12_{98}$&$5$
&$12_{99}$&$6$
&$12_{100}$&$5$
&$12_{101}$&$5$
&$12_{102}$&$6$
&$12_{103}$&$6$
&$12_{104}$&$5$\\ \hline
\end{tabular}}
\centerline{\begin{tabular}{||c|c||c|c||c|c||c|c||c|c||c|c||c|c||c|c||c|c||c|c||c|c||c|c||c|c||c|c||c|c||c|c||c|c||c|c||}\hline
$K$&$n$&$K$&$n$&$K$&$n$&$K$&$n$&$K$&$n$&$K$&$n$&$K$&$n$&$K$&$n$\\ \hline
$12_{105}$&$4$
&$12_{106}$&$5$
&$12_{107}$&$6$
&$12_{108}$&$6$
&$12_{109}$&$5$
&$12_{110}$&$5$
&$12_{111}$&$5$
&$12_{112}$&$5$\\ \hline
$12_{113}$&$6$
&$12_{114}$&$6$
&$12_{115}$&$5$
&$12_{116}$&$5$
&$12_{117}$&$6$
&$12_{118}$&$5$
&$12_{119}$&$5$
&$12_{120}$&$6$\\ \hline
$12_{121}$&$5$
&$12_{122}$&$5$
&$12_{123}$&$4$
&$12_{124}$&$5$
&$12_{125}$&$6$
&$12_{126}$&$6$
&$12_{127}$&$5$
&$12_{128}$&$4$\\ \hline
$12_{129}$&$5$
&$12_{130}$&$5$
&$12_{131}$&$5$
&$12_{132}$&$6$
&$12_{133}$&$5$
&$12_{134}$&$5$
&$12_{135}$&$5$
&$12_{136}$&$5$\\ \hline
$12_{137}$&$5$
&$12_{138}$&$5$
&$12_{139}$&$6$
&$12_{140}$&$5$
&$12_{141}$&$5$
&$12_{142}$&$5$
&$12_{143}$&$4$
&$12_{144}$&$5$\\ \hline
$12_{145}$&$5$
&$12_{146}$&$3$
&$12_{147}$&$4$
&$12_{148}$&$5$
&$12_{149}$&$6$
&$12_{150}$&$5$
&$12_{151}$&$5$
&$12_{152}$&$4$\\ \hline
$12_{153}$&$4$
&$12_{154}$&$6$
&$12_{155}$&$5$
&$12_{156}$&$4$
&$12_{157}$&$5$
&$12_{158}$&$4$
&$12_{159}$&$5$
&$12_{160}$&$4$\\ \hline
$12_{161}$&$5$
&$12_{162}$&$6$
&$12_{163}$&$5$
&$12_{164}$&$5$
&$12_{165}$&$4$
&$12_{166}$&$5$
&$12_{167}$&$5$
&$12_{168}$&$4$\\ \hline
$12_{169}$&$3$
&$12_{170}$&$5$
&$12_{171}$&$5$
&$12_{172}$&$4$
&$12_{173}$&$5$
&$12_{174}$&$5$
&$12_{175}$&$6$
&$12_{176}$&$4$\\ \hline
$12_{177}$&$5$
&$12_{178}$&$4$
&$12_{179}$&$5$
&$12_{180}$&$5$
&$12_{181}$&$6$
&$12_{182}$&$5$
&$12_{183}$&$4$
&$12_{184}$&$6$\\ \hline
$12_{185}$&$5$
&$12_{186}$&$5$
&$12_{187}$&$5$
&$12_{188}$&$5$
&$12_{189}$&$5$
&$12_{190}$&$5$
&$12_{191}$&$6$
&$12_{192}$&$5$\\ \hline
$12_{193}$&$4$
&$12_{194}$&$5$
&$12_{195}$&$4$
&$12_{196}$&$5$
&$12_{197}$&$4$
&$12_{198}$&$6$
&$12_{199}$&$6$
&$12_{200}$&$5$\\ \hline
$12_{201}$&$4$
&$12_{202}$&$5$
&$12_{203}$&$5$
&$12_{204}$&$5$
&$12_{205}$&$4$
&$12_{206}$&$4$
&$12_{207}$&$4$
&$12_{208}$&$5$\\ \hline
$12_{209}$&$6$
&$12_{210}$&$6$
&$12_{211}$&$5$
&$12_{212}$&$5$
&$12_{213}$&$5$
&$12_{214}$&$6$
&$12_{215}$&$5$
&$12_{216}$&$4$\\ \hline
$12_{217}$&$5$
&$12_{218}$&$5$
&$12_{219}$&$5$
&$12_{220}$&$5$
&$12_{221}$&$5$
&$12_{222}$&$6$
&$12_{223}$&$4$
&$12_{224}$&$5$\\ \hline
$12_{225}$&$6$
&$12_{226}$&$6$
&$12_{227}$&$6$
&$12_{228}$&$5$
&$12_{229}$&$6$
&$12_{230}$&$5$
&$12_{231}$&$5$
&$12_{232}$&$6$\\ \hline
$12_{233}$&$6$
&$12_{234}$&$5$
&$12_{235}$&$5$
&$12_{236}$&$4$
&$12_{237}$&$5$
&$12_{238}$&$5$
&$12_{239}$&$4$
&$12_{240}$&$5$\\ \hline
$12_{241}$&$5$
&$12_{242}$&$5$
&$12_{243}$&$5$
&$12_{244}$&$5$
&$12_{245}$&$5$
&$12_{246}$&$4$
&$12_{247}$&$5$
&$12_{248}$&$4$\\ \hline
$12_{249}$&$5$
&$12_{250}$&$4$
&$12_{251}$&$5$
&$12_{252}$&$4$
&$12_{253}$&$5$
&$12_{254}$&$4$
&$12_{255}$&$4$
&$12_{256}$&$5$\\ \hline
$12_{257}$&$6$
&$12_{258}$&$5$
&$12_{259}$&$4$
&$12_{260}$&$4$
&$12_{261}$&$5$
&$12_{262}$&$4$
&$12_{263}$&$6$
&$12_{264}$&$5$\\ \hline
$12_{265}$&$6$
&$12_{266}$&$5$
&$12_{267}$&$5$
&$12_{268}$&$6$
&$12_{269}$&$5$
&$12_{270}$&$4$
&$12_{271}$&$5$
&$12_{272}$&$5$\\ \hline
$12_{273}$&$6$
&$12_{274}$&$5$
&$12_{275}$&$5$
&$12_{276}$&$4$
&$12_{277}$&$5$
&$12_{278}$&$5$
&$12_{279}$&$5$
&$12_{280}$&$5$\\ \hline
$12_{281}$&$5$
&$12_{282}$&$6$
&$12_{283}$&$5$
&$12_{284}$&$5$
&$12_{285}$&$5$
&$12_{286}$&$5$
&$12_{287}$&$6$
&$12_{288}$&$6$\\ \hline
$12_{289}$&$5$
&$12_{290}$&$5$
&$12_{291}$&$4$
&$12_{292}$&$5$
&$12_{293}$&$6$
&$12_{294}$&$5$
&$12_{295}$&$6$
&$12_{296}$&$6$\\ \hline
$12_{297}$&$5$
&$12_{298}$&$5$
&$12_{299}$&$4$
&$12_{300}$&$5$
&$12_{301}$&$5$
&$12_{302}$&$5$
&$12_{303}$&$5$
&$12_{304}$&$4$\\ \hline
$12_{305}$&$5$
&$12_{306}$&$5$
&$12_{307}$&$5$
&$12_{308}$&$5$
&$12_{309}$&$5$
&$12_{310}$&$6$
&$12_{311}$&$5$
&$12_{312}$&$4$\\ \hline
$12_{313}$&$5$
&$12_{314}$&$6$
&$12_{315}$&$6$
&$12_{316}$&$6$
&$12_{317}$&$5$
&$12_{318}$&$5$
&$12_{319}$&$5$
&$12_{320}$&$4$\\ \hline
$12_{321}$&$4$
&$12_{322}$&$5$
&$12_{323}$&$6$
&$12_{324}$&$5$
&$12_{325}$&$5$
&$12_{326}$&$5$
&$12_{327}$&$5$
&$12_{328}$&$5$\\ \hline
$12_{329}$&$5$
&$12_{330}$&$4$
&$12_{331}$&$5$
&$12_{332}$&$5$
&$12_{333}$&$6$
&$12_{334}$&$5$
&$12_{335}$&$5$
&$12_{336}$&$6$\\ \hline
$12_{337}$&$6$
&$12_{338}$&$6$
&$12_{339}$&$4$
&$12_{340}$&$6$
&$12_{341}$&$6$
&$12_{342}$&$6$
&$12_{343}$&$5$
&$12_{344}$&$5$\\ \hline
$12_{345}$&$4$
&$12_{346}$&$5$
&$12_{347}$&$5$
&$12_{348}$&$6$
&$12_{349}$&$5$
&$12_{350}$&$6$
&$12_{351}$&$5$
&$12_{352}$&$6$\\ \hline
$12_{353}$&$5$
&$12_{354}$&$5$
&$12_{355}$&$4$
&$12_{356}$&$4$
&$12_{357}$&$5$
&$12_{358}$&$5$
&$12_{359}$&$6$
&$12_{360}$&$5$\\ \hline
$12_{361}$&$6$
&$12_{362}$&$5$
&$12_{363}$&$5$
&$12_{364}$&$6$
&$12_{365}$&$4$
&$12_{366}$&$5$
&$12_{367}$&$4$
&$12_{368}$&$5$\\ \hline
$12_{369}$&$3$
&$12_{370}$&$4$
&$12_{371}$&$4$
&$12_{372}$&$5$
&$12_{373}$&$4$
&$12_{374}$&$5$
&$12_{375}$&$4$
&$12_{376}$&$4$\\ \hline
$12_{377}$&$5$
&$12_{378}$&$4$
&$12_{379}$&$3$
&$12_{380}$&$3$
&$12_{381}$&$5$
&$12_{382}$&$4$
&$12_{383}$&$5$
&$12_{384}$&$5$\\ \hline
$12_{385}$&$5$
&$12_{386}$&$5$
&$12_{387}$&$5$
&$12_{388}$&$6$
&$12_{389}$&$6$
&$12_{390}$&$6$
&$12_{391}$&$5$
&$12_{392}$&$4$\\ \hline
$12_{393}$&$6$
&$12_{394}$&$5$
&$12_{395}$&$5$
&$12_{396}$&$5$
&$12_{397}$&$5$
&$12_{398}$&$4$
&$12_{399}$&$5$
&$12_{400}$&$5$\\ \hline
\end{tabular}}
\centerline{\begin{tabular}{||c|c||c|c||c|c||c|c||c|c||c|c||c|c||c|c||c|c||c|c||c|c||c|c||c|c||c|c||c|c||c|c||c|c||c|c||}\hline
$K$&$n$&$K$&$n$&$K$&$n$&$K$&$n$&$K$&$n$&$K$&$n$&$K$&$n$&$K$&$n$\\ \hline
$12_{401}$&$5$
&$12_{402}$&$5$
&$12_{403}$&$5$
&$12_{404}$&$4$
&$12_{405}$&$5$
&$12_{406}$&$6$
&$12_{407}$&$5$
&$12_{408}$&$6$\\ \hline
$12_{409}$&$4$
&$12_{410}$&$5$
&$12_{411}$&$5$
&$12_{412}$&$5$
&$12_{413}$&$5$
&$12_{414}$&$4$
&$12_{415}$&$6$
&$12_{416}$&$5$\\ \hline
$12_{417}$&$6$
&$12_{418}$&$5$
&$12_{419}$&$5$
&$12_{420}$&$4$
&$12_{421}$&$4$
&$12_{422}$&$3$
&$12_{423}$&$4$
&$12_{424}$&$5$\\ \hline
$12_{425}$&$4$
&$12_{426}$&$6$
&$12_{427}$&$6$
&$12_{428}$&$5$
&$12_{429}$&$5$
&$12_{430}$&$5$
&$12_{431}$&$6$
&$12_{432}$&$6$\\ \hline
$12_{433}$&$6$
&$12_{434}$&$5$
&$12_{435}$&$6$
&$12_{436}$&$4$
&$12_{437}$&$5$
&$12_{438}$&$5$
&$12_{439}$&$6$
&$12_{440}$&$5$\\ \hline
$12_{441}$&$5$
&$12_{442}$&$4$
&$12_{443}$&$4$
&$12_{444}$&$4$
&$12_{445}$&$5$
&$12_{446}$&$5$
&$12_{447}$&$4$
&$12_{448}$&$4$\\ \hline
$12_{449}$&$5$
&$12_{450}$&$5$
&$12_{451}$&$5$
&$12_{452}$&$6$
&$12_{453}$&$5$
&$12_{454}$&$4$
&$12_{455}$&$5$
&$12_{456}$&$6$\\ \hline
$12_{457}$&$5$
&$12_{458}$&$6$
&$12_{459}$&$5$
&$12_{460}$&$6$
&$12_{461}$&$6$
&$12_{462}$&$5$
&$12_{463}$&$4$
&$12_{464}$&$5$\\ \hline
$12_{465}$&$6$
&$12_{466}$&$5$
&$12_{467}$&$6$
&$12_{468}$&$5$
&$12_{469}$&$5$
&$12_{470}$&$6$
&$12_{471}$&$4$
&$12_{472}$&$6$\\ \hline
$12_{473}$&$5$
&$12_{474}$&$6$
&$12_{475}$&$5$
&$12_{476}$&$4$
&$12_{477}$&$6$
&$12_{478}$&$5$
&$12_{479}$&$5$
&$12_{480}$&$6$\\ \hline
$12_{481}$&$4$
&$12_{482}$&$4$
&$12_{483}$&$6$
&$12_{484}$&$6$
&$12_{485}$&$5$
&$12_{486}$&$6$
&$12_{487}$&$6$
&$12_{488}$&$4$\\ \hline
$12_{489}$&$5$
&$12_{490}$&$5$
&$12_{491}$&$5$
&$12_{492}$&$5$
&$12_{493}$&$4$
&$12_{494}$&$5$
&$12_{495}$&$5$
&$12_{496}$&$6$\\ \hline
$12_{497}$&$6$
&$12_{498}$&$5$
&$12_{499}$&$6$
&$12_{500}$&$5$
&$12_{501}$&$5$
&$12_{502}$&$4$
&$12_{503}$&$4$
&$12_{504}$&$5$\\ \hline
$12_{505}$&$5$
&$12_{506}$&$5$
&$12_{507}$&$4$
&$12_{508}$&$5$
&$12_{509}$&$6$
&$12_{510}$&$6$
&$12_{511}$&$5$
&$12_{512}$&$5$\\ \hline
$12_{513}$&$5$
&$12_{514}$&$5$
&$12_{515}$&$5$
&$12_{516}$&$6$
&$12_{517}$&$4$
&$12_{518}$&$5$
&$12_{519}$&$4$
&$12_{520}$&$4$\\ \hline
$12_{521}$&$4$
&$12_{522}$&$5$
&$12_{523}$&$5$
&$12_{524}$&$5$
&$12_{525}$&$4$
&$12_{526}$&$6$
&$12_{527}$&$5$
&$12_{528}$&$5$\\ \hline
$12_{529}$&$5$
&$12_{530}$&$5$
&$12_{531}$&$5$
&$12_{532}$&$4$
&$12_{533}$&$5$
&$12_{534}$&$5$
&$12_{535}$&$5$
&$12_{536}$&$4$\\ \hline
$12_{537}$&$5$
&$12_{538}$&$4$
&$12_{539}$&$5$
&$12_{540}$&$5$
&$12_{541}$&$4$
&$12_{542}$&$4$
&$12_{543}$&$6$
&$12_{544}$&$5$\\ \hline
$12_{545}$&$5$
&$12_{546}$&$6$
&$12_{547}$&$6$
&$12_{548}$&$4$
&$12_{549}$&$4$
&$12_{550}$&$5$
&$12_{551}$&$4$
&$12_{552}$&$4$\\ \hline
$12_{553}$&$5$
&$12_{554}$&$5$
&$12_{555}$&$5$
&$12_{556}$&$5$
&$12_{557}$&$4$
&$12_{558}$&$5$
&$12_{559}$&$5$
&$12_{560}$&$5$\\ \hline
$12_{561}$&$5$
&$12_{562}$&$5$
&$12_{563}$&$4$
&$12_{564}$&$4$
&$12_{565}$&$5$
&$12_{566}$&$5$
&$12_{567}$&$5$
&$12_{568}$&$5$\\ \hline
$12_{569}$&$5$
&$12_{570}$&$5$
&$12_{571}$&$6$
&$12_{572}$&$5$
&$12_{573}$&$4$
&$12_{574}$&$4$
&$12_{575}$&$5$
&$12_{576}$&$3$\\ \hline
$12_{577}$&$4$
&$12_{578}$&$5$
&$12_{579}$&$5$
&$12_{580}$&$3$
&$12_{581}$&$4$
&$12_{582}$&$4$
&$12_{583}$&$5$
&$12_{584}$&$5$\\ \hline
$12_{585}$&$5$
&$12_{586}$&$5$
&$12_{587}$&$4$
&$12_{588}$&$6$
&$12_{589}$&$5$
&$12_{590}$&$4$
&$12_{591}$&$4$
&$12_{592}$&$6$\\ \hline
$12_{593}$&$5$
&$12_{594}$&$4$
&$12_{595}$&$4$
&$12_{596}$&$3$
&$12_{597}$&$4$
&$12_{598}$&$5$
&$12_{599}$&$5$
&$12_{600}$&$4$\\ \hline
$12_{601}$&$4$
&$12_{602}$&$5$
&$12_{603}$&$5$
&$12_{604}$&$6$
&$12_{605}$&$4$
&$12_{606}$&$5$
&$12_{607}$&$5$
&$12_{608}$&$5$\\ \hline
$12_{609}$&$5$
&$12_{610}$&$4$
&$12_{611}$&$6$
&$12_{612}$&$4$
&$12_{613}$&$5$
&$12_{614}$&$6$
&$12_{615}$&$6$
&$12_{616}$&$5$\\ \hline
$12_{617}$&$5$
&$12_{618}$&$5$
&$12_{619}$&$4$
&$12_{620}$&$5$
&$12_{621}$&$5$
&$12_{622}$&$5$
&$12_{623}$&$5$
&$12_{624}$&$5$\\ \hline
$12_{625}$&$5$
&$12_{626}$&$6$
&$12_{627}$&$6$
&$12_{628}$&$5$
&$12_{629}$&$6$
&$12_{630}$&$5$
&$12_{631}$&$6$
&$12_{632}$&$4$\\ \hline
$12_{633}$&$5$
&$12_{634}$&$4$
&$12_{635}$&$5$
&$12_{636}$&$3$
&$12_{637}$&$5$
&$12_{638}$&$5$
&$12_{639}$&$5$
&$12_{640}$&$4$\\ \hline
$12_{641}$&$3$
&$12_{642}$&$4$
&$12_{643}$&$4$
&$12_{644}$&$4$
&$12_{645}$&$6$
&$12_{646}$&$5$
&$12_{647}$&$4$
&$12_{648}$&$5$\\ \hline
$12_{649}$&$4$
&$12_{650}$&$5$
&$12_{651}$&$4$
&$12_{652}$&$5$
&$12_{653}$&$4$
&$12_{654}$&$5$
&$12_{655}$&$5$
&$12_{656}$&$5$\\ \hline
$12_{657}$&$5$
&$12_{658}$&$5$
&$12_{659}$&$6$
&$12_{660}$&$4$
&$12_{661}$&$5$
&$12_{662}$&$6$
&$12_{663}$&$4$
&$12_{664}$&$4$\\ \hline
$12_{665}$&$5$
&$12_{666}$&$6$
&$12_{667}$&$4$
&$12_{668}$&$5$
&$12_{669}$&$3$
&$12_{670}$&$5$
&$12_{671}$&$5$
&$12_{672}$&$6$\\ \hline
$12_{673}$&$5$
&$12_{674}$&$6$
&$12_{675}$&$5$
&$12_{676}$&$5$
&$12_{677}$&$5$
&$12_{678}$&$5$
&$12_{679}$&$4$
&$12_{680}$&$5$\\ \hline
$12_{681}$&$4$
&$12_{682}$&$4$
&$12_{683}$&$4$
&$12_{684}$&$5$
&$12_{685}$&$6$
&$12_{686}$&$6$
&$12_{687}$&$6$
&$12_{688}$&$5$\\ \hline
$12_{689}$&$4$
&$12_{690}$&$4$
&$12_{691}$&$4$
&$12_{692}$&$5$
&$12_{693}$&$4$
&$12_{694}$&$4$
&$12_{695}$&$6$
&$12_{696}$&$5$\\ \hline
\end{tabular}}
\centerline{\begin{tabular}{||c|c||c|c||c|c||c|c||c|c||c|c||c|c||c|c||c|c||c|c||c|c||c|c||c|c||c|c||c|c||c|c||c|c||c|c||}\hline
$K$&$n$&$K$&$n$&$K$&$n$&$K$&$n$&$K$&$n$&$K$&$n$&$K$&$n$&$K$&$n$\\ \hline
$12_{697}$&$6$
&$12_{698}$&$5$
&$12_{699}$&$5$
&$12_{700}$&$5$
&$12_{701}$&$5$
&$12_{702}$&$4$
&$12_{703}$&$6$
&$12_{704}$&$5$\\ \hline
$12_{705}$&$6$
&$12_{706}$&$5$
&$12_{707}$&$5$
&$12_{708}$&$4$
&$12_{709}$&$5$
&$12_{710}$&$6$
&$12_{711}$&$5$
&$12_{712}$&$6$\\ \hline
$12_{713}$&$5$
&$12_{714}$&$4$
&$12_{715}$&$5$
&$12_{716}$&$3$
&$12_{717}$&$4$
&$12_{718}$&$5$
&$12_{719}$&$5$
&$12_{720}$&$4$\\ \hline
$12_{721}$&$5$
&$12_{722}$&$2$
&$12_{723}$&$3$
&$12_{724}$&$4$
&$12_{725}$&$4$
&$12_{726}$&$4$
&$12_{727}$&$5$
&$12_{728}$&$5$\\ \hline
$12_{729}$&$5$
&$12_{730}$&$5$
&$12_{731}$&$4$
&$12_{732}$&$4$
&$12_{733}$&$3$
&$12_{734}$&$6$
&$12_{735}$&$4$
&$12_{736}$&$5$\\ \hline
$12_{737}$&$5$
&$12_{738}$&$4$
&$12_{739}$&$4$
&$12_{740}$&$4$
&$12_{741}$&$6$
&$12_{742}$&$4$
&$12_{743}$&$4$
&$12_{744}$&$3$\\ \hline
$12_{745}$&$3$
&$12_{746}$&$5$
&$12_{747}$&$5$
&$12_{748}$&$4$
&$12_{749}$&$4$
&$12_{750}$&$4$
&$12_{751}$&$5$
&$12_{752}$&$4$\\ \hline
$12_{753}$&$3$
&$12_{754}$&$5$
&$12_{755}$&$6$
&$12_{756}$&$5$
&$12_{757}$&$5$
&$12_{758}$&$4$
&$12_{759}$&$3$
&$12_{760}$&$4$\\ \hline
$12_{761}$&$5$
&$12_{762}$&$3$
&$12_{763}$&$4$
&$12_{764}$&$5$
&$12_{765}$&$6$
&$12_{766}$&$5$
&$12_{767}$&$4$
&$12_{768}$&$5$\\ \hline
$12_{769}$&$5$
&$12_{770}$&$6$
&$12_{771}$&$5$
&$12_{772}$&$4$
&$12_{773}$&$4$
&$12_{774}$&$4$
&$12_{775}$&$4$
&$12_{776}$&$5$\\ \hline
$12_{777}$&$4$
&$12_{778}$&$6$
&$12_{779}$&$5$
&$12_{780}$&$6$
&$12_{781}$&$5$
&$12_{782}$&$4$
&$12_{783}$&$5$
&$12_{784}$&$5$\\ \hline
$12_{785}$&$5$
&$12_{786}$&$5$
&$12_{787}$&$4$
&$12_{788}$&$6$
&$12_{789}$&$4$
&$12_{790}$&$5$
&$12_{791}$&$3$
&$12_{792}$&$4$\\ \hline
$12_{793}$&$6$
&$12_{794}$&$4$
&$12_{795}$&$5$
&$12_{796}$&$3$
&$12_{797}$&$4$
&$12_{798}$&$6$
&$12_{799}$&$5$
&$12_{800}$&$4$\\ \hline
$12_{801}$&$4$
&$12_{802}$&$3$
&$12_{803}$&$2$
&$12_{804}$&$5$
&$12_{805}$&$4$
&$12_{806}$&$5$
&$12_{807}$&$5$
&$12_{808}$&$4$\\ \hline
$12_{809}$&$5$
&$12_{810}$&$5$
&$12_{811}$&$4$
&$12_{812}$&$5$
&$12_{813}$&$4$
&$12_{814}$&$5$
&$12_{815}$&$4$
&$12_{816}$&$5$\\ \hline
$12_{817}$&$4$
&$12_{818}$&$4$
&$12_{819}$&$5$
&$12_{820}$&$4$
&$12_{821}$&$5$
&$12_{822}$&$4$
&$12_{823}$&$4$
&$12_{824}$&$4$\\ \hline
$12_{825}$&$4$
&$12_{826}$&$3$
&$12_{827}$&$3$
&$12_{828}$&$5$
&$12_{829}$&$4$
&$12_{830}$&$5$
&$12_{831}$&$5$
&$12_{832}$&$4$\\ \hline
$12_{833}$&$4$
&$12_{834}$&$4$
&$12_{835}$&$3$
&$12_{836}$&$4$
&$12_{837}$&$4$
&$12_{838}$&$2$
&$12_{839}$&$3$
&$12_{840}$&$4$\\ \hline
$12_{841}$&$4$
&$12_{842}$&$3$
&$12_{843}$&$3$
&$12_{844}$&$5$
&$12_{845}$&$3$
&$12_{846}$&$5$
&$12_{847}$&$4$
&$12_{848}$&$5$\\ \hline
$12_{849}$&$5$
&$12_{850}$&$4$
&$12_{851}$&$5$
&$12_{852}$&$5$
&$12_{853}$&$4$
&$12_{854}$&$4$
&$12_{855}$&$4$
&$12_{856}$&$5$\\ \hline
$12_{857}$&$5$
&$12_{858}$&$4$
&$12_{859}$&$3$
&$12_{860}$&$4$
&$12_{861}$&$5$
&$12_{862}$&$5$
&$12_{863}$&$4$
&$12_{864}$&$5$\\ \hline
$12_{865}$&$5$
&$12_{866}$&$6$
&$12_{867}$&$6$
&$12_{868}$&$6$
&$12_{869}$&$4$
&$12_{870}$&$5$
&$12_{871}$&$5$
&$12_{872}$&$4$\\ \hline
$12_{873}$&$4$
&$12_{874}$&$6$
&$12_{875}$&$5$
&$12_{876}$&$4$
&$12_{877}$&$4$
&$12_{878}$&$3$
&$12_{879}$&$4$
&$12_{880}$&$5$\\ \hline
$12_{881}$&$3$
&$12_{882}$&$4$
&$12_{883}$&$4$
&$12_{884}$&$5$
&$12_{885}$&$5$
&$12_{886}$&$5$
&$12_{887}$&$6$
&$12_{888}$&$5$\\ \hline
$12_{889}$&$4$
&$12_{890}$&$5$
&$12_{891}$&$5$
&$12_{892}$&$5$
&$12_{893}$&$6$
&$12_{894}$&$5$
&$12_{895}$&$6$
&$12_{896}$&$4$\\ \hline
$12_{897}$&$5$
&$12_{898}$&$5$
&$12_{899}$&$5$
&$12_{900}$&$6$
&$12_{901}$&$5$
&$12_{902}$&$5$
&$12_{903}$&$6$
&$12_{904}$&$5$\\ \hline
$12_{905}$&$4$
&$12_{906}$&$6$
&$12_{907}$&$5$
&$12_{908}$&$5$
&$12_{909}$&$4$
&$12_{910}$&$5$
&$12_{911}$&$5$
&$12_{912}$&$4$\\ \hline
$12_{913}$&$4$
&$12_{914}$&$5$
&$12_{915}$&$6$
&$12_{916}$&$5$
&$12_{917}$&$5$
&$12_{918}$&$5$
&$12_{919}$&$5$
&$12_{920}$&$4$\\ \hline
$12_{921}$&$5$
&$12_{922}$&$6$
&$12_{923}$&$4$
&$12_{924}$&$5$
&$12_{925}$&$5$
&$12_{926}$&$4$
&$12_{927}$&$4$
&$12_{928}$&$5$\\ \hline
$12_{929}$&$4$
&$12_{930}$&$4$
&$12_{931}$&$5$
&$12_{932}$&$4$
&$12_{933}$&$5$
&$12_{934}$&$6$
&$12_{935}$&$5$
&$12_{936}$&$5$\\ \hline
$12_{937}$&$3$
&$12_{938}$&$4$
&$12_{939}$&$5$
&$12_{940}$&$4$
&$12_{941}$&$4$
&$12_{942}$&$4$
&$12_{943}$&$5$
&$12_{944}$&$5$\\ \hline
$12_{945}$&$5$
&$12_{946}$&$4$
&$12_{947}$&$4$
&$12_{948}$&$5$
&$12_{949}$&$5$
&$12_{950}$&$4$
&$12_{951}$&$5$
&$12_{952}$&$4$\\ \hline
$12_{953}$&$5$
&$12_{954}$&$5$
&$12_{955}$&$4$
&$12_{956}$&$5$
&$12_{957}$&$5$
&$12_{958}$&$5$
&$12_{959}$&$5$
&$12_{960}$&$6$\\ \hline
$12_{961}$&$6$
&$12_{962}$&$5$
&$12_{963}$&$4$
&$12_{964}$&$5$
&$12_{965}$&$6$
&$12_{966}$&$5$
&$12_{967}$&$5$
&$12_{968}$&$5$\\ \hline
$12_{969}$&$4$
&$12_{970}$&$3$
&$12_{971}$&$4$
&$12_{972}$&$4$
&$12_{973}$&$5$
&$12_{974}$&$5$
&$12_{975}$&$4$
&$12_{976}$&$5$\\ \hline
$12_{977}$&$4$
&$12_{978}$&$4$
&$12_{979}$&$5$
&$12_{980}$&$5$
&$12_{981}$&$4$
&$12_{982}$&$5$
&$12_{983}$&$5$
&$12_{984}$&$3$\\ \hline
$12_{985}$&$4$
&$12_{986}$&$5$
&$12_{987}$&$5$
&$12_{988}$&$4$
&$12_{989}$&$5$
&$12_{990}$&$5$
&$12_{991}$&$4$
&$12_{992}$&$6$\\ \hline
\end{tabular}}
\centerline{\begin{tabular}{||c|c||c|c||c|c||c|c||c|c||c|c||c|c||c|c||c|c||c|c||c|c||c|c||c|c||c|c||c|c||c|c||c|c||c|c||}\hline
$K$&$n$&$K$&$n$&$K$&$n$&$K$&$n$&$K$&$n$&$K$&$n$&$K$&$n$&$K$&$n$\\ \hline
$12_{993}$&$5$
&$12_{994}$&$6$
&$12_{995}$&$5$
&$12_{996}$&$5$
&$12_{997}$&$5$
&$12_{998}$&$6$
&$12_{999}$&$5$
&$12_{1000}$&$4$\\ \hline
$12_{1001}$&$4$
&$12_{1002}$&$5$
&$12_{1003}$&$5$
&$12_{1004}$&$6$
&$12_{1005}$&$5$
&$12_{1006}$&$5$
&$12_{1007}$&$4$
&$12_{1008}$&$5$\\ \hline
$12_{1009}$&$4$
&$12_{1010}$&$5$
&$12_{1011}$&$4$
&$12_{1012}$&$4$
&$12_{1013}$&$4$
&$12_{1014}$&$5$
&$12_{1015}$&$4$
&$12_{1016}$&$5$\\ \hline
$12_{1017}$&$3$
&$12_{1018}$&$4$
&$12_{1019}$&$6$
&$12_{1020}$&$5$
&$12_{1021}$&$6$
&$12_{1022}$&$5$
&$12_{1023}$&$4$
&$12_{1024}$&$4$\\ \hline
$12_{1025}$&$5$
&$12_{1026}$&$4$
&$12_{1027}$&$3$
&$12_{1028}$&$4$
&$12_{1029}$&$3$
&$12_{1030}$&$3$
&$12_{1031}$&$3$
&$12_{1032}$&$4$\\ \hline
$12_{1033}$&$4$
&$12_{1034}$&$4$
&$12_{1035}$&$4$
&$12_{1036}$&$4$
&$12_{1037}$&$5$
&$12_{1038}$&$5$
&$12_{1039}$&$4$
&$12_{1040}$&$4$\\ \hline
$12_{1041}$&$5$
&$12_{1042}$&$5$
&$12_{1043}$&$5$
&$12_{1044}$&$5$
&$12_{1045}$&$4$
&$12_{1046}$&$5$
&$12_{1047}$&$5$
&$12_{1048}$&$5$\\ \hline
$12_{1049}$&$5$
&$12_{1050}$&$5$
&$12_{1051}$&$4$
&$12_{1052}$&$5$
&$12_{1053}$&$5$
&$12_{1054}$&$5$
&$12_{1055}$&$5$
&$12_{1056}$&$6$\\ \hline
$12_{1057}$&$5$
&$12_{1058}$&$5$
&$12_{1059}$&$4$
&$12_{1060}$&$5$
&$12_{1061}$&$6$
&$12_{1062}$&$4$
&$12_{1063}$&$4$
&$12_{1064}$&$5$\\ \hline
$12_{1065}$&$5$
&$12_{1066}$&$5$
&$12_{1067}$&$6$
&$12_{1068}$&$4$
&$12_{1069}$&$6$
&$12_{1070}$&$5$
&$12_{1071}$&$5$
&$12_{1072}$&$5$\\ \hline
$12_{1073}$&$5$
&$12_{1074}$&$4$
&$12_{1075}$&$4$
&$12_{1076}$&$6$
&$12_{1077}$&$5$
&$12_{1078}$&$5$
&$12_{1079}$&$6$
&$12_{1080}$&$4$\\ \hline
$12_{1081}$&$5$
&$12_{1082}$&$4$
&$12_{1083}$&$4$
&$12_{1084}$&$4$
&$12_{1085}$&$5$
&$12_{1086}$&$5$
&$12_{1087}$&$5$
&$12_{1088}$&$6$\\ \hline
$12_{1089}$&$4$
&$12_{1090}$&$5$
&$12_{1091}$&$5$
&$12_{1092}$&$5$
&$12_{1093}$&$5$
&$12_{1094}$&$4$
&$12_{1095}$&$3$
&$12_{1096}$&$5$\\ \hline
$12_{1097}$&$5$
&$12_{1098}$&$6$
&$12_{1099}$&$5$
&$12_{1100}$&$5$
&$12_{1101}$&$5$
&$12_{1102}$&$6$
&$12_{1103}$&$5$
&$12_{1104}$&$5$\\ \hline
$12_{1105}$&$6$
&$12_{1106}$&$4$
&$12_{1107}$&$3$
&$12_{1108}$&$4$
&$12_{1109}$&$5$
&$12_{1110}$&$5$
&$12_{1111}$&$4$
&$12_{1112}$&$5$\\ \hline
$12_{1113}$&$5$
&$12_{1114}$&$3$
&$12_{1115}$&$4$
&$12_{1116}$&$5$
&$12_{1117}$&$6$
&$12_{1118}$&$4$
&$12_{1119}$&$5$
&$12_{1120}$&$4$\\ \hline
$12_{1121}$&$5$
&$12_{1122}$&$5$
&$12_{1123}$&$6$
&$12_{1124}$&$6$
&$12_{1125}$&$4$
&$12_{1126}$&$4$
&$12_{1127}$&$4$
&$12_{1128}$&$3$\\ \hline
$12_{1129}$&$4$
&$12_{1130}$&$4$
&$12_{1131}$&$3$
&$12_{1132}$&$4$
&$12_{1133}$&$5$
&$12_{1134}$&$3$
&$12_{1135}$&$4$
&$12_{1136}$&$5$\\ \hline
$12_{1137}$&$4$
&$12_{1138}$&$3$
&$12_{1139}$&$4$
&$12_{1140}$&$4$
&$12_{1141}$&$5$
&$12_{1142}$&$3$
&$12_{1143}$&$5$
&$12_{1144}$&$4$\\ \hline
$12_{1145}$&$3$
&$12_{1146}$&$4$
&$12_{1147}$&$4$
&$12_{1148}$&$3$
&$12_{1149}$&$2$
&$12_{1150}$&$5$
&$12_{1151}$&$4$
&$12_{1152}$&$6$\\ \hline
$12_{1153}$&$4$
&$12_{1154}$&$5$
&$12_{1155}$&$6$
&$12_{1156}$&$4$
&$12_{1157}$&$2$
&$12_{1158}$&$3$
&$12_{1159}$&$4$
&$12_{1160}$&$4$\\ \hline
$12_{1161}$&$3$
&$12_{1162}$&$3$
&$12_{1163}$&$4$
&$12_{1164}$&$4$
&$12_{1165}$&$3$
&$12_{1166}$&$3$
&$12_{1167}$&$6$
&$12_{1168}$&$5$\\ \hline
$12_{1169}$&$4$
&$12_{1170}$&$4$
&$12_{1171}$&$3$
&$12_{1172}$&$5$
&$12_{1173}$&$5$
&$12_{1174}$&$4$
&$12_{1175}$&$5$
&$12_{1176}$&$4$\\ \hline
$12_{1177}$&$5$
&$12_{1178}$&$4$
&$12_{1179}$&$3$
&$12_{1180}$&$5$
&$12_{1181}$&$4$
&$12_{1182}$&$5$
&$12_{1183}$&$4$
&$12_{1184}$&$5$\\ \hline
$12_{1185}$&$5$
&$12_{1186}$&$5$
&$12_{1187}$&$6$
&$12_{1188}$&$6$
&$12_{1189}$&$5$
&$12_{1190}$&$5$
&$12_{1191}$&$4$
&$12_{1192}$&$5$\\ \hline
$12_{1193}$&$6$
&$12_{1194}$&$4$
&$12_{1195}$&$5$
&$12_{1196}$&$5$
&$12_{1197}$&$5$
&$12_{1198}$&$5$
&$12_{1199}$&$5$
&$12_{1200}$&$4$\\ \hline
$12_{1201}$&$5$
&$12_{1202}$&$5$
&$12_{1203}$&$4$
&$12_{1204}$&$4$
&$12_{1205}$&$3$
&$12_{1206}$&$6$
&$12_{1207}$&$5$
&$12_{1208}$&$5$\\ \hline
$12_{1209}$&$5$
&$12_{1210}$&$4$
&$12_{1211}$&$6$
&$12_{1212}$&$5$
&$12_{1213}$&$5$
&$12_{1214}$&$2$
&$12_{1215}$&$4$
&$12_{1216}$&$4$\\ \hline
$12_{1217}$&$5$
&$12_{1218}$&$4$
&$12_{1219}$&$4$
&$12_{1220}$&$3$
&$12_{1221}$&$5$
&$12_{1222}$&$5$
&$12_{1223}$&$4$
&$12_{1224}$&$4$\\ \hline
$12_{1225}$&$6$
&$12_{1226}$&$4$
&$12_{1227}$&$5$
&$12_{1228}$&$5$
&$12_{1229}$&$6$
&$12_{1230}$&$5$
&$12_{1231}$&$5$
&$12_{1232}$&$4$\\ \hline
$12_{1233}$&$3$
&$12_{1234}$&$4$
&$12_{1235}$&$4$
&$12_{1236}$&$4$
&$12_{1237}$&$5$
&$12_{1238}$&$4$
&$12_{1239}$&$5$
&$12_{1240}$&$3$\\ \hline
$12_{1241}$&$4$
&$12_{1242}$&$2$
&$12_{1243}$&$3$
&$12_{1244}$&$4$
&$12_{1245}$&$5$
&$12_{1246}$&$4$
&$12_{1247}$&$3$
&$12_{1248}$&$5$\\ \hline
$12_{1249}$&$6$
&$12_{1250}$&$5$
&$12_{1251}$&$6$
&$12_{1252}$&$6$
&$12_{1253}$&$5$
&$12_{1254}$&$4$
&$12_{1255}$&$4$
&$12_{1256}$&$4$\\ \hline
$12_{1257}$&$5$
&$12_{1258}$&$5$
&$12_{1259}$&$4$
&$12_{1260}$&$5$
&$12_{1261}$&$5$
&$12_{1262}$&$4$
&$12_{1263}$&$5$
&$12_{1264}$&$4$\\ \hline
$12_{1265}$&$5$
&$12_{1266}$&$4$
&$12_{1267}$&$4$
&$12_{1268}$&$5$
&$12_{1269}$&$5$
&$12_{1270}$&$6$
&$12_{1271}$&$5$
&$12_{1272}$&$5$\\ \hline
$12_{1273}$&$3$
&$12_{1274}$&$4$
&$12_{1275}$&$5$
&$12_{1276}$&$3$
&$12_{1277}$&$4$
&$12_{1278}$&$2$
&$12_{1279}$&$3$
&$12_{1280}$&$6$\\ \hline
$12_{1281}$&$4$
&$12_{1282}$&$3$
&$12_{1283}$&$3$
&$12_{1284}$&$4$
&$12_{1285}$&$3$
&$12_{1286}$&$2$
&$12_{1287}$&$3$
&$12_{1288}$&$4$\\ \hline
\end{tabular}}

\end{document}